\title[Cyclic operad formality for genus zero moduli spaces]{Cyclic
  operad formality for compactified moduli spaces of genus zero
  surfaces}
\date{22 July, 2010}
\author{Jeffrey Giansiracusa}
\address{
Department of Mathematics, Swansea University \\ 
Singleton Park\\
Swansea \\
SA2 8PP\\ 
United Kingdom}
\email{j.h.giansiracusa@gmail.com}
\author{Paolo Salvatore} 
\address{
Dipartimento di Matematica \\
Universita' di Roma ``Tor Vergata'' \\
Via della Ricerca Scientifica \\
00133 Roma  \\
ITALY}
\email{salvator@mat.uniroma2.it}
\keywords{cyclic operad, framed little discs, moduli of curves, operad
formality, graph complex}
\subjclass[2000]{Primary: 18D50; Secondary: 55P48, 14H15, 81Q30, 81T45} 
\numberwithin{equation}{subsection}
\newtheorem{theorem}{Theorem}[section] 
\newtheorem{lemma}[theorem]{Lemma} 
\newtheorem{proposition}[theorem]{Proposition}
\newtheorem{definition}[theorem]{Definition}
\newtheorem{thmA}{Theorem}
\theoremstyle{remark} 
\newcommand{\id}{\mathrm{id}}
\newcommand{\C}{\mathbb{C}}
\newcommand{\R}{\mathbb{R}}
\newcommand{\N}{\mathbb{N}}
\newcommand{\Q}{\mathbb{Q}}
\newcommand{\I}{\mathscr{I}}
\newcommand{\M}{f\underline{\mathcal{M}}}
\newcommand{\uM}{\underline{\mathcal{M}}}
\def\co{\colon\thinspace} 
\newcommand{\coloneqq}{\mathbin{\mathpalette{\vcenter{\hbox{$:$}}}=}}
\newcommand{\proj}{\mathcal{PG}}
\newcommand{\aff}{\mathcal{AG}}
\newcommand{\kont}{\mathcal{KG}}
\newcommand{\xproj}{\widetilde{\mathcal{PG}}}
\newcommand{\xaff}{\widetilde{\mathcal{AG}}}
\begin{document}
\begin{abstract}
  The framed little 2-discs operad is homotopy equivalent to the
  Kimura-Stasheff-Voronov cyclic operad of moduli spaces of genus zero
  stable curves with tangent rays at the marked points and nodes. We
  show that this cyclic operad is formal, meaning that its chains and
  its homology (the Batalin-Vilkovisky operad) are quasi-isomorphic
  cyclic operads.  To prove this we introduce a new complex of graphs
  in which the differential is a combination of edge deletion and
  contraction, and we show that this complex resolves BV as a cyclic
  operad.
\end{abstract}
\maketitle

\section{Introduction}

We begin by recalling two closely related operads.  The first is the
\emph{little 2-discs} operad of Boardman and Vogt, denoted $D_2$.  The
arity $n$ space, $D_2(n)$, is the space of embeddings of the disjoint union of
$n$ discs into a standard disc, where each disc is embedded by a map
which is a translation composed with a dilation.  At the level of
spaces, group complete algebras over this operad are 2-fold loop
spaces, and at the level of homology an algebra over $H_*(D_2)$ is
precisely a Gerstenhaber algebra.

The second operad is a variant of the little 2-discs; it is the
\emph{framed little 2-discs} operad, denoted $fD_2$, first introduced
by Getzler \cite{Getzler}.  Here each little disc is allowed to be
embedded by a composition of a \emph{rotation} in addition to a
dilation and translation, so in particular, $fD_2(n)=D_2(n) \times
(S^1)^n$.  Getzler observed that algebras over the homology operad
$H_*(fD_2)$ are precisely Batalin-Vilkovisky algebras, and at the
space level the second author and Wahl \cite{SW} proved that a group complete
algebra over $fD_2$ is a 2-fold loop space on a based space with a
circle action.

Tamarkin \cite{Tamarkin} first proved that the operad $D_2$ is
\emph{formal} over $\Q$, meaning that the singular chain operad
$C^{sing}_*(D_2;\Q)$ is quasi-isomorphic to the homology operad
$H_*(D_2;\Q)$.  His proof is algebraic in nature, using braid groups
and Drinfeld associators.  Kontsevich \cite{Ko1, Ko} sketched a
different proof of this formality theorem over $\R$ which generalizes
to show formality of the little $k$-discs for all $k\geq 2$.  Roughly
speaking, Kontsevich works dually, constructing a cooperad of cochain
complexes of graphs which maps to both the cochains and the cohomology
of $D_2$ by quasi-isomorphisms.  The full details of Kontsevich's
proof have been explained nicely by Lambrechts and Volic
\cite{LambVol}.  The Kontsevich proof method has the advantage over
Tamarkin's proof of simultaneously proving the formality in the
Sullivan DGA sense of the individual spaces of the operad.  Recently
Severa and Willwacher \cite{SevWill} have shown that the
quasi-isomorphisms of Kontsevich and Tamarkin are homotopic for a
particular choice of Drinfeld associator.  Tamarkin's proof of
formality for $D_2$ has been adapted to the operad $fD_2$ by Severa
\cite{Severa}, and Kontsevich's proof has been adapted to $fD_2$ by
the present authors in \cite{GS1}.  Both adaptations are relatively
straightforward.

Operad formality theorems have significant applications.  The
formality of $D_2$ plays an important role in Tamarkin's proof
\cite{Tamarkin-deformation} of Kontsevich's celebrated deformation
quantization theorem, and the formality of $D_k$ for $k \geq 4$ is a
primary ingredient in the computation of the homology of spaces of
long knots in high dimensions \cite{LambTurch}.  Formality of the
operad $fD_2$ is used in \cite{Vallette} to construct chain level
homotopy BV-algebra structures for topological conformal field
theories and for the 2-fold loop space on a space with a circle
action, as well as to give a particular solution to the cyclic Deligne
conjecture.

Unlike $D_k$, the operad $fD_k$ has the additional structure of being
homotopy equivalent to a \emph{cyclic operad} (see \cite{Budney} for
an explicit cyclic model); being a cyclic operad means roughly that
the roles of inputs and outputs can be interchanged.  The present
paper was motivated by the question of whether the formality of the
operad can be made compatible with the cyclic structure.

We answer this question in the affirmative when $k=2$.  In section
\ref{cyclic-model} we will recall a cyclic model $\M$ for $fD_2$
constructed in terms of compactified moduli spaces of genus zero
curves with marked points. The elements of $\M$ are genus zero stable
curves decorated with real tangent rays at the marked points and
nodes. The operad composition is obtained by gluing stable curves at
marked points and tensoring the corresponding rays.  This model was
first introduced by Kimura-Stasheff-Voronov \cite{KSV}, and it is
closely related to the modular operad of Deligne-Mumford compactified
moduli spaces of curves whose formality is proved in \cite{Spaniards}.
Moreover $\M$ is homotopy equivalent, as a cyclic operad, to the
operad of genus zero Riemann surfaces with boundary, where the operad
composition is defined by gluing along boundary components.

The main result of this paper is:
\begin{thmA}\label{main}
  The cyclic operad $C^{sing}_*(\M;\R)$ of singular chains on $\M$
  with real coefficients is quasi-isomorphic as a cyclic operad to its
  homology, $H_*(\M;\R)=BV$, which is the Batalin-Vilkovisky operad.
\end{thmA}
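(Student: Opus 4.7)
The plan is to adapt the graph complex strategy of Kontsevich \cite{Ko1, Ko} (made rigorous by Lambrechts--Volic \cite{LambVol}, and extended to the framed setting in our earlier paper \cite{GS1}) to the cyclic setting. The whole argument is organized around a zigzag of quasi-isomorphisms of cyclic operads
\begin{equation*}
C^{sing}_*(\M;\R) \xleftarrow{\;\sim\;} \mathcal{G} \xrightarrow{\;\sim\;} BV,
\end{equation*}
where $\mathcal{G}$ is a new cyclic dg operad of graphs whose differential mixes edge contraction with edge deletion. Once such a zigzag is in hand, Theorem \ref{main} follows immediately.

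First I would construct $\mathcal{G}$. In arity $n+1$ the generators are isomorphism classes of connected graphs with $n+1$ external legs cyclically labeled by a set of size $n+1$, some number of internal vertices (possibly decorated by a loop or framing marker to record the $S^1$-directions inherent in $fD_2$), and edges. The cyclic operad structure is grafting at labeled legs, which manifestly treats all $n+1$ legs on equal footing. The differential $d=d_{\mathrm{cont}}+d_{\mathrm{del}}$ is a signed sum over edges of contraction (merging endpoints, with an appropriate transfer of decorations) and deletion (with suitable reassignment of the framing data to the resulting vertices). Checking $d^2=0$ reduces to a combinatorial sign argument in which the mixed contract--delete terms cancel pairwise.

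Second I would build the geometric map $\mathcal{G}\to C^{sing}_*(\M;\R)$ by semi-algebraic push-forward in the spirit of Kontsevich's configuration space construction. A graph $\Gamma$ determines a semi-algebraic family over a compactified configuration space of internal vertices on a genus zero stable curve, where external legs are pinned at the marked points and edges impose collision/angular constraints; push-forward of the fundamental class gives a singular chain in $\M(n)$. The boundary decomposes into strata where points collide (matched by $d_{\mathrm{cont}}$) and where tangent rays degenerate (matched by $d_{\mathrm{del}}$), so the assignment is a chain map. Cyclic equivariance and compatibility with operadic gluing are built in, since the construction makes no distinction between a root and the other legs.

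Third I would identify $H_*(\mathcal{G})$ with $BV$ as a cyclic operad. Filtering by edge count gives a spectral sequence whose $E^1$-page is the homology of the purely contracting graph complex, which is already known to compute $H_*(fD_2)=BV$ as an ordinary operad by \cite{GS1}. The role of $d_{\mathrm{del}}$ is precisely to repair the cyclic symmetry, which is absent from the usual rooted graph complex. The main obstacle, and the genuine technical heart of the proof, will be showing that $\mathcal{G}$ resolves $BV$ \emph{as a cyclic operad}: this requires verifying that the higher differentials of the edge-filtration spectral sequence vanish and that the cyclic structure induced on the collapse page agrees with the standard cyclic structure on $BV$ coming from $H_*(\M)$. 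By contrast, the construction of $\mathcal{G}$ itself and of the geometric quasi-isomorphism, though intricate, follow a well-understood template.
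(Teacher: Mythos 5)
Your outline follows the same overall strategy as the paper: a manifestly cyclic graph complex whose differential mixes edge contraction with edge deletion, compared against the rooted/framed Kontsevich complex by a filtration spectral sequence, and related to the topology of $\M$ by a configuration-space push-forward. (The paper works dually -- with a cyclic \emph{cooperad} of commutative DGAs of graphs mapping to $H^*(\M)$ and to the PA forms $\Omega^*_{PA}(\M)$ by fibrewise integration, and only dualises to chains at the very end, precisely because the forms are not a cooperad on the nose -- but that is a presentational dual of what you propose, not a different idea.)

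The genuine gap is in your first step, where you assert that the differential $d=d_{\mathrm{cont}}+d_{\mathrm{del}}$ is defined by a signed sum over edges and that ``$d^2=0$ reduces to a combinatorial sign argument in which the mixed contract--delete terms cancel pairwise.'' On an unrooted graph this is exactly where the construction breaks down: contracting an edge between two \emph{internal} vertices requires choosing which endpoint survives (equivalently an orientation of that edge), and with the natural orientation conventions the two choices do not give the same signed graph, so the naive sum is not even well defined, let alone square-zero. The paper's resolution is to impose an extra relation on the graph space -- the ``pinwheel relation'' $\sum_{e\in E(v)}(\gamma\smallsetminus e)\sim 0$ at each internal vertex $v$ -- which is precisely what makes the differential independent of these choices, and which is then used again (for the ``type (5)'' configurations) to get $d^2=0$, again to build the linear isomorphism with the affine complex $\aff_{n-1}$ underlying your spectral-sequence step, and again to show the integration map kills the corresponding relations. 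Nothing in your proposal plays this role, and without it the cancellation you invoke fails. Relatedly, the paper's differential is not a sum of a pure contraction and a pure deletion: each term $\gamma\oslash(e,f)=(\gamma\smallsetminus f)/e$ simultaneously deletes one edge and contracts an adjacent one, which is what makes it homogeneous of degree $+1$ for the grading $|E|-3|V_{int}|$; with separate $d_{\mathrm{cont}}$ and $d_{\mathrm{del}}$ terms and vertex ``framing markers'' you would have to re-engineer the grading and the species of graphs, and it is not clear this can be done consistently.
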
 

The new and important point of this theorem is that the chain of
quasi-isomorphisms is compatible with the \emph{cyclic} operad
structure.  A second point, coming out of the proof, is that this
operad formality simultaneously realises the formality of the
individual spaces of the operad in the Sullivan commutative DGA sense,
as was the case for the Kontsevich proof in the unframed case.  This
cyclic formality has a consequence in computing the cohomology of
diffeomorphism groups of 3-dimensional handlebodies, as discussed
further below.

It is not immediately clear that the previous operad formality proofs
given in \cite{GS1} and \cite{Severa} are compatible with the cyclic
structure, nor is it clear that they are compatible with the formality
of the individual spaces of the operad.  However, since the completion
of this work Severa has indicated how to show that his formality proof
is in fact compatible with the cyclic structure.  As for the other
proof, the Kontsevich graph complex, as modified for the framed
2-discs operad in \cite{GS1}, in arity $n$ has an action of the
enlarged symmetric group $\Sigma_{n+1}$ (this action would be part of
the cyclic operad structure) by linear automorphisms, but it is not
clear if this action respects the differential.  It seems likely that
the action is in fact compatible, but proving this appears to be a
very difficult combinatorial problem.  

Instead, here we take a more conceptual approach and build a new
manifestly cyclic operad of ``projective graph complexes'' as a
replacement for Kontsevich's graph complexes.  The differential is now
a combination of both edge deletions and edge contractions.  The
projective graph complexes do form a cyclic cooperad.  Our proof of
Theorem \ref{main} then follows in outline the method used by
Kontsevich in the unframed case.  The projective graph complexes map
onto the cohomology of $\M$, and they also map to the (semi-algebraic)
forms on $\M$ by a variation of Kontsevich's map defined by certain
configuration space integrals.  We prove that both of these maps are
quasi-isomorphisms and are compatible with the cyclic cooperad
structures.  The theorem then follows by dualising.

For $k>2$ the question of formality of $fD_k$ as a cyclic operad, or
even just as an ordinary operad, remains open.  In future work we
hope to address this.

\subsection{Application to 3-dimensional handlebodies}
In the forthcoming paper \cite{handlebodies}, the first author proves
that the modular operad generated by the cyclic operad $\M$ is
homotopy equivalent to the modular operad made from $B\mathrm{Diff}$s of
handlebodies.  This leads to a Bousfield-Kan spectral sequence computing the
cohomology of $B\mathrm{Diff}(H_g)$ for $H_g$ a handlebody of genus
$g$.  Theorem \ref{main} implies that this spectral sequence
degenerates at the $E^2$ page.  (Note that formality as an operad rather than a
cyclic operad is not enough to imply degeneration of the spectral sequence.)


\section{The framed 2-discs as a cyclic operad}\label{cyclic-model}

The framed little 2-discs operad $fD_2$ is homotopy equivalent to a
cyclic operad, (although it is not itself cyclic on the nose).  A
convenient cyclic model was introduced in \cite[\S 3.4]{KSV}.  We
recall that model and discuss some of its properties.

\subsection{Cyclic operads and cooperads}\label{cyclic-operad-definitions}
Recall that a \emph{cyclic operad} is an extension of the concept of 
operad in which inputs and outputs are on equal footing; this can be
formalised by asking that the $\Sigma_n$ action on the arity $n$ space
extends to an action of $\Sigma_{n+1}$ in a way compatible with the
operad composition maps.  The notion of a cyclic operad was first
introduced by Getzler and Kapranov \cite{GetzKapr}.

In more detail, a cyclic operad in a symmetric monoidal category
$\mathcal{C}$ is defined by the following data: a functor $P$ from the
category of non-empty finite sets and bijections to $\mathcal{C}$, and
for each pair of finite sets $I,J$ with elements $i\in I, \, j \in J$
a composition morphism,
\[
_i\circ_j \colon P(I) \otimes P(J) \to P(I\sqcup J \smallsetminus \{i,j\}),
\]
natural in $I$ and $J$ and satisfying the following axioms.
\begin{itemize}
\item \textbf{(Associativity)}
Given finite sets $I,J,K$, and elements $i \in I, \, j_1,j_2 \in J, \,
k \in K$, the following diagram commutes:
\[
\begin{diagram}
\node{P(I) \otimes P(J) \otimes P(K)} 
\arrow{e,t}{\id \otimes ({_{j_2}\circ_k})}
\arrow{s,r}{({_i\circ_{j_1}}) \otimes \id} 
\node{P(I) \otimes P(J\sqcup K \smallsetminus \{j_2,k\})} \arrow{s,r}{_i\circ_{j_1}}\\
\node{P(I\sqcup J \smallsetminus \{i,j_1\})\otimes P(K)} 
\arrow{r,t}{{_{j_2}\circ_k}} 
\node{P(I\sqcup J\sqcup K \smallsetminus \{i,j_1,j_2,k\}).}
\end{diagram}
\]

\item \textbf{(Commutativity)} Given finite sets $I,J$ and elements
$i \in I$ and $ j \in J$, the following diagram commutes:
\[
\begin{diagram}
\node{P(I)\otimes P(J)} \arrow{e,t}{_i\circ_j} \arrow{s,l}{\mathrm{swap}}
\node{P(I\sqcup J \smallsetminus \{i,j\})} \arrow{s,r}{P(\tau)}\\
\node{P(J)\otimes P(I)} \arrow{e,t}{_j\circ_i}
\node{P(J\sqcup I \smallsetminus \{j,i\}),}
\end{diagram}
\]
where $\tau$ is the canonical bijection $I\sqcup J \smallsetminus
\{i,j\} \cong J\sqcup I \smallsetminus \{j,i\}$.

\item \textbf{(Unit axiom)} For each set $A=\{a,b\}$ of cardinality 2
  there is a morphism $u_A\colon 1_{\mathcal{C}} \to P(A)$, where
  $1_{\mathcal{C}}$ is the monoidal unit, that is natural in $A$ and
  such that for any finite set $I$ and an element $i \in I$ the
  following diagram commutes:
\[
\begin{diagram}
\node{1_{\mathcal{C}}\otimes P(I)} \arrow{e,t}{u_A \otimes \id}
 \arrow{se,b}{P(\phi)} \node{P(A)\otimes P(I)} \arrow{s,r}{_a\circ_i}
 \\
\node[2]{P(A\sqcup I \smallsetminus \{a,i\}),}
\end{diagram}
\]
where $\phi\colon I \cong A\sqcup I \smallsetminus \{a,i\}$ is the
canonical bijection sending $i$ to $b$.
\end{itemize}
  
Dually one defines a \emph{cyclic cooperad} by reversing the arrows in
the definition.  Concretely, a cyclic cooperad is specified by the
data of a functor $P$ from finite sets and bijections to
$\mathcal{C}$, and a co-composition map
\[
_K\bullet_L \colon P(V) \to P(K\sqcup \{x\}) \otimes P(L\sqcup \{y\}),
\]
(where $x$ and $y$ are auxiliary points)
for each partition $V=K\sqcup L$.  These co-composition maps must
satisfy dual versions of the naturality, symmetry, and associativity
conditions.  

A cyclic (co)operad in the category of chain complexes is called a
{\em differential graded cyclic (co)operad}.

\subsection{The compactified genus zero moduli space model} 
We now recall the moduli space operad introduced by Kimura, Stasheff
and Voronov in \cite[\S 3.4]{KSV}.  In their notation, the operad is
denoted $\underline{\mathcal{N}}$, but we prefer $\M$ as being more
consistent with other notations.  This operad is manifestly a cyclic
operad, and we will show in Proposition \ref{operad-equivalence} that
its underlying operad is homotopy equivalent to the framed little
2-discs operad.

Given a non-empty finite set $V$, let $\mathcal{M}_V$ denote the moduli
space of genus zero Riemann surfaces with marked points labelled by
$V$.  Explicitly,
\[
\mathcal{M}_V \coloneqq \mathrm{Inj}(V,\mathbb{CP}^1)/PGL_2(\C),
\]
where $\mathrm{Inj}(V,\mathbb{CP}^1)$ is the space of injective maps
from $V$ to the complex projective line, and the automorphism group $PGL_2(\C)$ acts on the
target in the standard way.  We will write $\mathcal{M}_n$ when $V$ is
the set $\{0, \ldots, n-1\}$. 

There is an identification between $\mathcal{M}_n$ and the complement
in $(\mathbb{CP}^1)^{n-3}$ of the union of the hyperplanes of the form
\begin{equation}\label{hyp} 
z_i=z_j, \,z_i=0, \, z_i=1, \, z_i=\infty \, .
\end{equation}

If $n > 3$ then the moduli space $\mathcal{M}_n$
is noncompact and has a Deligne-Mumford-Knudsen compactification
\[
\mathcal{M}_n \subset \overline{\mathcal{M}}_n
\]
which can be constructed blowing up in a suitable order all proper
intersections of hyperplanes of the form \eqref{hyp} in
$(\mathbb{CP}^1)^{n-3}$ \cite{keel}.  The points of
$\overline{\mathcal{M}}_n$ correspond to stable nodal Riemann
surfaces, (\emph{nodal} means that the surfaces are allowed to have
double-point singularities, and \emph{stable} in this case means that
no irreducible component is a sphere with strictly fewer than 3 points
that are either nodes or marked points).  For $i =0,\cdots (n-1)$ there is a
complex line bundle $\mathcal{L}_i$ over $\overline{\mathcal{M}}_n$
whose fibre over a given Riemann surface $S$ is the tangent space at
the $i^{th}$ marked point.

The boundary, $\overline{\mathcal{M}}_V \smallsetminus \mathcal{M}_V$,
is a normal crossing divisor.  It has an irreducible component for
each unordered partition $V=K \sqcup L$ into two subsets of cardinality at least
2, and the corresponding component is canonically isomorphic to
$\overline{\mathcal{M}}_{K \sqcup \{x\}} \times
\overline{\mathcal{M}}_{L \sqcup \{y\}}$. The inclusions
\[
\overline{\mathcal{M}}_{K \sqcup \{x\}} \times
\overline{\mathcal{M}}_{L \sqcup \{y\}} \to \overline{\mathcal{M}}_V
\]
define a cyclic operad structure on the Deligne-Mumford-Knudsen moduli
spaces.

The space $\uM_V$ is defined as the real oriented blowup of the
boundary locus of $\overline{\mathcal{M}}_V$.  We pull the complex
line bundles $\mathcal{L}_x$ back from $\overline{\mathcal{M}}_V$ (and
denote them by the same symbol).  The space $\uM_V$ can be interpreted
as the moduli space of genus zero stable nodal Riemann surfaces with
points marked by $V$ and equipped with the additional data of a ray in
the tensor product of the two tangent spaces at either side of each
node.  It can be constructed as the iterated \emph{real oriented}
blowup of $(\mathbb{CP}^1)^{n-3}$ along {\em all} intersections of
hyperplanes \eqref{hyp}, by blowing up first the proper intersections
in the same order as for the Deligne-Mumford space, and finally the
hyperplanes themselves.  The space $\overline{\mathcal{M}}_V$ is a
compact smooth manifold with \emph{faces}; i.e. it is a manifold with
corners, stratified with the interior as the unique codimension 0
stratum and all of the positive codimension strata lying on the
boundary, each of which is a transversal intersection of faces
(codimension 1 strata).  Faces correspond to unordered partitions of
$V$ into two subsets $K \sqcup L$ of cardinality at least 2; given
such a partition, the corresponding face is canonically isomorphic to
the unit circle bundle of the exterior tensor product $\mathcal{L}_x
\otimes \mathcal{L}_y$ over $\uM_{K \sqcup \{x\}} \times \uM_{L \sqcup
  \{y\}}.$

Let $f\mathcal{M}_V$ denote the (coarse) moduli space of genus zero
smooth surfaces equipped with points marked by $V$ with real tangent
rays.  Explicitly,
\[
f\mathcal{M}_V = \{ g\colon V \to S(T\mathbb{CP}^1) \quad | \quad \pi \circ g \in
\mathrm{Inj}(V,\mathbb{CP}^1)\}/PGL_2(\C),
\]
where $\pi\colon S(T\mathbb{CP}^1) \to \mathbb{CP}^1$ is the circle
bundle associated with the tangent bundle of $\mathbb{CP}^1$.  Observe
that there are diffeomorphisms
\begin{align*}
f\mathcal{M}_1 & \cong *,  \\
f\mathcal{M}_2 & \cong S^1 \mbox{ (canonical), and} \\
f\mathcal{M}_3 & \cong (S^1)^3 \mbox{ (non-canonical).}
\end{align*}
In particular, $f\mathcal{M}_n$ is compact for $n \leq 3$.

When $n > 3$, $f\mathcal{M}_n$ is noncompact.  Forgetting the
tangent rays at the marked points gives a trivial
principal $(S^1)^n$-bundle $f\mathcal{M}_n \to \mathcal{M}_n$ which
extends uniquely to a trivial torus bundle
over $\underline{\mathcal{M}}_n$.  We call its total space 
$\M_n \cong (S^1)^n \times \underline{\mathcal{M}}_n$.
Note that the trivialisation of the bundle is not canonical.
More explicitly,
\[
\M_n \coloneqq S(\mathcal{L}_0) \times_{\uM_n} \cdots \times_{\uM_n} S(\mathcal{L}_{n-1}) \, .
\]
One sees that $\M_n$ is a compactification of $f\mathcal{M}_n$
obtained by adding boundary and corners, and hence it is homotopy
equivalent to its interior.  When $n \leq 3$ we simply define $\M_n
\coloneqq f\mathcal{M}_n$.

The functor $V \mapsto \M_V$ forms a cyclic operad.  Composition with
$\M_2 \cong S^1$ acts by rotating tangent rays, and composition with
$\M_1\cong \{*\}$ removes a marked point.  All other composition maps
are defined by gluing marked points together to form a node and
tensoring their tangent rays to produce the decoration at the node; in
more detail, if $K$ and $L$ both have cardinality at least 2 then the
composition map
\[
_x\circ_y\colon \M_{K \sqcup \{x\}} \times \M_{L \sqcup \{y\}} \to \M_V
\]
is a trivial circle bundle over the face associated with the
partition $V= K\sqcup L$ and it is induced by the multiplication map
$S(\mathcal{L}_x) \times S(\mathcal{L}_y) \to S(\mathcal{L}_x\otimes
\mathcal{L}_y)$.

Recall that a homotopy equivalence of topological operads is a
morphism of operads (a natural transformation commuting with the
composition and unit maps) that is level-wise an equivariant homotopy
equivalence.
\begin{proposition}\label{operad-equivalence}
There is homotopy equivalences of operads
\[
fD_2 \simeq \M \, .
\]
There is also a homotopy equivalence of spaces, $D_2(n) \simeq
\M_{n+1} / (S^1)^n$, where the torus acts by rotating all but the
zeroth tangent direction.
\end{proposition}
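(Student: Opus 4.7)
The plan is to establish the operad homotopy equivalence $fD_2 \simeq \M$ by passing through an intermediate operad $\mathcal{B}$ of genus zero Riemann surfaces with parameterised boundary circles, and then obtain the second statement by quotienting by rotations. For each $n$, $\mathcal{B}(n)$ will parameterise smooth genus zero surfaces with $n+1$ disjoint parameterised boundary circles labelled $0, 1, \ldots, n$; operad composition is by gluing the boundary circle labelled $0$ of one surface to the boundary circle labelled $i$ of another via the parameterisations. The cyclic structure on $\mathcal{B}$ is manifest because input and output circles are on equal footing.

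First I would show $fD_2 \simeq \mathcal{B}$ as operads, via the map sending framed little discs $(d_1, \ldots, d_n)$ in the unit disc to the pair-of-pants-like surface obtained by removing the open interiors of the $d_i$. The outer boundary receives the standard parameterisation and each inner boundary receives the parameterisation whose base point is determined by the framing $\theta_i$ of $d_i$. Composition of framed discs (insertion) corresponds to gluing boundary circles, so this is an operad morphism, and it is a level-wise equivariant homotopy equivalence because the space of Riemann surface structures on the complement deformation retracts onto the standard one.

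Second, I would show $\mathcal{B} \simeq \M$ as cyclic operads. The map collapses each boundary circle to a marked point and records at it the tangent ray determined by the base point of the parameterisation. Gluing two circles in $\mathcal{B}$ corresponds to forming a node in $\M$ whose tangent ray is the tensor product of the two collapsed tangent rays, matching the composition rule ${_x \circ_y}$ of $\M$ induced by $S(\mathcal{L}_x) \times S(\mathcal{L}_y) \to S(\mathcal{L}_x \otimes \mathcal{L}_y)$. For the second statement of the proposition, the torus $(S^1)^n$ acts on $fD_2(n) = D_2(n) \times (S^1)^n$ by rotating the framings on the input discs with quotient $D_2(n)$; under the equivalence this action matches the action on $\M_{n+1}$ rotating the $n$ non-zeroth tangent rays, yielding $D_2(n) \simeq \M_{n+1}/(S^1)^n$.

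The main obstacle is the second step, specifically verifying that the collapsing map extends to a homotopy equivalence on the compactifications. Surfaces in $\mathcal{B}$ in which a separating simple closed curve is pinched correspond to stable nodal curves in $\M$, and one must check that the natural pinching compactification of $\mathcal{B}$ matches the real oriented blowup construction of $\M$ recalled above, including the identification of the tangent ray data at nodes as limits of boundary parameterisations. This requires a careful analysis of the boundary stratification of $\M_{n+1}$ (its structure as a tower of circle bundles coming from the real oriented blowup of the Deligne-Mumford-Knudsen compactification) against the natural boundary structure of pinched parameterised surfaces.
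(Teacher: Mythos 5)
Your overall strategy (zigzag through an intermediate surface-type operad) is reasonable in spirit, but the second step contains a genuine gap that is more serious than the ``careful analysis'' you flag at the end. The collapsing map $\mathcal{B}\to\M$ you describe cannot be a morphism of operads, even before any compactification issues arise: gluing two smooth surfaces with parameterised boundary along a pair of boundary circles produces another \emph{smooth} surface, which under your collapsing map lands in the interior $f\mathcal{M}_V\subset\M_V$; but the composition ${_x\circ_y}\colon \M_{K\sqcup\{x\}}\times\M_{L\sqcup\{y\}}\to\M_V$ of the Kimura--Stasheff--Voronov operad has image in a codimension-one \emph{boundary face} of the manifold with corners $\M_V$ (the locus of nodal curves with a node separating $K$ from $L$). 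So the square comparing the two compositions does not commute, and no amount of matching boundary stratifications of a compactified $\mathcal{B}$ against the real oriented blowup fixes this; you would have to replace $\mathcal{B}$ by a model in which composition produces a pinched (nodal) surface, at which point your step one ($fD_2\simeq\mathcal{B}$ via removing disc interiors, where composition is honest insertion) no longer matches up, and you are back to needing a further zigzag. There are also unaddressed point-set issues in step two: a boundary parameterisation carries far more data than a tangent ray, so the collapse map has large (one hopes contractible) fibres and is at best a homotopy equivalence requiring its own argument.

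The paper's proof is built precisely to sidestep this mismatch. It uses the Fulton--MacPherson operad $FM_2$, whose spaces are real oriented blowup compactifications and whose composition maps are \emph{inclusions of boundary faces} --- structurally parallel to $\M$. The known $SO(2)$-equivariant zigzag $D_2\simeq FM_2$ handles the comparison with honest disc insertion, passing to semidirect products gives $fD_2\simeq fFM_2$, and then $fFM_2\cong\M$ is an actual isomorphism of operads obtained by identifying the two iterated real oriented blowup constructions ($FM_2(n)\cong S^1\times\uM_{n+1}$, hence $fFM_2(n)\cong\M_{n+1}$). The second statement of the proposition then falls out by quotienting by the torus, as you observe. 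If you want to salvage your approach you essentially need to rediscover an $FM$-type model; I would recommend working through the paper's identification of the blowups instead.
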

\begin{proof}
  In \cite{Barcelona} the second author constructed a zigzag of
  homotopy equivalences between the operad $D_2$ and the
  \emph{Fulton-MacPherson operad}, $FM_2$, introduced by Getzler and
  Jones in \cite{GJ}.  The space $FM_2(n)$ is the real oriented
  Fulton-MacPherson compactification of the space of configurations of
  $n$ points in $\R^2$ modulo dilation and translation.  It can be
  constructed by iterated real oriented blowup from the
  $(2n-3)$-sphere $S^{2n-3} \cong (\C^n \smallsetminus
  \Delta(\C))/Aff_2$, where $\Delta(\C)$ is the thin diagonal defined
  by $z_1 = \dots =z_n$ and $\mathit{Aff}_2 \coloneqq \R_+ \ltimes \C$
  is the affine group generated by positive dilations and
  translations. The blowups are performed along all diagonals. The
  space $FM_2(n)$ is a manifold with faces; a face corresponds to an
  unordered partition of $\{0,1,\dots,n\}=K \sqcup L$ into two subsets
  of cardinality at least 2, and it is canonically diffeomorphic to
  $FM_2(|K|) \times FM_2(|L|)$. The inclusion of a face with
  $L=\{i,\dots,i+l-1\}$ represents a (non-cyclic) $\circ_i$ operad
  composition.

  From \cite{SW}, $D_2$ is an operad in the category of spaces with
  $SO(2)$ action and so one can form the semidirect product operad
  $D_2 \ltimes SO(2)$ which is isomorphic to $fD_2$.  Similarly,
  $FM_2$ is an operad in $SO(2)$-spaces, and one can form the
  \emph{framed Fulton-MacPherson operad} $fFM_2 \coloneqq FM_2 \ltimes
  SO(2)$.  The homotopy equivalence between $D_2$ and $FM_2$ is
  realised by a zigzag of $SO(2)$-equivariant maps and thus induces a
  homotopy equivalence of operads,
  \[
  fD_2 \simeq fFM_2.
  \]

  It turns out that $fFM_2$ and $\M$ are isomorphic as operads.  This
  is immediate in arity $n<3$ In higher arity, note that blowing up
  the sphere $(\C^n \smallsetminus \Delta(\C))/Aff_2$ at the diagonal
  $z_{n-1}=z_n$ is the same as blowing up $S^1 \times
  (\mathbb{CP}^1)^{n-3}$ at the hypersurfaces $z_i=\infty, \, 1 \leq i
  \leq n-3$.  Blowing up all other diagonals identifies $FM_2(n) \cong
  S^1 \times \uM_{n+1}$, and thus we have diffeomorphisms $fFM_2(n)
  \cong \M_{n+1}$ compatible with the operad composition, by
  definition.
\end{proof}

\subsection{Semi-algebraic structure and canonical
projections}\label{canonical-projections}
For the proof of Theorem \ref{main}, we will need to work in the category
of semi-algebraic spaces.  The theory of semi-algebraic spaces and its
corresponding de Rham theory has been developed by Hardt, Lambrechts,
Turchin and Volic in \cite{HLTV}.

\begin{proposition}
The functor $V \mapsto \M_V$ forms a cyclic operad in the
category of semi-algebraic manifolds.
\end{proposition}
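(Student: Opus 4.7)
The plan is to verify two things: that each space $\M_V$ carries a natural semi-algebraic structure, and that each piece of cyclic operad structure (composition, the $\Sigma_{n+1}$-action, and the unit maps) is semi-algebraic with respect to these structures.

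For the underlying spaces, I would start from the Deligne-Mumford-Knudsen compactification $\overline{\mathcal{M}}_V$, which is a smooth complex projective variety, hence a compact real algebraic variety, and in particular is semi-algebraic. The tangent line bundles $\mathcal{L}_i$ are algebraic line bundles, so their unit circle bundles $S(\mathcal{L}_i)$ are semi-algebraic. The real oriented blowup of a semi-algebraic manifold along a smooth semi-algebraic submanifold is again semi-algebraic; on local real-algebraic coordinates it is given by explicit spherical blow-up formulas, as recorded in \cite{HLTV}. Since the boundary strata of $\overline{\mathcal{M}}_V$ form a normal crossing divisor whose irreducible components are algebraic subvarieties indexed by partitions of $V$, the iterated real oriented blowup along these strata produces $\uM_V$ as a semi-algebraic manifold with faces. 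The space $\M_V$ is then the iterated fibre product of the semi-algebraic circle bundles $S(\mathcal{L}_i)$ over $\uM_V$, and fibre products of semi-algebraic manifolds are semi-algebraic.

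For the structure maps, the cases $|V|\leq 3$ (where $\M_V$ is a point, a circle, or a 3-torus with the standard real algebraic structure) are immediate, and the symmetric group actions, being induced by algebraic permutations of the marked points on $\overline{\mathcal{M}}_V$, lift to semi-algebraic actions on $\uM_V$ and on $\M_V$. The unit maps $u_A \colon 1_{\mathcal{C}} \to \M_A$ for $|A|=2$ pick out a single point of the circle $\M_A \cong S^1$, hence are trivially semi-algebraic. For the composition $_x\circ_y$, the inclusion of the face associated with a partition $V = K \sqcup L$ is obtained by lifting the algebraic closed immersion
\[
\overline{\mathcal{M}}_{K \sqcup \{x\}} \times \overline{\mathcal{M}}_{L \sqcup \{y\}} \hookrightarrow \overline{\mathcal{M}}_V
\]
through the real oriented blowups, hence is semi-algebraic. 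The composition map itself is a circle bundle over this face, induced fibrewise by the bilinear multiplication $S(\mathcal{L}_x) \times S(\mathcal{L}_y) \to S(\mathcal{L}_x \otimes \mathcal{L}_y)$, which in any algebraic local trivialisation is given by polynomial formulas and is therefore semi-algebraic.

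The main obstacle is the careful bookkeeping of the iterated real oriented blowup: one must check inductively that at each stage the centre of the next blowup is a smooth semi-algebraic submanifold of the semi-algebraic manifold obtained so far. This is handled by following the order prescribed by Keel's construction of $\overline{\mathcal{M}}_V$ and applying the real oriented blowup formalism of \cite{HLTV} at each step. Once this is established, the cyclic operad axioms in the semi-algebraic category are inherited from those already verified at the level of topological manifolds in the previous subsection, since all the relevant maps are semi-algebraic refinements of the continuous maps already considered.
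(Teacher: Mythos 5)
Your route is genuinely different from the paper's. The paper never analyses the blowup construction semi-algebraically at all: it transports the problem to the Fulton--MacPherson model via the identification $\M_n \cong fFM_2(n-1)$, invokes Sinha's explicit $\Sigma_n$-equivariant semi-algebraic embedding $FM_2(n) \hookrightarrow (S^1)^{\binom{n}{2}} \times [0,\infty]^{\binom{n}{3}}$ and Lambrechts--Volic's check that $\circ_i$ is semi-algebraic, passes to the semidirect product with $SO(2)$, and then handles the extra cyclic symmetry by a second explicit equivariant embedding $\M_n \hookrightarrow (S^1)^{\binom{n}{2}} \times (\mathbb{CP}^1)^{\binom{n}{4}}$ built from angles between rays and cross-ratios. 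All the required semi-algebraicity is then manifest, because every structure map is exhibited as (the restriction of) a map given by explicit semi-algebraic formulas in these coordinates.

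By contrast, the load-bearing step of your argument --- that the iterated real oriented blowup of $\overline{\mathcal{M}}_V$ along its boundary strata is a semi-algebraic manifold, in a way functorial enough that the $\Sigma_{V}$-action and the face inclusions lift semi-algebraically --- is asserted rather than proved, and the citation is wrong: \cite{HLTV} develops PA forms, semi-algebraic chains and bundles, but contains no treatment of real oriented blowups or "spherical blow-up formulas." A real oriented blowup is not handed to you as a subset of an ambient semi-algebraic set; to make it semi-algebraic one must exhibit it, e.g., as the closure of the graph of a semi-algebraic "unit normal direction" map, and then check that each successive centre in Keel's order remains a smooth semi-algebraic submanifold of the partially blown-up space and that the relevant automorphisms and immersions extend over the exceptional loci. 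You correctly identify this as "the main obstacle" but then defer it to an induction you do not carry out. This is exactly the work the proposition is asking for, so as written the proof has a gap; it could likely be closed, but doing so would be more laborious than the paper's embedding argument, which sidesteps the blowup bookkeeping entirely.
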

\begin{proof}
  Sinha constructs in \cite{Sinha} a
  $\Sigma_n$-equivariant semi-algebraic embedding $j_n\colon FM_2(n)
  \hookrightarrow (S^1)^{\binom{n}{2}} \times
  [0,\infty]^{\binom{n}{3}}$ by evaluating the direction between each
  pair of points and ratio of distances for each triple of points.
  Thus the symmetric group acts semi-algebraically. Moreover,
  Lambrechts and Volic check in \cite[Prop 5.5]{LambVol} that the
  operad composition $\circ_i \colon FM_2(n) \times FM_2(m) \to
  FM_2(m+n-1)$ is the restriction of a semi-algebraic map.  Passing to
  the semi-direct product with $SO(2)$, it follows that $fFM_2$ is an
  operad in semi-algebraic manifolds.  Finally, we must verify that
  the action of $\Sigma_n$ on $\M_n \cong fFM_2(n-1)$ is
  semi-algebraic.  For this it is convenient instead to
  $\Sigma_n$-equivariantly embed $\M_n \hookrightarrow
  (S^1)^{\binom{n}{2}} \times (\mathbb{CP}^1)^{\binom{n}{4}}$ by
  evaluating angles between rays for pair of points and the cross
  ratio for 4-tuples of points.  This shows that $\Sigma_{n}$ acts
  semi-algebraically on $\M_n$.
\end{proof}

Forgetting subsets of the marked points defines a collection of
important projections between the various moduli spaces.  Given a
subset $A\subset V$, let
\[
\pi^A \colon \M_V \to \M_{V\smallsetminus A}
\]
be the map induced by forgetting all marked points labelled by $A$.
Note that $\pi^A$ is the iterated operad composition with the point of
$\M_1$ for each element of $A$.  

\begin{proposition}
The map $\pi^{A} \colon \M_{V} \to \M_{V\smallsetminus A}$, forgetting the points in
$A$, is a semi-algebraic fibre bundle.
\end{proposition}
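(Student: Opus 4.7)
The plan is to reduce to the case $|A|=1$ and then transport the problem to the framed Fulton--MacPherson compactification, where forgetful maps are known to be semi-algebraic fiber bundles.

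To begin, observe that $\pi^A$ is semi-algebraic, since as noted just before the statement it is an iterated operadic composition with the unique point of $\M_1$ and the cyclic operad structure on $\M$ is semi-algebraic by the preceding proposition. I would then factor $\pi^A$ as a tower of single-point forgetful maps by picking an ordering $A = \{a_1, \ldots, a_k\}$ and writing
\[
\M_V \to \M_{V \smallsetminus \{a_k\}} \to \cdots \to \M_{V \smallsetminus A}.
\]
If each arrow is a locally trivial semi-algebraic fiber bundle with compact semi-algebraic model fiber of a fixed topological type, then local trivializations for $\pi^A$ can be assembled level by level over any semi-algebraic chart in the base, yielding a global semi-algebraic bundle structure.

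The heart of the argument is the single-point case $\pi^{\{a\}} \colon \M_W \to \M_{W \smallsetminus \{a\}}$. I would pick a basepoint $0 \in W \smallsetminus \{a\}$ and invoke the operad diffeomorphism $\M_W \cong fFM_2(|W|-1)$ from the proof of Proposition~\ref{operad-equivalence}. Under this identification, $\pi^{\{a\}}$ splits as the forgetful map $FM_2(n) \to FM_2(n-1)$ on underlying configurations (dropping the point labelled by $a$), together with the projection on framing tori that drops the corresponding $S^1$ factor. The latter is trivially a semi-algebraic fiber bundle, while the former is the well-known statement that the Fulton--MacPherson forgetful map is a semi-algebraic fiber bundle, obtainable from Sinha's semi-algebraic embedding (see e.g.\ Lambrechts--Volic~\cite{LambVol}); taking semidirect product with $SO(2)$ preserves this structure. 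If $A$ happens to contain the distinguished basepoint of the identification, I would exploit the $\Sigma_W$-equivariance of $\M$ verified in the preceding proposition to relabel and reduce to the case above.

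The main obstacle I anticipate is verifying that the iterated elementary bundles assemble into a genuine semi-algebraic fiber bundle for $\pi^A$, rather than merely a stratified semi-algebraic submersion. The delicate point is the behaviour of fibers over the boundary strata of $\M_{V \smallsetminus A}$, where the underlying nodal curves have varying topological types; the real oriented blowup defining $\M$ replaces each node by a circle, so that the compactified fiber becomes a semi-algebraic genus zero surface with $|V|$ boundary circles, and one must check that this topological type is locally constant over $\M_{V \smallsetminus A}$. This local constancy should follow from the description of the boundary faces of $\M$ recalled in Section~\ref{cyclic-model} as trivial circle bundles over products of lower-arity moduli spaces, combined with the semi-algebraic trivializations inherited from the Fulton--MacPherson identification.
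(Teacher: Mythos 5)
Your proposal follows essentially the same route as the paper: identify $\M_V$ with the framed Fulton--MacPherson space $(S^1)^{|V|-1}\times FM_2(|V|-1)$, handle the torus factor as a trivial bundle, and reduce the substance to the known semi-algebraic bundle structure of Fulton--MacPherson forgetful maps from Lambrechts--Volic. The one real divergence is your reduction to forgetting one point at a time, and this is where your argument has a soft spot: the claim that local trivializations of a tower of bundles ``can be assembled level by level over any semi-algebraic chart in the base'' is not automatic --- a trivialization of the top map lives over open sets of the \emph{intermediate} space, and converting it into one over a chart of the base is precisely the nontrivial assertion that a composite of (semi-algebraic) bundle projections is again a bundle projection. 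The paper avoids this entirely by citing \cite[Appendix A]{LambVol} for the forgetful map $FM_2(|V|-1)\to FM_2(|V\smallsetminus A|-1)$ that drops \emph{all} of $A$ at once, and then invoking \cite[Prop.~8.5]{HLTV} for the single remaining composition with the trivial torus projection. Your version is repairable by the same citation of \cite[Prop.~8.5]{HLTV} applied iteratively, and with that in hand your closing worries about the topological type of fibres over boundary strata become unnecessary --- that local constancy is already packaged into the Lambrechts--Volic result, so you need not re-derive it from the face structure of $\M$.
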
 
\begin{proof}
  The projection $\pi^{A}$ is equivalent to the composition
  $(S^1)^{|V|-1} \times FM_2(|V|-1) \to (S^1)^{|V|-1} \times
  FM_2(|V\smallsetminus A|-1) \to (S^1)^{|V \smallsetminus A|-1} \times
  FM_2(|V \smallsetminus A|-1) $.  The first map is a semi-algebraic
  bundle by \cite[Appendix A]{LambVol}, the second map is a trivial
  bundle, and the composite of bundle projections is a bundle
  projection by \cite[Prop. 8.5]{HLTV}.
\end{proof}

The fibres of the bundle $\pi^{A}$ are manifolds with faces, so
the fibrewise boundary is stratified into pieces of codimension 1 (in
the total space) and greater.  We shall refer to the codimension 1
strata as the \emph{fibrewise boundary components}.
\begin{lemma}\label{fibrewise-boundary}
  For $|V\smallsetminus A| \geq 3$, the fibrewise boundary components
  of the bundle
  \[\pi^{A} \colon \M_{V} \to \M_{V\smallsetminus A}\] correspond bijectively with the
  set of unordered partitions of $V$ into two subsets $U_1,
  U_2$ of cardinality at least 2 such that $|U_1 \smallsetminus A| \leq 1$.
\end{lemma}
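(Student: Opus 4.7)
The idea is to use the explicit description of the face stratification of $\M_V$ recalled in Section \ref{cyclic-model}: faces of $\M_V$ correspond to unordered partitions $V = U_1 \sqcup U_2$ with $|U_1|, |U_2| \geq 2$, and the face $F_{U_1,U_2}$ is the unit circle bundle of $\mathcal{L}_x \otimes \mathcal{L}_y$ over $\uM_{U_1 \sqcup \{x\}} \times \uM_{U_2 \sqcup \{y\}}$. For each such partition I would identify the image $\pi^A(F_{U_1,U_2}) \subset \M_{V \smallsetminus A}$ and decide whether it lies on the boundary of $\M_{V \smallsetminus A}$ (in which case the face is horizontal) or in the open stratum (in which case it is fibrewise). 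Since $\pi^A$ is the iterated operad composition with the unit $\M_1 \cong \ast$, on the factor $\M_{U_i \sqcup \{x\}}$ the restriction of $\pi^A$ is just the corresponding forgetful map dropping the marked points of $A \cap U_i$.

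\textbf{Case split.} The inequality $|U_1 \smallsetminus A| + |U_2 \smallsetminus A| = |V \smallsetminus A| \geq 3$ forces at most one of the two sides of the partition to satisfy $|U_i \smallsetminus A| \leq 1$, so the labelling in the statement is unambiguous.

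\emph{Case 1: $|U_1 \smallsetminus A|,|U_2 \smallsetminus A| \geq 2$.} Both factors remain stable after forgetting, so $\pi^A(F_{U_1,U_2})$ is the face $F_{U_1 \smallsetminus A,\, U_2 \smallsetminus A}$ of $\M_{V \smallsetminus A}$, a codimension 1 stratum of the base. Such a face is horizontal, not fibrewise.

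\emph{Case 2: $|U_1 \smallsetminus A| \leq 1$.} After forgetting $A$, the $U_1$-component has either 1 or 2 special points (the node $x$ plus at most one surviving marked point), hence is an unstable rational bubble which is contracted in the stable model. Contraction yields a single smooth irreducible component carrying all marked points of $V \smallsetminus A$, and produces no new node, so $\pi^A(F_{U_1,U_2})$ lies in the open stratum $f\mathcal{M}_{V \smallsetminus A} \subset \M_{V \smallsetminus A}$. This shows $F_{U_1,U_2}$ is fibrewise.

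\textbf{Finishing.} Since $\pi^A$ is a submersion over the open stratum (Case 2 faces live over the interior) and since the faces of $\M_V$ are mutually transverse, each Case 2 face meets a generic fiber in a codimension 1 submanifold of that fiber, confirming it is a fibrewise boundary component in the sense used in the paper. Distinct partitions index distinct faces of $\M_V$, giving the claimed bijection.

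\textbf{Main obstacle.} The essential content is the claim in Case 2 that the image of $F_{U_1,U_2}$ actually lands in the \emph{interior} of $\M_{V \smallsetminus A}$, rather than in some further face indexed by a different partition. This requires tracking how the forgetful map interacts with the iterated real oriented blowup construction of $\M_V$ from $(\mathbb{CP}^1)^{n-3}$, or equivalently with the stabilization procedure for rational curves carrying tangent rays. One has to verify that collapsing the unstable $U_1$-bubble does not force any of the remaining points of $V \smallsetminus A$ to collide, which follows because the possible collisions are exactly the other diagonals that have already been separated by the blowups encoded in the $U_2$-factor $\uM_{U_2 \sqcup \{y\}}$. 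Once this is checked, the rest of the case analysis is essentially combinatorial bookkeeping.
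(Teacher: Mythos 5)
Your argument is correct and is essentially the paper's own proof, just written out in more detail: the paper likewise identifies the codimension~1 strata of $\M_V$ with partitions $U_1\sqcup U_2$ and observes that such a stratum fibres over all of $\M_{V\smallsetminus A}$ (rather than over a face of it) exactly when one lobe carries at most one point of $V\smallsetminus A$, i.e.\ becomes unstable and is contracted. Your Case 1/Case 2 split and the stabilization argument for why the generic point of a Case 2 face lands in the interior of the base are precisely the content of that observation.
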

\begin{proof}
  Corresponding to a partition $U_1 \sqcup U_2$ is the closure of the
  locus in $\M_V$ in which a node separates the sphere into two lobes,
  one containing the points of $U_1$ and the other containing the
  points of $U_2$.  This closed stratum fibres over
  $\M_{V\smallsetminus A}$ if and only if one of the $U_i$ contains at
  most one point of $V\smallsetminus A$ (a priori it is an unordered
  partition, but we can always choose to call the set with this
  property $U_1$).
\end{proof}

Given a partition $V = U_1 \sqcup U_2$ as above, we will write
$\partial_{U_1,U_2} \subset \M_{V}$ for the corresponding fibrewise
boundary component of $\pi^A$.

\subsection{Orientations}\label{orientations}
Fibrewise orientations of the canonical projections play an important
role in the formality proof in this paper.  Here we establish some
general definitions and conventions for working with orientations.

Given a finite dimensional real vector space $W$, let $\det(W)$ denote 
the top exterior power of $W$; a choice of a ray in this line is 
equivalent to a choice of an orientation of $W$.  If $S$ is a finite 
set then we write $\det(S) \coloneqq \det(\R^S)$ (and $\det(S) = \R$ when $S$ 
is empty).  If $S$ has cardinality at least 2 then choosing a ray in
$\det(S)$ is equivalent to choosing an ordering of $S$ up to even 
permutation. 

An orientation form on an $n$-dimensional manifold $M$ is a nowhere
vanishing $n$-form $\Omega_M$.  Clearly an orientation form determines
an orientation of $M$, and two orientation forms determine the same
orientation if and only if they agree up to multiplication by a
positive scalar function.  Given a fibre bundle $\pi: E \to B$, a
fibrewise orientation form is a form on $E$ whose
restriction to each fibre is an orientation form.

We will use the following general orientation conventions.
\begin{itemize}
\item Given $x\in S$, we define an isomorphism $\iota_x\colon \det(S) \cong \det(S
  \smallsetminus \{x\})$ by the rule $x \wedge r \mapsto r$ (inverse
  to the map $(x\wedge -)$).  
\item If $M$ is a manifold with boundary with orientation form
  $\Omega_M$ and $X$ is an outward pointing normal vector field on
  $\partial M$, then $\partial M$ is given the induced orientation
  $\Omega_{\partial M} = \iota_X \Omega_M$.  Thus the Stokes formula
  is $\int_M d\xi = \int_{\partial M} \xi$.
\item If $\pi_1\colon E_1 \to E_2$ and $\pi_2\colon E_2 \to B$ are fibre bundles
  with fibrewise orientation forms $\Omega_1$ and $\Omega_2$
  respectively, then we give the the composite bundle $(\pi_2 \circ
  \pi_1)$ the fibrewise orientation form $\pi_1^*\Omega_2 \wedge \Omega_1$.
\end{itemize}

Given a finite set $V$ and an element $u\in V$, the canonical
projection $\pi^{\{u\}} \co \M_V \to \M_{V\smallsetminus \{u\}}$ has a
standard fibrewise orientation defined as follows.  It factors as
\[
\M_V \stackrel{\pi_{\mathrm{dec}}^{\{u\}}}{\longrightarrow} \M_V/S^1
\stackrel{\pi^{\{u\}}_{\mathrm{pos}}}{\longrightarrow}
\M_{V\smallsetminus \{u\}},
\]
where $\pi^{\{u\}}_{\mathrm{dec}}$ forgets the tangent ray decoration
at the marked point $u$, and $\pi^{\{u\}}_{\mathrm{pos}}$ forgets the
position of $u$, thus forgetting it entirely.  The principal
$S^1$-bundle $\pi^{\{u\}}_{\mathrm{dec}}$ inherits a fibrewise
orientation $d\theta_u$ from the counterclockwise orientation of the
circle.  The map $\pi^{\{u\}}_{\mathrm{pos}}$ is a family of Riemann
surfaces and so the complex structures of the fibres determine a
fibrewise orientation $\Omega_u$. Using the above orientation
conventions, we then give $\pi^{\{u\}}$ the fibrewise orientation
determined by the form $(\pi^{\{u\}}_{\mathrm{dec}})^*\Omega_u \wedge d\theta_u$.

Given $A \subset V$, we define a bijection between rays in
$\det(A)$ and fibrewise orientations of $\pi^A\co
\M_{V} \to \M_{V \smallsetminus A}$ as follows.  The ray spanned by $u_1
\wedge \cdots \wedge u_k$ (where $u_i$ are the elements of $A$)
corresponds to the fibrewise orientation determined by the above
conventions and the factorisation $\pi^{A}=\pi^{\{u_1\}} \circ \cdots
\circ \pi^{\{u_k\}}$.

\section{Cohomology of the unframed and framed little discs}
We will first recall Arnold's well-known presentation of the
cohomology of the ordered configuration spaces of $\R^2$, or
equivalently of the spaces of the unframed little discs operad
$D_2$.  Then we will use this result to give a new
presentation of the cohomology of the framed little discs operad which
will be convenient for working with the cyclic operad structure.

\subsection{Unframed little 2-discs}
Consider the little $2$-discs operad $D_2$. 
For $1\leq i< j \leq k$,
consider the map
\[
\pi_{ij}' \colon D_2(k) \to D_2(2) \simeq S^1
\]
which forgets all but the $i^{th}$ and $j^{th}$ discs.  Define a
collection of 1-forms
\[
\omega_{ij} \coloneqq (\pi_{ij}')^* \mathit d\theta \, ,
\]
where $d\theta$ is the standard volume form of $S^1$.  (We will often
abuse notation by using the same symbol for both a differential form
and the cohomology class it represents.)

\begin{theorem}{\rm (Arnold)}\label{Arnold-pres}
  The real cohomology of $D_2(n)$ is generated by the degree 1 classes
  $\omega_{ij}$ for $1\leq i \neq j \leq n$, subject to only to the symmetry relation,
  \[
  \omega_{ij}=\omega_{ji},
  \]
  and the Arnold relation,
  \[ 
    \omega_{ij} \omega_{jk} + \omega_{jk} \omega_{ki} +
    \omega_{ki} \omega_{ij} = 0
  \]
  for each triple of distinct indices $\{i,j,k\}$.
\end{theorem}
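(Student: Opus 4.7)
The plan is to reduce to the classical computation for ordered configuration spaces $F(\R^2,n)$, using the level-wise homotopy equivalence $D_2(n)\simeq F(\R^2,n)$ that sends a configuration of discs to the configuration of their centres. Under this equivalence $\pi_{ij}'$ corresponds to the normalised difference map $(z_1,\dots,z_n)\mapsto(z_j-z_i)/|z_j-z_i|$, so the symmetry relation $\omega_{ij}=\omega_{ji}$ is immediate from the invariance of $d\theta$ under the antipodal map on $S^1$.

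For the additive structure I would induct on $n$ using the Fadell--Neuwirth fibration $p_n\colon F(\R^2,n)\to F(\R^2,n-1)$ that forgets the last point. Its fibre over $(z_1,\dots,z_{n-1})$ is $\R^2\setminus\{z_1,\dots,z_{n-1}\}$, which is homotopy equivalent to a wedge of $n-1$ circles, and the restrictions of $\omega_{1n},\dots,\omega_{n-1,n}$ form a basis of $H^1$ of the fibre. Leray--Hirsch then yields
\[
H^*(F(\R^2,n);\R)\;\cong\; H^*(F(\R^2,n-1);\R)\otimes H^*\Bigl(\bigvee_{k=1}^{n-1}S^1;\R\Bigr)
\]
as $H^*(F(\R^2,n-1))$-modules, so by induction the classes $\omega_{ij}$ generate the cohomology ring and its Poincar\'e polynomial is $\prod_{k=1}^{n-1}(1+kt)$.

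For the Arnold relation, by pulling back along the forgetful map $F(\R^2,n)\to F(\R^2,\{i,j,k\})$ it suffices to verify it on $F(\R^2,3)$. Writing $\omega_{ij}=\tfrac{1}{2\pi}d\arg(z_j-z_i)$, a direct computation identifies the cyclic sum $\omega_{ij}\omega_{jk}+\omega_{jk}\omega_{ki}+\omega_{ki}\omega_{ij}$ with a multiple of the partial-fractions identity
\[
\frac{1}{(z_j-z_i)(z_k-z_i)}+\frac{1}{(z_i-z_j)(z_k-z_j)}+\frac{1}{(z_i-z_k)(z_j-z_k)}=0,
\]
and it therefore vanishes pointwise. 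Conceptually, this reflects the fact that the three normalised edge directions satisfy the single vector constraint $(z_j-z_i)+(z_k-z_j)+(z_i-z_k)=0$, so the induced map $F(\R^2,3)\to(S^1)^3$ has $2$-dimensional image.

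Finally, to see that these relations are exhaustive, let $A_n$ denote the abstract graded-commutative $\R$-algebra with generators $\omega_{ij}$ in degree $1$ modulo the symmetry and Arnold relations, and consider the natural surjection $A_n\twoheadrightarrow H^*(D_2(n);\R)$. A standard combinatorial argument shows that $A_n$ is spanned by the \emph{Arnold monomials} $\omega_{i_1 j_1}\cdots\omega_{i_r j_r}$ with $j_1<\cdots<j_r$ and $i_s<j_s$, and that the Poincar\'e polynomial of the span of these monomials is $\prod_{k=1}^{n-1}(1+kt)$. Matching this with the Leray--Hirsch dimension count forces the surjection to be an isomorphism. The main obstacle is this final spanning step: using the Arnold relation as a rewriting rule to reduce an arbitrary monomial to normal form requires a careful lexicographic induction to prove termination and to ensure that no independent class is accidentally killed.
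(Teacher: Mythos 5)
The paper offers no proof of this statement --- it is quoted as a classical theorem of Arnold --- so there is nothing in the text to compare against. Your sketch is the standard argument and its overall structure is sound: passing to $F(\R^2,n)$ via centres, using the Fadell--Neuwirth fibrations and Leray--Hirsch to get ring generation and the Poincar\'e polynomial $\prod_{k=1}^{n-1}(1+kt)$, verifying the Arnold relation on $F(\R^2,3)$, and closing the argument by showing the abstract algebra is spanned by the normal-form monomials with matching dimension count. The rewriting step you flag at the end (using $\omega_{ij}\omega_{jk}=\omega_{ki}\omega_{jk}-\omega_{ki}\omega_{ij}$ to eliminate repeated maximal second indices, with termination by a lexicographic order on the multiset of second indices) is indeed the standard one and does go through.

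Two inaccuracies are worth correcting. First, the cyclic sum $\omega_{ij}\omega_{jk}+\omega_{jk}\omega_{ki}+\omega_{ki}\omega_{ij}$ does \emph{not} vanish pointwise for the real forms $\omega_{uv}=\tfrac{1}{2\pi}d\arg(z_v-z_u)$: evaluating at $(z_i,z_j,z_k)=(0,1,i)$ one finds a nonzero $2$-form (for instance the coefficient of $dx_i\wedge dy_j$ is $1/2$). What vanishes identically, by exactly the partial-fraction identity you quote, is the corresponding sum for the holomorphic logarithmic forms $w_{uv}=\tfrac{1}{2\pi i}\,d\log(z_v-z_u)$; since $w_{uv}-\omega_{uv}$ is exact, the two represent the same cohomology class and the relation still holds in $H^*$, which is all the theorem asserts --- so this is a one-line repair, not a fatal gap, but as written the claim is false. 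Second, the ``conceptual'' aside that $F(\R^2,3)\to(S^1)^3$ has $2$-dimensional image is incorrect: a triple of edge directions determines the triangle up to translation and positive scaling, so the image is a full-dimensional (3-dimensional) subset of $(S^1)^3$, namely the triples of unit vectors not contained in an open half-plane. That remark is not used in the proof, but it should be deleted rather than left as justification.
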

In fact, this gives a presentation of the integral cohomology, but we
shall only be concerned with real cohomology in this paper.

\subsection{Framed little 2-discs}

Since $\M_{n+1} \simeq fD_2(n) = D_2(n)\times (S^1)^{n}$, where
the $i^{th}$ $S^1$ factor measures the angle by which the $i^{th}$
disc is rotated, one has
\begin{equation}\label{framed-noncyclic-pres}
H^*(\M_{n+1}) \cong H^*(D_2(n)) \otimes \Lambda(\eta_1,
\ldots, \eta_n),
\end{equation}
where $\eta_i$ is the class represented by the volume form $d\theta_i$
of the $i$-th circle.  Thus the cohomology is generated by the classes
$\omega_{ij}$ together with the classes $\eta_i$, and subject only to
the symmetry and Arnold relations.  The symmetric group $\Sigma_{n+1}$ acts on
$\M_{n+1}$ by permuting the labels of the marked points.  The subgroup
$\Sigma_n\subset \Sigma_{n+1}$, consisting of permutations which fix
0, permutes the generators and relations among themselves in the
obvious way.  However, the permutations which do not fix zero act in a
more complicated way.

We will now give a different presentation in which the generators and
relations of $H^*(\M_{n+1})$ are both permuted by the full symmetric
group $\Sigma_{n+1}$.  Let $\pi_{ij}\colon \M_{n+1} \to \M_2$ the
projection forgetting all but the $i$-th and the $j$-th points with
their tangent rays (see earlier section \ref{canonical-projections}).
Since $\M_2 \cong S^1$, we define closed 1-forms
\[
\alpha_{ij} \coloneqq \pi_{ij}^* d\theta \in \Omega^*(\M_{n+1}).
\]
which represent generators of the cohomology.  To describe the
relations among these generators we must relate them to the generators
$\omega_{ij}$, $\eta_i$ described above.

There are two ways of identifying the space $\M_{3}$ with $(S^1)^3$.
The first way is to apply the unique conformal automorphism of $\C
P^1$ which puts the marked points $p_0, p_1, p_2$ labelled by $0,1,2$
at the points $\infty, 0, 1 \in \C P^1$ respectively.  One then reads
off the angle $\varphi_i$, in the counter-clockwise sense, from the
equator (going in the direction from $0$ to $1$ to $\infty$) to the
tangent ray at the $i$ marked point.  In terms of these coordinates,
$\alpha_{ij} = d(\varphi_i +\varphi_j)$.

The other way of identifying $\M_3$ with $(S^1)^3$ is to put the
points at $\infty,0,1$ and then rotate the sphere around the axis
through $0$ and $\infty$, so that the tangent ray of the point at
$\infty$ is parallel to the equator (pointing in the direction from
$\infty$ to $0$ to $1$).  We define $\theta_1$ to be the angle between
the ray at $0$ and the equator. Parallel transport along the geodesic
from $p_1$ to $p_2$ of the tangent ray to the equator at $p_1$ gives a
reference ray at $p_2$ and $\theta_2$ is the angle from this ray to
the tangent ray of $p_2$.  The angle $\psi$ is measured at $p_1$ from
the equator to the geodesic from $p_1$ and $p_2$.  In terms of
these coordinates, $\omega_{12} = d\psi$ and $\eta_i = d\theta_i$.

These two different coordinate systems are illustrated below.
\begin{center}
\input{coord-systems.pst}
\end{center}
One easily sees that these two coordinate systems are related as follows:
\begin{align*}
\theta_1 & = \varphi_1 + \varphi_0, & \varphi_1 = \theta_1 - \psi \\
\theta_2 & = \varphi_2 + \varphi_0, & \varphi_2 = \theta_2 - \psi \\
\psi & = \varphi_0. & 
\end{align*}
Hence, in the cohomology of $\M_n$ the two sets of generators are
related by:
\begin{align*}
\omega_{ij} & = (1/2)(\alpha_{0i} + \alpha_{0j} - \alpha_{ij}), & 
\alpha_{ij} = \eta_i + \eta_j - 2\omega_{ij} \\
\eta_i & = \alpha_{0i}. 
\end{align*}

In terms of the $\alpha$ classes, the Arnold relation becomes:
\begin{align*}
  &\alpha_{ij} \alpha_{jk} + \alpha_{jk} \alpha_{ki} + \alpha_{ki}\alpha_{ij}   \\
 +&\alpha_{i0} \alpha_{0j} + \alpha_{j0} \alpha_{0k} + \alpha_{k0}\alpha_{0i}   \\
 +&\alpha_{kj} \alpha_{j0} + \alpha_{ik} \alpha_{k0} + \alpha_{ji}\alpha_{i0}   \\
 +&\alpha_{0j} \alpha_{ji} + \alpha_{0k} \alpha_{kj} + \alpha_{0i}\alpha_{ik} = 0.
\end{align*}
The action of the symmetric group must send relations to relations,
so one can write the relations in a more invariant form.
Let $\{i_1,i_2,i_3,i_4\}$ be four distinct elements of $\{0,\ldots n\}$.
The \emph{cyclic Arnold relations} are
\[
\sum_{\sigma \in A_4} \alpha_{i_{\sigma(1)} i_{\sigma(2)}}
\alpha_{i_{\sigma(2)} i_{\sigma(3)}} = 0,
\]
where $A_4$ is the alternating group on 4 letters.  Clearly the
symmetric group $\Sigma_{n+1}$ sends each relation of this type to
another relation of this type. 

\begin{theorem}\label{cyclic-pres}
  The real cohomology of $\M_{n+1}$ is generated by the 1-dimensional
  classes $\alpha_{ij}$ ($0 \leq i \neq j \leq n$) subject only to the
  cyclic Arnold relations, one for each 4-tuple of distinct indices
  $\{i_1,i_2,i_3,i_4\}$ between $0$ and $n$, and to the symmetry
  relation $\alpha_{ij}=\alpha_{ji}$.
\end{theorem}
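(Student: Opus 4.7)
The plan is to reduce to the Arnold presentation (\ref{framed-noncyclic-pres}), using the explicit change of variables already computed just above the theorem. Write $A_n$ for the abstract graded-commutative $\R$-algebra with degree $1$ generators $\alpha_{ij}$ subject to the symmetry and cyclic Arnold relations as in the theorem, and write $B_n = H^*(D_2(n)) \otimes \Lambda(\eta_1, \ldots, \eta_n)$ for the algebra with the presentation coming from Theorem \ref{Arnold-pres} and (\ref{framed-noncyclic-pres}). The coordinate comparison above yields two candidate algebra maps: $\phi\colon A_n \to B_n$ defined on generators by $\alpha_{0i} \mapsto \eta_i$ and $\alpha_{ij} \mapsto \eta_i + \eta_j - 2\omega_{ij}$ for $i,j \geq 1$, and a candidate inverse $\psi\colon B_n \to A_n$ defined by $\eta_i \mapsto \alpha_{0i}$ and $\omega_{ij} \mapsto \tfrac{1}{2}(\alpha_{0i} + \alpha_{0j} - \alpha_{ij})$.

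The first step is to verify that $\phi$ is well defined, i.e.\ that it carries each cyclic Arnold relation to zero in $B_n$. I would split into two cases according to whether $0$ appears among the four indices. If $0 = i_1$, then substituting $\alpha_{0k} = \eta_k$ and $\alpha_{ij} = \eta_i + \eta_j - 2\omega_{ij}$ expands the $12$-term sum into three groups: the pure $\eta$-monomials cancel using $\eta_a^2 = 0$ and graded anticommutativity; the mixed $\eta$--$\omega$ monomials pair up and cancel after using $\eta_a \omega_{bc} = -\omega_{bc}\eta_a$; and the remaining purely $\omega$-terms reduce to a nonzero scalar multiple of the classical Arnold expression on $\{i_2, i_3, i_4\}$, which vanishes in $B_n$. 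If $0$ is not among the four indices, the same sort of expansion produces a multiple of the classical Arnold relation on any three of them, plus vanishing $\eta$-contributions. A parallel calculation then shows that $\psi$ is well defined: expanding the image of the classical Arnold relation in the $\alpha_{ij}$ produces a linear combination of the cyclic Arnold relations on $\{0,i,j,k\}$, modulo the symmetry relations already imposed in $A_n$.

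Since $\phi\circ\psi$ and $\psi\circ\phi$ act as the identity on generators by construction of the change of variables, $\phi$ is an algebra isomorphism. It remains to identify the composite $A_n \xrightarrow{\phi} B_n \xrightarrow{\cong} H^*(\M_{n+1};\R)$ with the tautological map that sends $\alpha_{ij}$ to the cohomology class of $\pi_{ij}^* d\theta$. For $i = 0$ this is the definition of $\eta_j$, and for $i,j \geq 1$ it is the identity $\alpha_{ij} = \eta_i + \eta_j - 2\omega_{ij}$ in cohomology that was derived from the two coordinate systems on $\M_3$ and pulled back via $\pi_{ij}$. This identifies the cyclic presentation with $H^*(\M_{n+1};\R)$.

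The principal obstacle is the combinatorial relation-checking: the cyclic Arnold sum has $12$ terms, each of which expands into as many as $9$ monomials in $\eta$s and $\omega$s, and one must group and sign-track these carefully (using graded anticommutativity of all degree $1$ classes) to isolate the underlying classical Arnold relation. The verification is entirely finite and elementary, but the bookkeeping—especially for the $0\notin\{i_1,\ldots,i_4\}$ case, where there is no shortcut via $\alpha_{0i}=\eta_i$—is where the main work of the proof lies.
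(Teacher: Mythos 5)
Your proposal is correct and follows the same route the paper takes (the paper in fact gives no formal proof of this theorem; the argument is exactly the change-of-variables discussion preceding the statement, which you have turned into a pair of mutually inverse algebra maps $\phi,\psi$ between the two presentations). The one place worth flagging is the step you identify as the main work, namely checking that the cyclic Arnold relation for a $4$-tuple \emph{not} containing $0$ maps into the Arnold ideal. Your direct expansion does work, but two remarks: first, its outcome is a linear combination of the classical Arnold relations on the four complementary triples (grouping the $A_4$-sum by the omitted index), not a multiple of a single Arnold relation as you describe, and the $\eta$-contributions cancel only across all twelve terms rather than triple-by-triple, so the bookkeeping is a bit more global than your sketch suggests. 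Second, this entire case can be bypassed, which is what the paper implicitly does: $\Sigma_{n+1}$ acts on $H^*(\M_{n+1})$ by algebra automorphisms permuting the classes $\alpha_{ij}$ (since $\pi_{ij}\circ\sigma = \pi_{\sigma^{-1}(i)\,\sigma^{-1}(j)}$), and the $A_4$-sum is natural under relabelling of the index $4$-tuple; hence once the cyclic Arnold relation is verified in cohomology for $4$-tuples containing $0$ (your case 1, which is the paper's displayed $12$-term computation), applying a permutation moving $0$ out of the $4$-tuple yields all the others. With either route, the rest of your argument --- well-definedness of $\psi$ via the same displayed identity, mutual inverseness on generators, and identification of $\phi$ with the tautological map $\alpha_{ij}\mapsto[\pi_{ij}^*d\theta]$ --- is exactly what is needed.
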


\section{The affine and projective graph complexes}

The main ingredient in Kontsevich's proof of formality of the unframed
little discs operad is the construction of a certain differential
graded algebra of graphs, the \emph{Kontsevich graph complex} $\kont_n$,
which resolves the usual Arnold presentation (Theorem
\ref{Arnold-pres}) of the cohomology of the little discs $D_2(n)$.  As
$n$ varies these DGAs of graphs fit together to form a cooperad, and
the dual operad is quasi-isomorphic to the operad of chains on the little
discs.

Tensoring Kontsevich's resolution $\kont_n$ of $H^*(D_2(n))$ with
$\Lambda(\eta_1, \ldots, \eta_n)$ gives a resolution of the
presentation \eqref{framed-noncyclic-pres} of the cohomology ring of
$fD_2(n) \simeq \M_{n+1}$ --- we shall refer to this DGA as the
\emph{affine graph complex}, denoted $\aff_n$.  The collection of DGAs
$\{\aff_n\}$ forms a cooperad, but it is \emph{not} compatible with
the \emph{cyclic} operad structure of $\M$.  To deal with this we will
pass to the \emph{projective graph complex} $\proj_{n+1}$.  The
projective graph complexes do form a cyclic cooperad and we prove that
$\proj_{n+1}$ is indeed a resolution of $H^*(\M_{n+1})$ by a comparison with
$\aff_n$.  Summarising this:
\begin{itemize}
\item $\kont_n$ is Kontsevich's graph complex; it resolves $H^*(D_2(n))$ 
over $\R[\Sigma_n]$.
\item $\aff_n = \kont_n \otimes \Lambda (\eta_1, \ldots, \eta_n)$ is
  the affine graph complex; it resolves $H^*(\M_{n+1})$ over $\R[\Sigma_n]$.
\item $\proj_{n+1}$ is the projective graph complex, which 
  resolves $H^*(\M_{n+1})$ over $\R[\Sigma_{n+1}]$.
\end{itemize}
Each of these complexes will be defined in detail below.

\subsection{Some preliminaries on graphs and orientations of graphs}

By a graph $\gamma=(V,V_{int}, E)$ we shall mean a finite set $V\sqcup
V_{int}$ of vertices and a set $E$ of unordered pairs of vertices
representing the edges.  Note that our graphs cannot have loops or
double edges.  Those vertices in $V_{int}$ are called \emph{internal
  vertices}, and those in $V$ are called \emph{external vertices};
there is an induced partition of the set of edges
\[
E=E_{ext} \sqcup E_{\partial} \sqcup E_{int},
\]
where the set of \emph{internal edges}, $E_{int}$, consists of those
edges for which both endpoints are internal, the set of \emph{external
  edges}, $E_{ext}$, consists of those for which both endpoints are
external vertices, and the set of boundary edges, $E_\partial$,
consists of those which have one internal endpoint and one external
endpoint.

\begin{definition}
  An \emph{affine orientation} of a graph $\gamma=(V, V_{int},E)$ is a choice
  of a ray in the real line $\det(E)$.  A \emph{projective
    orientation} of $\gamma$ is a choice of a ray in
  $\det(V_{int} \sqcup E)$.
\end{definition}
If $\gamma$ is equipped with an affine or projective orientation then
we shall write $\overline{\gamma}$ for the same underlying graph
equipped with the opposite orientation.

\subsubsection{Edge deletion}
Given a graph $\gamma=(V,V_{int},E)$ and an edge $e$, we define a new
graph $\gamma\smallsetminus e = (V,V_{int}, E\smallsetminus \{e\})$ by
deleting the edge $e$.  If $\gamma$ has a projective or affine
orientation, then $\gamma \smallsetminus e$ inherits an orientation of
the same type by the isomorphism $\iota_e$ from section \ref{orientations}. 

\subsubsection{Edge contraction}
Given a graph $\gamma=(V,V_{int},E)$ and an edge $e$ that is not part
of a triangle of edges and has at least one internal endpoint, let
$\gamma/e$ denote the graph constructed from $\gamma$ by contracting
the edge $e$, i.e. deleting $e$ from the set of edges and identifying
its two endpoints together.  If $e$ is a boundary edge then the
resulting vertex is external; if $e\in E_{int}$ then the resulting
vertex is internal.  Thus the set of external vertices of $\gamma/e$
is identified with $V$.

If $\gamma$ has an affine orientation then $\gamma/e$ inherits an
affine orientation by the isomorphism $\iota_e$.  Inducing projective
orientations is slightly more complicated and depends on whether $e$
is boundary, or internal.
\begin{itemize}
\item If $e \in E_{\partial}$ then one endpoint, $u$, is external and one
  endpoint, $v$, is internal.  In this case there is a canonical
  identification of the edges of $\gamma/e$ with $E\smallsetminus
  \{e\}$; we identify the external vertices of $\gamma/e$ with $V$,
  and the internal vertices with $V_{int} \smallsetminus \{v\}$, by
  identifying the vertex in $\gamma/e$ at which $e$ was contracted
  with $u$.  Using these identifications, the induced projective
  orientation of $\gamma/e$ is the image of the projective orientation
  of $\gamma$ under the isomorphism $\iota_v \circ \iota_e$.
\item If $e\in E_{int}$ then inducing a projective
  orientation on $\gamma/e$ is equivalent to choosing an orientation
  of $e$, meaning a choice of one end, $v$, as \emph{head} and the
  other, $u$ as \emph{tail}.  As in the boundary edge case, we
  contract $e$ onto its tail and identify the internal vertices of
  $\gamma/e$ with $V \smallsetminus \{v\}$, and then we induce a
  projective orientation via $\iota_v \circ \iota_e$.
\end{itemize}

\subsubsection{Combined deletion and contraction}
Suppose that $(e,f)$ is a pair of edges in a graph $\gamma$ meeting
at an internal vertex $u$ and such that $e$ is not a side of a
triangle in $\gamma\smallsetminus f$. Then we shall write
\[
\gamma \oslash (e,f) \coloneqq (\gamma \smallsetminus f)/e.
\]
If $\gamma$ has a projective orientation and $e$ is either a boundary
edge or an oriented internal edge, then $\gamma\oslash (e,f)$ inherits a projective
orientation by the above discussion.

\subsection{The affine and projective graph complexes}

First recall Kontsevich's complex of ``admissible graphs''
\cite[Definition 13]{Ko}, denoted $\kont$.
\begin{definition}
  The vector space $\kont_V$ is spanned by isomorphism classes of
  affine oriented graphs with external vertex set $V$ modulo the relations
  \begin{enumerate}
  \item $\gamma \sim 0$ if there is a connected component of $\gamma$
    containing no external vertices,
  \item $\gamma \sim 0$ if there is an internal vertex of valence $<
    3$,
  \item $\overline{\gamma} \sim -\gamma$.
  \end{enumerate}
  There is a grading defined by $|E| - 2|V_{int}|$.  The differential
  of degree 1 is given by the formula
  \[
  d\gamma = \sum_{e } \gamma / e, 
  \]
  where $e$ runs among those boundary and internal edges such that
  $\gamma/e$ is defined.  There is an associative and graded
  commutative product defined by gluing the external vertices together
  according to the labelling, and by tensoring orientation rays.
\end{definition}

Note that the differential is a derivation of the product, and so
$\kont_V$ forms a commutative DGA.  We will write $\kont_V = \kont_n$ when
$V=\{1,\ldots, n\}$.

\begin{definition} \label{agc} The affine graph complex is the CDGA
  defined as \[\aff_n \coloneqq \kont_n \otimes \Lambda(\eta_1,\ldots,
  \eta_n).\]
\end{definition}

We now come to principal object of this paper: the CDGA of
\emph{projective graphs}.
\begin{definition} \label{defproj}
  The vector space of projective graphs $\proj_V$, for a finite set
  $V$, is spanned by isomorphism classes of projectively oriented
  graphs $\gamma$ with $V$ as set of external vertices, 
  modulo the relations
\begin{itemize}
\item[$(R1)$] if $\gamma$ has an internal vertex of valence $\leq 3$ then $\gamma \sim 0$,
\item[$(R2)$] if $\gamma$ has a component with at most one external vertex and at least one internal vertex then $\gamma \sim 0$,
\item[$(R3)$] $\overline{\gamma} \sim -\gamma$,
\item[$(R4)$] the ``pinwheel relation'': for $v$ an internal vertex of
  $\gamma$ and $E(v)$ the set of edges incident at $v$,
\[
\sum_{e\in E(v)} (\gamma \smallsetminus e) \sim 0.
\]
\end{itemize}
There is a grading defined by $\mathrm{deg} (\gamma) \coloneqq |E|-3|V_{int}|$.
There is an associative and graded commutative algebra
structure on $\proj_V$ given by gluing at the external
vertices, and tensoring orientation rays.
\end{definition}
In the case $V=\{0,\dots,n-1\}$, we write $\proj_V = \proj_n$.

We next define a differential $d$ on $\proj_V$.  The formula will
involve choices, but we will show that because of the pinwheel
relation it is in fact independent of these choices.  To define $d
\gamma$, with $\gamma \in \proj_V$, we first arbitrarily choose an
orientation of each internal edge $e$ of $\gamma$.  We orient the
boundary edges in each case so that their heads are at their internal
endpoints. The differential is now given by the
formula
\[
d \gamma = \sum_{(e, f)} \gamma \oslash (e,f)
\]
where the sum runs over all ordered pairs of edges $(e,f)$ for which
the head of $e$ is internal and incident to $f$ (so in particular,
they must lie in $E_\partial \sqcup E_{int}$) and such that $\gamma
\oslash (e,f)$ is defined (i.e. deleting $f$ and contracting $e$ does
not result in a loop or double-edge).

\begin{proposition}
$d\gamma$ is well-defined, independent of the choices of orientations
for the edges between internal vertices. 
\end{proposition}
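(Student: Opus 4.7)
The plan is to reduce to flipping the chosen orientation of a single internal edge $e_0=\{u_0,v_0\}$ and then exhibit the resulting change in $d\gamma$ as a pinwheel sum. The first observation is that a summand $\gamma\oslash(e,f)=(\gamma\smallsetminus f)/e$ depends on the head of an internal edge only when that edge is itself being contracted: if $e\neq e_0$, neither the validity condition on the pair $(e,f)$ nor the induced projective orientation on $\gamma\oslash(e,f)$ refers to the head of $e_0$, because contracting and deleting edges other than $e_0$ use only the isomorphisms $\iota_v\circ\iota_e$ and $\iota_f$ from section \ref{orientations} (and boundary edges carry a canonical head). Hence only the terms with $e=e_0$ can be affected by the flip.

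Using the orientation conventions of section \ref{orientations}, a direct computation shows that contracting $e_0$ with head $u_0$ yields the opposite projective orientation from contracting with head $v_0$, once we make the canonical identification of the merged vertex with $u_0$ in one convention and $v_0$ in the other. Combining this sign with the fact that switching the head changes the range of the inner sum from $\{f\ni v_0\}$ to $\{f\ni u_0\}$, the change in $d\gamma$ equals, up to an overall sign,
\[
\sum_{\substack{f\,\ne\,e_0\\ f\ \text{incident to}\ u_0\ \text{or}\ v_0\\ (\gamma\smallsetminus f)/e_0\ \text{defined}}}\bigl((\gamma\smallsetminus f)/e_0\bigr)_{v_0}.
\]
In the generic case where $e_0$ is not a side of any triangle in $\gamma$, the graph $\gamma':=\gamma/e_0$ is a well-defined projectively oriented graph in $\proj$, every $f$ above is automatically valid, and the indices are precisely the edges of $\gamma'$ incident to the merged internal vertex $w$. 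The displayed expression is then exactly the pinwheel sum at $w$ applied to $\gamma'$, and vanishes by relation $(R4)$.

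The main obstacle is the triangle case, where $\gamma/e_0$ itself has parallel edges and is not an object of $\proj$, so $(R4)$ cannot be invoked directly. I would dispatch this by a short case analysis. If $e_0$ lies in two or more triangles with distinct third vertices, then no single edge deletion destroys all of them simultaneously, so the set of valid $f$ is empty and the sum is trivially zero. If $e_0$ lies in a unique triangle $u_0 v_0 x$ with remaining sides $g$ and $h$, then the only valid $f$ are $g$ and $h$, and a direct orientation computation, along the same lines as the head-swap sign above, shows that $(\gamma\smallsetminus g)/e_0$ and $(\gamma\smallsetminus h)/e_0$ are isomorphic projectively oriented graphs carrying opposite orientations, and thus cancel in $\proj$. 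This exhausts the cases and proves that $d\gamma$ is independent of the chosen orientations of the internal edges.
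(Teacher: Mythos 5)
Your argument for independence of the internal edge orientations is correct and follows essentially the paper's route: flipping the orientation of a single internal edge $e_0$ changes only the terms with $e=e_0$, and (after the sign coming from the two contraction conventions) the change is exactly the pinwheel sum at the merged vertex of $\gamma/e_0$, which vanishes by $(R4)$. In fact you are more careful than the paper about the degenerate situations in which $e_0$ is a side of a triangle, so that $\gamma/e_0$ is not itself an admissible graph and $(R4)$ cannot be quoted verbatim; your case analysis there (no admissible $f$ at all when $e_0$ lies in two or more triangles, and cancellation of the two surviving terms when it lies in exactly one) is sound and fills a point the paper's one-line argument glosses over.

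There is, however, a gap relative to what this proposition is actually carrying in the paper: since $\proj_V$ is a quotient by the relations $(R1)$--$(R4)$ and the formula for $d\gamma$ is given on representative graphs, ``well-defined'' also requires checking that $d$ annihilates the pinwheel ideal, i.e.\ that $d\bigl(\sum_{g\in E(v)}\gamma\smallsetminus g\bigr)=0$ in $\proj_V$ for every internal vertex $v$. The paper devotes the second half of its proof to exactly this, by splitting the double sum according to whether the contracted edge lies in $E(v)$, regrouping one part into pinwheel sums in the graphs $\gamma\oslash(e,f)$ and cancelling the other part in pairs via $\gamma\oslash(e,f)\smallsetminus g=-\gamma\oslash(e,g)\smallsetminus f$. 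Your proof establishes only the orientation-independence half and would need this additional verification before $d$ can be regarded as an operator on $\proj_V$.
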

\begin{proof}
Suppose $e$ is an edge between internal vertices $v$, $w$.  
\begin{center}
\input{diff-well-defined.pst}
\end{center}
The difference between $d\gamma$ with $e$ oriented one way or the
other is precisely the pinwheel relation in $\gamma/e$ at the vertex
where $e$ is contracted.  Thus $d\gamma$ is independent of the choices
of orienting the internal edges.  

It remains to check that $d$ descends to the quotient by the pinwheel
relation.  Given a graph $\gamma$ with an internal vertex $v$, orient
all of the (internal) edges of $E(v)$ with their heads at $v$. 
Then,
\begin{align*}
d\Bigg( \sum_{g \in E(v)} & \gamma \smallsetminus g \Bigg)  = 
\sum_{g \in E(v)} \:\sum_{(e,f) \mathrm{\:in\:} \gamma \smallsetminus g}
 (\gamma \smallsetminus g)\oslash (e,f) \\
& = 
\sum_{g \in E(v)} \: \sum_{\substack{(e,f) \mathrm{\:in\:} \gamma
    \smallsetminus g \\ e \notin E(v)}}
 \hspace{-4mm}  -(\gamma \oslash (e,f) \smallsetminus g) \quad + \quad  
\sum_{g \in E(v)} \: \sum_{\substack{(e,f) \mathrm{\:in\:} \gamma
    \smallsetminus g \\ e \in E(v)}} \hspace{-2mm} - (\gamma \oslash (e,f) \smallsetminus g).
\end{align*}
The first of the two above summations can be grouped as a sum of terms, each of which is a
pinwheel sum over edges $g$ incident at $v$ in $\gamma\oslash (e,f)$.
The terms of the second summation cancel in pairs since $e\in E(v)$
oriented with head at $v$ means that $f\in E(v)$, and $ \gamma \oslash
(e,f) \smallsetminus g = -\gamma \oslash (e,g) \smallsetminus f$.
\end{proof}

\begin{proposition}
$d^2 = 0$.
\end{proposition}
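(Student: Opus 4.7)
The plan is to expand $d^2\gamma$ as a double sum of consecutive contract-and-delete moves and to exhibit sign-reversing cancellations. A term in $d^2\gamma$ is indexed by two pairs $\bigl((e_1,f_1),(e_2,f_2)\bigr)$, where $(e_1,f_1)$ is admissible for $\gamma$ and $(e_2,f_2)$ is admissible for $\gamma\oslash(e_1,f_1)$. Writing $v_i$ for the internal endpoint of $e_i$ that gets contracted, I would classify the indexing pairs into three types: (A) \emph{disjoint}, when $\{e_1,f_1,e_2,f_2\}$ are pairwise distinct and neither $v_1$ nor $v_2$ is incident to any edge of the other operation; (B) \emph{vertex-shared}, when the two operations meet at a common internal vertex but involve no common edge; and (C) \emph{edge-shared}, when an edge is common to both operations (in particular when the second operation uses the new merged vertex produced by the first).

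In case (A), the two moves commute combinatorially: $\gamma\oslash(e_1,f_1)\oslash(e_2,f_2)$ and $\gamma\oslash(e_2,f_2)\oslash(e_1,f_1)$ produce the same underlying graph. The induced projective orientation is obtained by applying the composite contraction $\iota_{v_2}\iota_{e_2}\iota_{f_2}\iota_{v_1}\iota_{e_1}\iota_{f_1}$ to the chosen ray in $\det(V_{int}\sqcup E)$. Since the six $\iota$-operators involve pairwise distinct elements and anticommute, moving the second triple past the first contributes a sign $(-1)^{3\cdot 3}=-1$. Hence the swap involution $\bigl((e_1,f_1),(e_2,f_2)\bigr)\leftrightarrow\bigl((e_2,f_2),(e_1,f_1)\bigr)$ is sign-reversing on class (A), and all class (A) terms cancel in pairs.

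In cases (B) and (C), I would fix the shared vertex (or edge) and sum over the remaining free edges; these summations should assemble into pinwheel sums $\sum_{g\in E(w)}(\gamma'\smallsetminus g)$ at appropriate internal vertices $w$ of an intermediate graph $\gamma'$, which vanish by relation (R4). The strategy parallels that of the preceding proposition, where a pinwheel identity was used to show that $d\gamma$ does not depend on the auxiliary choice of orientations on internal edges. The main obstacle will be the sign bookkeeping in cases (B) and (C): each subcase (for instance $v_1=v_2$ versus $v_1$ merely adjacent to $v_2$, or $f_1$ coinciding with an edge incident to $v_2$) requires checking that the edge-deletions contributing to a given term range precisely over $E(w)$ for a single internal vertex $w$ and with a common coefficient. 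This will reduce to a delicate but straightforward combinatorial case check once the orientation conventions of Section~\ref{orientations} are tracked through consistently.
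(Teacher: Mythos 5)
Your case (A) is sound and coincides with the paper's ``type (0)'' configurations: each $\oslash(e,f)$ removes two edges and one internal vertex from $\det(V_{int}\sqcup E)$, so swapping two disjoint operations moves one block of three anticommuting elements past another and reverses the orientation; those terms cancel in pairs. The gap is in cases (B) and (C), which is where essentially all of the work lives. Your proposed mechanism --- that the shared configurations assemble into pinwheel sums $\sum_{g\in E(w)}(\gamma'\smallsetminus g)$ and vanish by $(R4)$ --- does not match what actually happens, and I do not see how to make it work as stated. In the paper's analysis, most of the overlapping configurations (its types (1)--(4)) cancel by \emph{direct} sign-reversing pairings that never invoke the pinwheel relation: e.g.\ when $e$, $e'$, $f$, $f'$ all meet at one internal vertex, the term $\gamma\oslash(e,f)\oslash(e',f')$ cancels against $\gamma\oslash(e,f')\oslash(e',f)$ (swap the two deleted edges), and another family cancels by exchanging which of $e$, $e'$ is contracted first. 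Only one residual family (type (5), where $f'$ becomes incident to the head of $e'$ only \emph{after} contracting $e$) requires $(R4)$, and even there the pinwheel identity does not kill the terms outright: it rewrites the sum over $f'$ as two other sums, one of which vanishes by a swap symmetry and the other of which must be matched against \emph{different} type (5) terms elsewhere in $d^2\gamma$. So the cancellation is a genuinely mixed mechanism, not a collection of complete pinwheel sums.

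A second point you have not addressed, and which is needed to even enumerate the shared cases correctly: contracting $e$ can \emph{reverse the orientation} of a neighbouring internal edge $e'$. If $e'$ has its head at the tail of $e$ and that vertex becomes external (or the auxiliary orientation flips under the convention that boundary edges point toward their internal endpoint), then the pair $(e',f')$ that is admissible in $\gamma\oslash(e,f)$ corresponds to $(\overline{e'},f')$ in $\gamma$. This is exactly what produces the pairing between the paper's types (3) and (4), and it cuts across your (B)/(C) dichotomy. Until you (i) enumerate the ways $h(e')$ can fail to be incident at $f'$ in $\gamma$ yet become incident after $\oslash(e,f)$, (ii) identify for each such configuration its explicit cancellation partner, and (iii) isolate the one family where $(R4)$ is genuinely needed and track the extra terms the pinwheel identity introduces, the proof is not complete; deferring this to ``a delicate but straightforward combinatorial case check'' defers the actual content of the proposition.
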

\begin{proof}
  To compute $d^2 \gamma$ we first choose an orientation of each internal edge of
  $\gamma$; $d\gamma$ is a sum of terms of the form
  $\gamma\oslash(e,f)$ and the internal edges of each of these terms
  inherit orientations.  Now, $d^2 \gamma$ is a sum of graphs of the form $\pm
  \gamma\oslash(e,f)\oslash(e',f')$ (the sign depends only on
  $\gamma$), where $e,f$ are internal or boundary edges in $\gamma$
  with $f$ incident at the head of $e$, and $e',f'$ are internal or
  boundary edges in $\gamma\oslash(e,f)$ with $f'$ incident at the
  head of $e'$.  This means that, as edges in $\gamma$, the
  possible configurations are enumerated as follows:

  Type (0): the head of $e'$ is incident at $f'$ in $\gamma$ and
  the heads of $e$ and $e'$ are disjoint.

  If the head of $e'$ is incident at $f'$ in $\gamma$ but the
  configuration is not of type (0) then it must be

  Type (1): $f$, $f'$,  $e$ and $e'$ all meet at a
  vertex.
  
  If the head of $e'$ is not incident at $f'$ in $\gamma$ then it must
  be incident at $f'$ in $\gamma\oslash (e,f)$.  This can happen only
  if, in $\gamma$, either $e'$ and $f'$ are incident at opposite ends
  of $e$ (types (2), (4), and (5) below), or if $f'$ is incident at
  the tail of $e'$ and contracting $e$ causes $e'$ to reverse
  orientation (type (3) below).  So the configuration must be one of
  the following types.
  \begin{center}
    \input{allcases.pst}
  \end{center}

  We will show case by case that all terms can be grouped to exactly
  cancel.  For type (0), the way that orientations are induced yields 
 \[
  \gamma\oslash (e,f) \oslash (e',f') = \overline{\gamma\oslash
    (e',f') \oslash (e,f)},
  \]
  and so these two terms in $d^2\gamma$ cancel.  For types (1) and (2),
  $\gamma\oslash (e,f) \oslash (e',f')$ cancels with $\gamma\oslash
  (e,f') \oslash (e',f)$.  In the configuration of type (3), $\gamma \oslash
  (e,f) \oslash (e',f')$ cancels with $\gamma\oslash (e',f) \oslash
  (e,f')$ which is a term of type (4), and hence the terms of type (3)
  cancel in pairs with the terms of type (4).

  Type (5) is slightly more involved.  By the pinwheel relation,
  \begin{align}\label{pinwheel-cancelation-1}
  \sum_{f' \in E(u) \smallsetminus e} \gamma\oslash(e,f) \oslash
  (e'f') \quad = &\quad
  - \sum_{g\in E(v) \smallsetminus \{e,f,e'\}} 
         \gamma \oslash (e,f) \oslash (e',g) \\
  & \quad + \sum_{h\in E(w) \smallsetminus e'} 
         \gamma \oslash (e,f) \oslash (\overline{e'},h). \nonumber
  \end{align}
  \begin{center}
  \input{case-head-to-head.pst}
  \end{center}
  Consider the sum of the expression \eqref{pinwheel-cancelation-1}
  over all $f\in E(v) \smallsetminus \{e,e'\}$; the sum over $f,g$ is
  zero because of the symmetry of swapping $f$ and $g$ (just as for
  type (0)), and the sum over $f,h$ exactly cancels with the sum over
  $f,h$ of the type (5) terms $\gamma\oslash (e',f) \oslash (e,h)$
  occurring in $d^2\gamma$.
\end{proof}

\subsection{The combinatorial `integration' maps}
Here we introduce maps of graph complexes that are combinatorial
analogues of certain configuration space integrals.

Fix finite sets $V,V_{int}$.  Let $\proj^0_{V\sqcup V_{int}} \subset
\proj_{V\sqcup V_{int}}$ denote the subalgebra of graphs having no
internal vertices.  Note that for such a graph $\gamma=(V\sqcup
V_{int},\emptyset,E)$, a projective orientation is simply a ray in
$\det(E)$.  Given a ray $r \subset \det(V_{int})$, there is a linear
map
\[
\pi_r^{V_{int}} \colon \proj^0_{V \sqcup V_{int}} \to \proj_V.
\]
It is defined by turning the $V_{int}$ vertices into internal vertices
and mapping the orientations by
\[
\det(E) \stackrel{-\wedge r}{\longrightarrow} \det(V_{int}\sqcup E).
\]
Note that $\pi^{V_{int}}_{-r} = - \pi^{V_{int}}_r.$  Geometrically, this map
corresponds to integrating out the $V_{int}$ vertices as we shall see
in section \ref{int-section}.

A general projectively oriented graph $\gamma \in \proj_V$ with set of
internal vertices $V_{int}$ can be written as $\gamma
=\pi_r^{V_{int}}(\widetilde{\gamma})$, where $\widetilde{\gamma} \in
\proj^0_{V\sqcup V_{int}}$ and $r$ is a ray in $\det(V_{int})$. 

\subsection{The cyclic cooperad structure on the projective graph complexes}\label{cooperad-structure}

We now define a cyclic cooperad structure on the collection of
projective graph complexes.  For any partition $V=I\sqcup J$ we need
to define a co-composition map
\[
_I\bullet_J \colon \proj_V \to \proj_{I\sqcup\{x_1\}} \otimes
\proj_{J\sqcup\{x_2\}}.
\]
Let $e_{a,b}$ be the graph consisting of a single
edge between external vertices $a$ and $b$, oriented from $a$ to $b$.  Then,
\[
(_I\bullet_J)(e_{a,b}) = 
\begin{cases}
e_{a,b} \otimes 1  & \mbox{if $a,b \in I$,} \\
1\otimes e_{a,b}   & \mbox{if $a,b \in J$,} \\
e_{a,x_1}\otimes 1 + 1 \otimes e_{x_2,b} & \mbox{if $a \in I$ and $b\in J$.}
\end{cases}
\]

A graph $\gamma \in \proj^0_{V}$ can be written as a product of graphs
each having only a single edge.  The order of the factors corresponds
to the chosen orientation.  The multiplication of the formulae above
for the edges of $\gamma$ defines $(_I \bullet_J)(\gamma)$.

Given a partition $V_{int} = A \sqcup B$, choose rays $s \in \det(A)$
and $t \in \det(B)$ such that $s \wedge t = r$.  Now consider the
composition
\[
\phi^{I,J}_{A,B} \colon \proj^0_{V \sqcup V_{int}} 
\xrightarrow{_{I\sqcup A}\bullet_{J\sqcup B}} \proj^0_{I
  \sqcup A \sqcup \{x_1\}} \otimes \proj^0_{J \sqcup B \sqcup \{x_2\}}
\xrightarrow{\pi_s^A \otimes \pi_{t}^B} \proj_{I\sqcup
  \{x_1\}} \otimes \proj_{J\sqcup \{x_2\}}.
\]
We can now define the cooperad co-composition map on an arbitrary
graph $\gamma = \pi_r^{V_{int}}(\widetilde{\gamma})$ by the formula
\begin{equation}\label{co-comp1}
(_I \bullet_J)(\gamma) \coloneqq \sum_{V_{int}=A \sqcup B}
\phi^{I,J}_{A,B}(\widetilde{\gamma}).
\end{equation}
There is a useful alternative description of this co-composition that
will make the verification of certain properties easier.  An
$(I,J)$-\emph{splitting} of a graph $\gamma$ is a partition of the
vertices into two sets $V_L \sqcup V_R = V \sqcup V_{int}$ with $I
\subset V_L$ and $J \subset V_R$ and a partition of the edges into two
sets $E_L \sqcup E_R$ such that if both of the endpoints of an edge
$e$ are in $V_i$ ($i\in \{L,R\}$) then $e\in E_i$.  Given a splitting
$\tau$, we can construct two new graphs $\gamma^\tau_i$ with vertices
$V_i \sqcup \{x_i\}$ and edges $E_i$; if $e\in E_i$, as an edge in
$\gamma$, has only one endpoint in $V_i$ then as an edge in
$\gamma^{\tau}_i$ it's endpoint that is not in $V_i$ is replaced by
the auxiliary vertex $x_i$.  If $\gamma$ has a projective orientation
$\Omega$ then we give $\gamma^\tau_L$ and $\gamma_R$ projective
orientations $\Omega_L$ and $\Omega_R$ such that $\Omega_L \wedge
\Omega_R = \Omega$.  By inspection one finds the following.
\begin{proposition}
  The co-composition map defined above in \eqref{co-comp1} is given by
  the formula
  \[
  _I \bullet_J (\gamma) = \sum_{\tau \in S(\gamma)} \gamma^\tau_L \otimes \gamma^\tau_R,
  \]
  where the sum runs over the set $S(\gamma)$ of all $(I,J)$-splittings of $\gamma$.
\end{proposition}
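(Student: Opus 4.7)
The plan is to verify the splitting formula in two stages, mirroring the two-step definition of $({}_I\bullet_J)$: first for graphs $\widetilde\gamma$ in the subalgebra $\proj^0$ with no internal vertices, then for a general $\gamma = \pi^{V_{int}}_r(\widetilde\gamma)$, by substituting into \eqref{co-comp1}.

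For the base case, suppose $\widetilde\gamma \in \proj^0_W$ and $W = I' \sqcup J'$. I would express $\widetilde\gamma$ as a graded-commutative product $e_1 \cdots e_k$ of its single-edge subgraphs in an order representing the chosen projective (equivalently affine) orientation. Since ${}_{I'}\bullet_{J'}$ was defined as the multiplicative extension of its values on single edges, expanding the product produces one summand for each wholly-$I'$ or wholly-$J'$ edge and two summands (a left and a right choice) for each crossing edge. When $\widetilde\gamma$ has no internal vertices, every $(I', J')$-splitting must have vertex partition $V_L = I'$, $V_R = J'$, and is thereby determined by a distribution of crossing edges among $E_L$ and $E_R$. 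The monomials in the expansion are therefore in bijection with splittings, and the Koszul signs produced by commuting tensor factors past each other match the shuffle signs required to identify $\Omega_L \wedge \Omega_R$ with $\Omega$.

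For the general case, fix $V = I \sqcup J$ and write $\gamma = \pi^{V_{int}}_r(\widetilde\gamma)$. The definition \eqref{co-comp1} reads
\[
({}_I \bullet_J)(\gamma) \;=\; \sum_{V_{int} = A \sqcup B} (\pi^A_s \otimes \pi^B_t)\bigl(({}_{I\sqcup A}\bullet_{J\sqcup B})(\widetilde\gamma)\bigr),
\]
with $s \wedge t = r$. By the base case applied with $I' = I\sqcup A$, $J' = J\sqcup B$, the inner term is the sum over edge partitions of $\widetilde\gamma$ compatible with the vertex partition $V_L = I \sqcup A$, $V_R = J \sqcup B$. Applying $\pi^A_s \otimes \pi^B_t$ reinternalizes the $A$-vertices on the left and the $B$-vertices on the right, turning each such edge partition into the pair $(\gamma^\tau_L, \gamma^\tau_R)$ associated with a splitting $\tau$. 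Conversely, any $(I,J)$-splitting $\tau$ of $\gamma$ uniquely records both the partition $V_{int} = A \sqcup B$ (according to which side each internal vertex lies on) and the edge partition, so it arises from exactly one summand of the double sum, giving the claimed bijection.

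The main obstacle is the orientation bookkeeping. The projective orientation of $\gamma$ is $r \wedge \widetilde\Omega$; the operators $\pi^A_s$ and $\pi^B_t$ produce orientations of the shape $s \wedge \widetilde\Omega_L$ and $t \wedge \widetilde\Omega_R$ on the two sides, whose wedge equals $r \wedge \widetilde\Omega$ only up to a Koszul sign that depends on the degrees but not on the individual splitting. Choosing $s, t$ with $s \wedge t = r$ and $\widetilde\Omega_L, \widetilde\Omega_R$ with $\widetilde\Omega_L \wedge \widetilde\Omega_R = \widetilde\Omega$ at the level of rays absorbs this sign, yielding exactly the convention $\Omega_L \wedge \Omega_R = \Omega$ used in the definition of $\gamma^\tau_L$ and $\gamma^\tau_R$. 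Once these sign manipulations are checked, the proof is a direct definitional unwinding.
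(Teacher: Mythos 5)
Your proof is correct and is essentially the definitional unwinding that the paper leaves implicit: the paper's entire justification for this proposition is the phrase \emph{by inspection}. Your two-stage argument (multiplicative expansion over single-edge factors for graphs in $\proj^0$, where the vertex partition of a splitting is forced, followed by matching the double sum over partitions $V_{int}=A\sqcup B$ and edge assignments with the set of all $(I,J)$-splittings) is the intended verification, and you correctly locate the only delicate point in the orientation bookkeeping, which the convention $\Omega_L\wedge\Omega_R=\Omega$ together with $s\wedge t=r$ is designed to absorb.
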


\begin{proposition}
  The co-composition maps given above are well-defined and they make
  $\proj_V$ into a cyclic cooperad in the category of commutative
  differential graded algebras.
\end{proposition}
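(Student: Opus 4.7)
The plan is to adopt the splitting formula from the preceding proposition as the working definition of $(_I \bullet_J)$ and verify the properties by combinatorial reorganization of sums. First I would prove the equivalence of the two formulations: for $\gamma = \pi_r^{V_{int}}(\widetilde{\gamma})$ with $\widetilde{\gamma}$ a product of single-edge graphs, distributively expanding $(_{I \sqcup A} \bullet_{J \sqcup B})(\widetilde{\gamma})$ gives, for each assignment of every ``straddling'' edge to $L$ or $R$, one summand that matches an $(I,J)$-splitting of $\gamma$; summing over partitions $V_{int} = A \sqcup B$ then produces exactly the sum over all splittings, with orientation rays matched via $s \wedge t = r$.

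For well-definedness I would check that the splitting formula descends through the relations $(R1)$--$(R4)$ on $\proj_V$. Relation $(R3)$ follows from $\Omega \mapsto -\Omega$ inducing $\Omega_L \wedge \Omega_R \mapsto -\Omega_L \wedge \Omega_R$ termwise. For $(R1)$, any $v \in V_{int}$ of valence $\leq 3$ in $\gamma$ remains internal in whichever factor $\gamma^\tau_i$ contains it, with valence no larger than in $\gamma$, so that factor vanishes by $(R1)$ on that side. For $(R2)$, a ``bad'' component is disconnected from both $I$ and $J$ and has no edges out of itself, so it lies entirely on one side of every splitting and is killed by $(R2)$ on that side (no auxiliary $x_i$ is added to that component). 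The pinwheel relation $(R4)$ is the main challenge: fix $v \in V_{int}$ and observe that splittings of $\gamma \smallsetminus e$ correspond to splittings of $\gamma$ together with an assignment of $e$ to $E_L$ or $E_R$ (forced when both endpoints of $e$ lie on the same side). Grouping the double sum $\sum_{e \in E(v)} \sum_\tau (\gamma \smallsetminus e)^\tau_L \otimes (\gamma \smallsetminus e)^\tau_R$ first by the side containing $v$ and then by the splitting of $\gamma$, the inner sum over edges of $E(v)$ becomes a pinwheel relation at $v$ inside whichever factor $\gamma^\tau_i$ contains it, which vanishes in $\proj_{I \sqcup \{x_1\}}$ or $\proj_{J \sqcup \{x_2\}}$.

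For the cyclic cooperad structure itself, co-symmetry is immediate from the symmetry of splittings under $L \leftrightarrow R$ together with the standard orientation conventions. Co-associativity for a triple partition $V = I \sqcup J \sqcup K$ follows because both iterated co-compositions evaluate to the sum over $3$-way splittings of $\gamma$. The co-unit $\epsilon \co \proj_{\{a,b\}} \to \R$ is the CDGA augmentation sending the empty graph to $1$ and every graph of positive degree to $0$; the co-unit axiom follows by inspecting $(_a \bullet_i)(e_{a,b}) = e_{a,x} \otimes 1 + 1 \otimes e_{y,b}$ and using that $\epsilon$ kills the first term. Multiplicativity of $(_I \bullet_J)$ reduces to the observation that splittings of a product $\gamma_1 \cdot \gamma_2$ (glued at external vertices) are in bijection with compatible pairs of splittings of $\gamma_1$ and $\gamma_2$, via $(\gamma_1 \cdot \gamma_2)^\tau_i = (\gamma_1^{\tau_1})_i \cdot (\gamma_2^{\tau_2})_i$.

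The main obstacle will be compatibility with the differential, i.e. showing that $d \circ (_I \bullet_J) = (_I \bullet_J) \circ d$. For an ordered edge pair $(e, f)$ contributing to $d \gamma$, the head $u$ of $e$ is an internal vertex incident to $f$ and lies on exactly one side of any splitting $\tau$; when both $e$ and $f$ lie in that side's $E_i$, the operation of contracting $e$ and deleting $f$ commutes with restriction to $\gamma^\tau_i$, giving a direct bijection with edge pairs in $\gamma^\tau_i$ that contribute to $d \gamma^\tau_i$. The remaining boundary configurations, where $e$ or $f$ crosses the splitting, must be seen to cancel; here once again the pinwheel relation together with the orientation conventions of Section \ref{orientations} is the tool that forces the cancellation, in parallel with the way these cancellations were used in the proof that $d^2 = 0$. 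Carrying out this bijective matching of sums, with careful tracking of signs and of the forced versus free assignments of crossing edges, completes the proof.
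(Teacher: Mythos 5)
Your overall strategy coincides with the paper's: take the splitting description $_I\bullet_J(\gamma)=\sum_{\tau}\gamma^\tau_L\otimes\gamma^\tau_R$ as the working definition and verify everything by reorganising sums over splittings. Your treatment of the pinwheel relation $(R4)$ is exactly the paper's argument (convert splittings of $\gamma\smallsetminus f$ into splittings of $\gamma$ with $f$ on the side containing $v$, then recognise the inner sum as a pinwheel at $v$ in one tensor factor), and the cyclic cooperad axioms are indeed routine. One smaller issue: your argument for $(R2)$ asserts that a component with at most one external vertex ``lies entirely on one side of every splitting.'' It does not --- a splitting is an arbitrary partition of the vertex set subject only to $I\subset V_L$, $J\subset V_R$, so the internal vertices of such a component may be scattered across both sides, and the resulting pieces get attached to the auxiliary external vertices $x_1,x_2$ (possibly joining up with other components there). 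The case analysis for $(R2)$ therefore needs more care than you give it, although the paper also dispatches $(R1)$--$(R3)$ with one sentence.

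The genuine gap is in the compatibility with the differential, which is the substantive content of the proposition and which you assert rather than prove. Moreover, the mechanism you point to is the wrong one: the crossing terms do \emph{not} cancel via the pinwheel relation. When an edge $e$ with $t(e)\in V_L$ and $h(e)\in V_R$ is contracted inside $\gamma^\tau_R$, it is contracted onto the \emph{external} auxiliary vertex $x_2$, and the pinwheel relation is only available at internal vertices, so there is nothing to invoke. What actually happens (and what you would need to exhibit) is a direct sign-reversing involution on the extra terms, using no relation in $\proj$ at all: given $(e,f)$ meeting at the internal vertex $v=h(e)$ and a splitting $\tau$ with $t(e)\in V_L$, $h(e)\in V_R$ and $e,f\in E_R$, let $\tau'$ be the splitting obtained by moving $e$, $f$ and $h(e)$ to the left; then
\[
\bigl(\gamma^{\tau'}_L\oslash(f,e)\bigr)\otimes\gamma^{\tau'}_R
=(-1)^{\mathrm{deg}(\gamma^{\tau}_L)+1}\,\gamma^{\tau}_L\otimes\bigl(\gamma^{\tau}_R\oslash(e,f)\bigr),
\]
so the two terms cancel. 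Note that the roles of contraction and deletion are \emph{exchanged} between the two sides of this pairing ($\oslash(f,e)$ versus $\oslash(e,f)$); this swap is what makes the involution work and is not something one would stumble on by analogy with the $d^2=0$ proof, where the pinwheel relation was only needed for one configuration type and the rest were plain transpositions. Without this identity, your proof of the key step is incomplete.
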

\begin{proof}
  There are three facts to verify.
  \begin{itemize}
  \item[(i)] $_I \bullet_J$ is compatible with the relations $(R1-4)$ of Definition \ref{defproj}. 
    \item[(ii)] $_I \bullet_J$ commutes with the differential.
  \item[(iii)] These maps satisfy the cyclic cooperad
    axioms given in section \ref{cyclic-operad-definitions}.
  \end{itemize}
  For (i), it is easy to verify compatibility with $(R1-3)$. For the
  pinwheel relation $(R4)$, suppose $\gamma$ is a projective graph
  with an internal vertex $v$ and external vertices $I\sqcup J$.  Then
\begin{align}\label{co-comp-pinwheel1}
{_I \bullet_J} \left(\sum_{f\in E(v)} \gamma \smallsetminus f \right) & = \sum_{f \in
  E(v)} {_I \bullet_J} (\gamma \smallsetminus f) \nonumber \\
 & =  \sum_{f \in E(v)} \sum_{\tau} \gamma^\tau_L \otimes \gamma^\tau_R, 
\end{align}
where $\tau$ runs over the set $S(\gamma \smallsetminus f)$ of
$(I,J)$-splittings of $\gamma \smallsetminus f$.  
Then \eqref{co-comp-pinwheel1} can be written as
\begin{align*}
  & \sum_{f \in E(v)} \left( \sum_{\substack{\tau \in S(\gamma
        \smallsetminus f) \\ \mathrm{s.t.\:} v\in V_L}} \gamma^\tau_L
    \otimes \gamma^\tau_R \quad +\quad \sum_{\substack{\tau \in
        S(\gamma \smallsetminus f) \\ \mathrm{s.t.\:} v\in V_R}}
    \gamma^\tau_L \otimes
    \gamma^\tau_R \right)\\
  = & \sum_{f \in E(v)} \left( \sum_{\substack{\tau' \in S(\gamma) \\
        \mathrm{s.t.\:} v\in V_L\\ \mathrm{and\:} f\in E_L}} (\gamma_L^{\tau'}
    \smallsetminus f) \otimes \gamma^{\tau'}_R \quad + \quad
    (-1)^{\mathrm{deg}(\gamma^{\tau'}_L)}\hspace{-4mm} \sum_{\substack{\tau' \in
        S(\gamma) \\ \mathrm{s.t.\:} v\in V_R\\ \mathrm{and\:} f\in E_R}}
    \gamma_L^{\tau'} \otimes (\gamma^{\tau'}_R \smallsetminus f)
  \right) \\
  = & \sum_{\substack{\tau' \in S(\gamma) \\ \mathrm{s.t.\:} v \in
     V_L}} \sum_{f \in E_L\cap E(v)}  \hspace{-4mm} (\gamma^{\tau'}_L \smallsetminus f)
  \otimes \gamma^{\tau'}_R \quad + \quad
  (-1)^{\mathrm{deg}(\gamma^{\tau'}_L)}  \hspace{-3mm} \sum_{\substack{\tau' \in
     S(\gamma) \\ \mathrm{s.t.\:} v \in V_R}} \sum_{f \in E_R \cap E(v)} \hspace{-4mm}
 \gamma^{\tau'}_L \otimes (\gamma^{\tau'}_R \smallsetminus f).
\end{align*}
This verifies statement (i).

We turn to (ii) now.  Given an oriented edge $e$, let $h(e)$
denote the head and $t(e)$ the tail.  Observe that choosing
a splitting of $\gamma\oslash(e,f)$ is the same as choosing a
splitting of $\gamma$ such that $f,h(e),t(e)$ are either all on the
left or all on the right.  Hence,
\begin{align*}
{_I \bullet_J}(d\gamma) = & \sum_{(e,f) \mathrm{\: in \:} \gamma}
\left( \sum_{\tau \in S(\gamma \oslash (e,f))} \gamma^\tau_L \otimes
\gamma^{\tau}_R \right)\\
=  \sum_{(e,f) \mathrm{\: in \:} \gamma} & \left( \sum_{\substack{\tau' \in
    S(\gamma) \\ \mathrm{s.t.\:} e,f \in E_L\\ h(e),t(e) \in V_L}}
 \hspace{-4mm}\left( \gamma^{\tau'}_L \oslash (e,f) \right) \otimes \gamma^{\tau'}_R 
\quad  + \quad   (-1)^{\mathrm{deg}(\gamma_L^{\tau'})} \hspace{-6mm}\sum_{\substack{\tau' \in
    S(\gamma) \\ \mathrm{s.t.\:} e,f \in E_R\\ h(e),t(e) \in V_R}} \hspace{-4mm}
\gamma^{\tau'}_L \otimes \left( \gamma^{\tau'}_R \oslash (e,f) \right)
\right) .
\end{align*}
Applying the differential to a co-composition and expanding similarly,
one sees that $d( {_I \bullet_J}(\gamma))$ is given by the above sum
but without the requirement $t(e)$ be on the same side as $h(e)$.  We
will show that these additional terms cancel out.  Suppose we are
given a pair $(e,f)$ in $\gamma$ meeting at an internal vertex
$v=h(e)$ and a splitting $\tau\in S(\gamma)$ such that $t(e) \in V_L$,
$h(e) \in V_R$, and $e,f\in E_R$.  Define a new splitting $\tau' \in
S(\gamma)$ by switching $e,f$ and $h(e)$ from the right to the left.
Then
\[
\left(\gamma_L^{\tau'} \oslash (f,e) \right) \otimes \gamma^{\tau'}_R
= (-1)^{\mathrm{deg}(\gamma^\tau_L) + 1} \left(\gamma_L^\tau \otimes \left(\gamma_R^\tau\oslash(e,f) \right)\right),
\]
and so these two terms exactly cancel.  All of the additional terms in
$d({_I \bullet_J}(\gamma))$ can thus be paired in this way to cancel.

Finally, the verification of (iii) is entirely straightforward.
\end{proof}

\section{$\proj_n$ is a resolution of $H^*(\M_n)$}

Kontsevich showed that the map $\kont_{n-1} \to H^*(D_2(n-1))$,
defined by sending a graph with no internal vertices to the
corresponding product of Arnold classes $\omega_{ij}$, one for each
edge $e_{i,j}$, and sending all graphs with internal vertices to zero,
is a quasi-isomorphism of cooperads; see \cite[Section 3.3.4]{Ko} or
\cite[Theorem 9.1]{LambVol}.  It follows immediately that the
analogous map $\aff_{n-1} \to H^*(\M_n)$ is also a quasi-isomorphism
of cooperads.  The cohomology $H^*(\M)$ is a cyclic cooperad, but $\aff$
is \emph{not} a cyclic cooperad.  Hence we must pass from affine to
projective graphs.

\begin{theorem}\label{projective-graphs-resolution}
  The map $\proj_V \to H^*(\M_V)$ defined by sending $\gamma$ to zero
  if it has any internal vertices, and otherwise sending it to the
  corresponding product of $\alpha_{ij}$ classes, one for each edge
  $e_{ij}$, is a quasi-isomorphism of cyclic cooperads.
\end{theorem}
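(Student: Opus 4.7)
The plan is to proceed in two stages. First, verify that the proposed map is well-defined as a morphism of cyclic differential graded cooperads. Second, establish that it is a quasi-isomorphism by comparison with the already-known Kontsevich quasi-isomorphism $\aff_n \xrightarrow{\sim} H^*(\M_{n+1})$ mentioned at the start of this section.

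For well-definedness, relations $(R1)$--$(R3)$ are respected automatically since graphs with any internal vertex map to zero; the pinwheel relation $(R4)$ is preserved because every summand $\gamma \smallsetminus e$ in the pinwheel sum at an internal vertex $v$ retains $v$, and hence also maps to zero. The chain condition $\phi(d\gamma) = 0$ is trivial when $|V_{\mathrm{int}}(\gamma)| \geq 2$. The essential case is $|V_{\mathrm{int}}(\gamma)| = 1$, with unique internal vertex $v$: the terms of $d\gamma$ are indexed by ordered pairs of edges at $v$, and for $v$ of valence $4$ the twelve resulting products of $\alpha$-classes are exactly the twelve summands of a cyclic Arnold relation (up to a common sign determined by the orientation conventions) and hence vanish in $H^*(\M_V)$ by Theorem~\ref{cyclic-pres}; for higher valence one obtains such an Arnold relation multiplied by a further product of $\alpha$'s, which is again zero. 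Compatibility with the co-composition reduces to checking on single edges: for $e_{ij}$ with $i\in I$, $j\in J$, the formula ${}_I\bullet_J(e_{ij}) = e_{i,x_1}\otimes 1 + 1 \otimes e_{x_2,j}$ matches the pullback of $\alpha_{ij}$ to the boundary stratum $\M_{I\sqcup\{x_1\}}\times \M_{J\sqcup\{x_2\}}$ of $\M_V$. Symmetric group equivariance is manifest on both sides.

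For the quasi-isomorphism, I would construct a chain map of cyclic cooperads $\Phi \co \aff_n \to \proj_{n+1}$ (where $V = \{0,1,\ldots,n\}$) fitting into a commutative triangle with the two surjections onto $H^*(\M_{n+1})$. On generators, set $\Phi(\eta_i) = e_{0i}$ and $\Phi(\omega_{ij}) = \frac{1}{2}(e_{0i} + e_{0j} - e_{ij})$, reflecting the cohomological change of basis $\omega_{ij} = \frac{1}{2}(\alpha_{0i} + \alpha_{0j} - \alpha_{ij})$ established in the previous section. Extend multiplicatively on external-vertex graphs, and define $\Phi$ on graphs with internal vertices by sending internal vertices to internal vertices, using the pinwheel relation to absorb the valence-$3$ case (permitted in $\kont$ but killed in $\proj$). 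Since the Kontsevich map $\aff_n \to H^*(\M_{n+1})$ is known to be a quasi-isomorphism, two-out-of-three then reduces the theorem to showing that $\Phi$ itself is a quasi-isomorphism. This last step I would attack by filtering each complex by the number of internal vertices and comparing the induced spectral sequences, whose $E_1$ pages ought to align by an explicit combinatorial bijection.

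The main obstacle will be this final comparison step: the differentials on $\aff$ (pure edge contraction) and on $\proj$ (combined deletion-contraction) have qualitatively different combinatorial shapes, so producing a chain map between them requires delicate book-keeping of orientation signs and systematic use of the pinwheel relation to absorb the extra deletion terms. Handling the valence-$3$ mismatch at internal vertices is likely to be the most subtle technical point.
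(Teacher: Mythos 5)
There is a genuine gap in the second (and main) stage. Your plan hinges on constructing an honest chain map $\Phi\co\aff_n\to\proj_{n+1}$ commuting with the differentials and fitting into a commutative triangle over $H^*(\M_{n+1})$, and this is exactly what fails. First, the map you sketch on graphs with internal vertices ("sending internal vertices to internal vertices") cannot even preserve degree: the grading on $\kont$ is $|E|-2|V_{int}|$ while on $\proj$ it is $|E|-3|V_{int}|$, so any comparison map must add one edge per internal vertex. The paper's comparison map $\psi$ does precisely this --- it adjoins a new external vertex $u_0$ and an edge from $u_0$ to every internal vertex (which also fixes the valence-$3$ mismatch, since trivalent vertices become $4$-valent) --- and even this carefully designed $\psi$ is explicitly \emph{not} a chain map: $d\psi-\psi d$ produces genuine error terms coming from pairs $(e,f)$ where $f$ is not the distinguished edge to $u_0$ (cases 2 and 3 in the proof of Lemma \ref{iso}). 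The pinwheel relation does not absorb these terms. The paper's resolution is to filter $\proj_n$ by the number of edges joining $u_0$ to other \emph{external} vertices, observe that all error terms lie in strictly higher filtration, so that $\psi$ induces an isomorphism of complexes $\aff_{n-1}\cong E_0(\proj_n)$, and then argue via the spectral sequence (together with surjectivity of $\proj_n\to H^*(\M_n)$ and a dimension count to force collapse at $E_1$). Your proposed filtration by the number of internal vertices cannot substitute for this: both differentials decrease $|V_{int}|$ by exactly one, so the associated graded differential is zero and that spectral sequence yields no information.

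Two smaller points. Your verification that the projection is a chain map is right in spirit for a $4$-valent internal vertex (the twelve terms of $d\gamma$ assemble into a cyclic Arnold relation, as in the paper's example section), but the claim that higher valence yields "an Arnold relation multiplied by a further product of $\alpha$'s" is not correct as stated: the residual factor $\alpha_{u\,\cdot}\cdots$ depends on which vertex $u$ the edge $e$ was contracted onto, so it is not a common factor that can be pulled out; this case needs a separate argument (in the paper it follows from Lemma \ref{Kont-integral-properties} and Theorem \ref{differential-of-graph}). Also, $\aff_n$ is not a cyclic cooperad (the paper notes this explicitly), so a "chain map of cyclic cooperads $\aff_n\to\proj_{n+1}$" does not typecheck; the comparison map need only be $\Sigma_n$-equivariant, with cyclicity living entirely on the $\proj$ side.
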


We will derive this theorem from the corresponding statement for
$\aff_{n-1}$ by a spectral sequence comparison of $\aff_{n-1}$ and
$\proj_n$.  The first step is to construct a map between $\aff_{n-1}$
and $\proj_n$, and for this we must introduce a couple of auxiliary
graded vector spaces.  Let $\xaff_{n-1}$ denote the graded vector
space covering $\aff_{n-1}$ in which each graph is equipped with an
injective map from the set of its internal vertices to $\N$.  Let $\xproj_n$
denote the graded vector space covering $\proj_n$ in which the
pinwheel relation is \emph{not} imposed and each graph is equipped
with an injective map from the set of its internal vertices to $\N$.

There is an injective linear map 
\[
\widetilde{\psi}\colon \xaff_{n-1} \hookrightarrow \xproj_n
\]
defined by sending $\gamma \otimes (\eta_{i_1} \wedge \cdots \wedge
\eta_{i_k})$ to the graph constructed by adding a disjoint external
vertex $0$, denoted $u_0$, and then successively adding an external
edge $h_j$ between $u_0$ and the external vertex labelled by $i_j$ for
$j=1,\ldots, k$, and then finally by adding a boundary edge $h_v$ from
$u_0$ to each internal vertex $v$.  The affine orientation $x$ of
$\gamma$ determines a projective orientation of
$\widetilde{\psi}(\gamma\otimes (\eta_{i_1} \wedge \cdots \wedge
\eta_{i_k}))$ by the expression
\[
x \wedge h_1 \wedge \cdots \wedge h_k \wedge (h_v \wedge v)_{v \in V_{int}},
\]
where $v$ runs over the internal vertices in some arbitrary order.

The image of $\widetilde{\psi}$ is precisely the vector space spanned
by graphs for which each internal vertex is connected to $u_0$ by an
edge.  One checks that $\widetilde{\psi}$ descends to a linear map
\[
\psi\colon \aff_{n-1} \to \proj_n.
\]
This is a morphism of graded algebras, but it does \emph{not} commute
with the differentials.  However, we have the following.

\begin{lemma}\label{linear-iso}
The map $\psi\colon \aff_{n-1} \to \proj_n$ is a linear isomorphism.
\end{lemma}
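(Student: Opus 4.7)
The plan is to prove both surjectivity and injectivity of $\psi$ via a combinatorial reduction based on the pinwheel relation.

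For surjectivity, I will show that every graph in $\xproj_n$ is equivalent modulo pinwheel to a combination of \emph{good} graphs, by which I mean graphs in the image of $\widetilde\psi$: those in which every internal vertex is joined to $u_0$ by a boundary edge. Suppose $\gamma$ has an internal vertex $v$ not adjacent to $u_0$. Since the boundary edge $h_v$ is absent, I can form $\gamma' \coloneqq \gamma \cup \{h_v\}$. The pinwheel relation $(R4)$ at $v$ in $\gamma'$ becomes, modulo pinwheel,
\[
0 \sim \sum_{e \in E_{\gamma'}(v)} \gamma' \smallsetminus e \;=\; \gamma + \sum_{e \in E_\gamma(v)} \gamma' \smallsetminus e,
\]
because $\gamma' \smallsetminus h_v = \gamma$. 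Every term $\gamma' \smallsetminus e$ on the right still has $v$ adjacent to $u_0$ via the surviving edge $h_v$, and deleting $e \in E_\gamma(v)$ cannot affect the adjacency of any other internal vertex to $u_0$: such an edge cannot be of the form $h_w$, since that would force $v=w$ and contradict the hypothesis that $v$ is non-adjacent. Thus $\gamma$ is expressed modulo pinwheel as a sum of graphs with strictly fewer non-adjacent internal vertices, and iteration reduces $\gamma$ to a combination of good graphs, i.e.\ to an element of the image of $\widetilde\psi$ mod pinwheel.

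For injectivity, I will construct an explicit left inverse $\rho\colon \proj_n \to \aff_{n-1}$ by packaging the reduction above into an algorithm. On a good graph define $\rho = \widetilde\psi^{-1}$; on a non-good graph $\gamma$, let $v$ be the non-adjacent internal vertex of smallest $\N$-label (available since $\xproj_n$ has labelled internal vertices) and set
\[
\rho(\gamma) \coloneqq -\sum_{e \in E_\gamma(v)} \rho(\gamma' \smallsetminus e),
\]
discarding any term in which some internal vertex drops to valence $\le 3$ by $(R1)$. The recursion terminates because each step strictly decreases the number of non-adjacent internal vertices. By construction $\rho \circ \widetilde\psi = \mathrm{id}$ on $\xaff_{n-1}$, so once $\rho$ is shown to descend through the pinwheel quotient, it provides a left inverse of $\psi$ and injectivity follows.

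The principal obstacle is verifying this descent: one must check that $\rho$ annihilates every pinwheel relation, not merely the specific ones used in its recursive definition. I would prove this by induction on the number of non-adjacent internal vertices in the ambient graph, distinguishing the case in which the pinwheeled vertex $w$ is adjacent to $u_0$ from the case in which it is not. In each case the argument reduces to a confluence/diamond lemma: any two orders in which the algorithm reaches good-graph normal form, whether by processing $w$ first or by processing another non-adjacent vertex first, produce the same final combination of good graphs, because the identity $\gamma + \sum_{e \in E_\gamma(v)}\gamma'\smallsetminus e \sim 0$ is itself a pinwheel relation and thus commutes up to further pinwheel relations with every other instance of $(R4)$. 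Combined with surjectivity, this gives the required linear isomorphism $\psi\colon \aff_{n-1} \xrightarrow{\cong} \proj_n$.
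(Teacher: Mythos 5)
Your reduction is the same one the paper uses: the identity $\gamma \sim -\sum_{e\in E_\gamma(v)}\gamma'\smallsetminus e$ is exactly the paper's operator $P_k$ applied to $\gamma$ (with your $\gamma'$ being its $Q_k(\gamma)$), your normal-form algorithm is its composite $\widetilde P_\infty = \cdots\circ P_2\circ P_1$, and your $\rho$ is its $P_\infty$. The surjectivity half of your argument is complete and correct.

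The gap is in the injectivity half, at exactly the point you flag and then defer. You justify the descent of $\rho$ through the pinwheel quotient by saying the rewriting identity ``is itself a pinwheel relation and thus commutes up to further pinwheel relations with every other instance of $(R4)$.'' That is not sufficient: commuting \emph{up to further pinwheel relations} gives no control over the value of $\rho$ on the pinwheel ideal, since $\rho$ is precisely the map whose vanishing on that ideal is in question. What is actually needed, and what the paper verifies by explicit computation (Lemma \ref{P-ops} (iii) and (iv)), is first that the single-vertex reduction operators commute \emph{exactly} as endomorphisms of $\xproj_n$ --- this is where the orientation bookkeeping enters, since edge deletions anticommute with each other and with the $Q_k$ --- and second that the reduction at a vertex $w$ annihilates the pinwheel relator at $w$ itself. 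The second point splits into the two cases you name but do not treat: if $w$ is not adjacent to $u_0$, the double sum $\sum_{e}\sum_{f\neq e}Q_w(\gamma\smallsetminus e)\smallsetminus f$ cancels by the $(e,f)\leftrightarrow(f,e)$ antisymmetry; if $w$ is already adjacent via $h_w$, the relator contains the term $\gamma\smallsetminus h_w$, and one must check that processing it reproduces exactly the negative of the sum of the remaining terms, using $Q_w(\gamma\smallsetminus h_w)=\gamma$. Exact commutativity is also what lets you reduce the verification for an arbitrary pinwheel relator to the case where its vertex is processed first by the algorithm. Without these computations the well-definedness of $\rho$ on $\proj_n$ --- equivalently, the order-independence of your normal form --- is unsupported; with them, your argument closes exactly as the paper's does.
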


The idea of the proof of the above lemma is to explicitly construct an
inverse to $\psi$ by using the pinwheel relation to rewrite any
projective graph as a sum of projective graphs in which each internal
vertex is connected by an edge to the external vertex $u_0$.  We now formalise
this procedure.  For any $k\in \N$, we define two linear operators
\[
P_k, Q_k \colon \xproj_n \to \xproj_n
\]
by specifying what they do on generators.  Given a projective graph
$\gamma \in \xproj_n$, if there is no internal vertex labelled by $k$
then $Q_k(\gamma) = \gamma$; if there is such an internal vertex then
$Q_k$ simply adds an edge $e$ from that vertex to $u_0$ or outputs
zero if there is already such an edge; the new projective orientation
is given by $x \mapsto e\wedge x$.  Observe that the operators
$Q_k$ anticommute.

Let $E(k)$ denote the set of edges in $\gamma$ which are incident at
the internal vertex with label $k$, where $E(k) = \emptyset$ if there
is no such vertex.

If $E(k)$ contains an edge to the external vertex $u_0$ then
$P_k(\gamma) = \gamma$; otherwise
\[
P_k(\gamma) = - \hspace{-2mm}\sum_{e\in E(k)} Q_k(\gamma) \smallsetminus e.
\]
One can think of $P_k$ as using the pinwheel relation to write
$\gamma$ as a sum of graphs that are in the image of $\psi$, although
we have not yet passed from $\xproj$ to $\proj$.

We can now proceed to define an inverse to $\psi$.  Consider the
infinite composition
\[
\widetilde{P}_\infty \coloneqq
( \cdots \circ P_2 \circ P_1) \colon
 \xproj_n \to \xproj_n;
\]
since any graph $\gamma$ has only finitely many internal vertices, all
but finitely many of the $P_k$ act by identity on $\gamma$, and so
this is well defined.    

\begin{lemma}\label{P-ops}
  \begin{itemize}
  \item[(i)] $\widetilde{P}_\infty$ is equivariant with respect to the
    action of the group $\Sigma_\N$ of permutations of $\N$ by
    relabelling the internal vertices.
  \item[(ii)]  $\gamma$ and $P_k(\gamma)$ project to the same element of
  $\proj_n$.
  \item[(iii)] Any two operators $P_h, P_k$ commute.
  \item[(iv)] The pinwheel ideal is contained in the kernel of
    $\widetilde{P}_\infty$.
  \end{itemize}
\end{lemma}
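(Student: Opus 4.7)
The plan is to prove the four assertions in the order (ii), (iii), (i), (iv), with the deepest content in (iv).

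For (ii), I apply the pinwheel relation at $v_k$ inside the graph $Q_k(\gamma)$. The edges at $v_k$ there are $E(k)\cup\{e_0\}$, where $e_0$ is the freshly added edge to $u_0$, so the pinwheel relation reads
\[
Q_k(\gamma)\smallsetminus e_0 \;+\; \sum_{e\in E(k)} Q_k(\gamma)\smallsetminus e \;\equiv\; 0.
\]
The orientation convention for $Q_k$ is set up precisely so that $Q_k(\gamma)\smallsetminus e_0 = \gamma$, and rearranging gives $\gamma\equiv P_k(\gamma)$ in $\proj_n$.

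For (iii) I argue by cases on whether $v_h$ or $v_k$ is already joined to $u_0$ in $\gamma$ and whether they are adjacent. If either vertex is already joined to $u_0$ then the corresponding operator acts trivially on all relevant summands and commutation is immediate. Otherwise I expand $P_hP_k(\gamma)$ and $P_kP_h(\gamma)$ as double sums over ordered pairs of deleted edges $(e,f)$ with $e\in E(k)$ and $f\in E(h)$, and the two expansions run over the same index set. Matching the summands then reduces to two anticommutation identities: edge-addition satisfies $Q_hQ_k=-Q_kQ_h$ because each $Q$ wedges a new edge onto the orientation from the left, and edge-deletion satisfies $\iota_e\iota_{e'}=-\iota_{e'}\iota_e$; the two signs combine to give equality. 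The subcase where $v_h$ and $v_k$ are adjacent needs separate bookkeeping for the summand that deletes the common edge, but the discrepancy balances out. Part (i) then follows formally: the conjugation relation $R_\sigma P_k R_\sigma^{-1}=P_{\sigma(k)}$, together with the commutativity of the $P_k$'s from (iii) and the fact that only finitely many factors act nontrivially on any fixed graph, shows that $R_\sigma\widetilde{P}_\infty R_\sigma^{-1}$ is just a reordering of $\widetilde{P}_\infty$ and hence equal to it.

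The main content is (iv). I first prove the local statement that $P_k$ already kills the pinwheel generator at the internal vertex labelled by $k$, in $\xproj_n$. Let $\sigma_v = \sum_{e\in E(v)}\gamma\smallsetminus e$ where $v=v_k$. If $v$ already has an edge $e_0$ to $u_0$, only the $e=e_0$ summand is affected nontrivially by $P_k$, and after using $Q_k(\gamma\smallsetminus e_0)=\gamma$ the resulting expression exactly cancels the remaining $e\neq e_0$ terms. If $v$ has no edge to $u_0$, every summand expands under $P_k$ to produce a double sum over ordered pairs $(e,e')$ of distinct edges at $v$; a direct orientation computation using $\iota_a(b\wedge y)=-b\wedge\iota_a(y)$ and $\iota_e\iota_{e'}=-\iota_{e'}\iota_e$ shows that the terms indexed by $(e,e')$ and $(e',e)$ are negatives of one another and cancel pairwise. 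Given this local vanishing, the full statement follows from (iii): for any pinwheel generator at a vertex of label $k$, commute $P_k$ past the other factors of $\widetilde{P}_\infty$ so that it acts first, then invoke the local vanishing. The step I expect to require the most care is the orientation bookkeeping, both for the adjacent subcase of (iii) and for the case of (iv) in which $v$ is not adjacent to $u_0$; my approach is to establish the sign identities $Q_hQ_k=-Q_kQ_h$ and $\iota_e\iota_{e'}=-\iota_{e'}\iota_e$ once at the outset and invoke them uniformly.
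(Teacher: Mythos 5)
Your proposal is correct and follows essentially the same route as the paper: (ii) via the pinwheel relation at $v_k$ in $Q_k(\gamma)$, (iii) by the double-sum expansion with the anticommutation of the $Q$'s and of edge deletions, and (iv) by first showing $P_k$ kills the pinwheel generator at $k$ (split into the same two cases, with the same pairwise cancellations) and then commuting $P_k$ to the front. The only cosmetic difference is that you derive (i) from (iii) via conjugation, where the paper simply declares (i) trivial; your version is if anything slightly more careful.
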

\begin{proof}
  Part (i) is trivial.  Part (ii) is immediate from the pinwheel
  relation.  For part (iii) we need only consider the case where
  $\gamma$ is a graph with internal vertices $j$ and $k$, neither of
  which has an edge to $u_0$ since in all other cases at least one of
  $P_j$ or $P_k$ acts by identity on $\gamma$.  In this situation,
  \[
  P_j \circ P_k (\gamma) = \sum_{e \in E(k)} \sum_{f \in E(j), f\neq e}
  Q_j ( Q_k(\gamma) \smallsetminus e) \smallsetminus f.
  \]
  The operations of deleting edges $e$ and $f$ anticommute, deleting
  either $e$ or $f$ anticommutes with $Q_j$ and $Q_k$, and $Q_j$ and
  $Q_k$ anticommute with one another.  Hence
  \[
  Q_j (Q_k (\gamma) \smallsetminus e) \smallsetminus f = Q_j (Q_k
  (\gamma) \smallsetminus f) \smallsetminus e.
  \]

  We turn to  part (iv).  It suffices to check
  that for any graph $\gamma$ with internal vertex $k$,
  \[
  \widetilde{P}_\infty \left( \sum_{e \in E(k)} \gamma \smallsetminus
    e \right) = 0.
  \]
  Since the operators $P_k$ all commute, we need only show that $P_k$
  sends $\sum_{e \in E(k)} (\gamma \smallsetminus e)$ to zero.  First
  suppose that there is no edge in $\gamma$ between the vertices $k$
  and $u_0$.  Then
  \[
  \sum_{e \in E(k)} P_k(\gamma \smallsetminus e) = - \sum_{e,f \in
    E(k), f\neq e} Q_k(\gamma \smallsetminus e) \smallsetminus f
  \]
  and this is zero since the $(e,f)$ term cancels exactly with the
  $(f,e)$ term.  Now suppose that there is an edge $h$ in $\gamma$
  between the internal vertex labelled $k$ and the external vertex
  $u_0$.  Then
  \begin{align*}
    P_k\left(\sum_{e \in E(k)} \gamma \smallsetminus e\right) & =
    \sum_{e \in E(k), e\neq h} \hspace{-2mm} (\gamma \smallsetminus e)
    \quad + \quad  P_k(\gamma \smallsetminus h)  \\
    & = \sum_{e \in E(k), e\neq h} \hspace{-2mm} (\gamma \smallsetminus e) \quad -
    \sum_{f\in E(k), f\neq h} \hspace{-2mm} Q_k(\gamma \smallsetminus h)
    \smallsetminus f = 0
  \end{align*}
  since $Q_k(\gamma \smallsetminus h) = \gamma$ so all terms cancel.
\end{proof}

\begin{proof}[Proof of Lemma \ref{linear-iso}]
  Observe that $\widetilde{P}_\infty(\gamma)$ lies in the image of
  $\widetilde{\psi}$.  Since $\widetilde{\psi}$ is injective we
  can thus regard $\widetilde{P}_\infty$ as a linear map
  \[
  \widetilde{P}_\infty\colon \xproj_n \to \xaff_{n-1}
  \]
  which clearly satisfies $\widetilde{P}_\infty \circ
  \widetilde{\psi} = \mathrm{id}_{\xaff_{n-1}}$ and, by Lemma \ref{P-ops},
  descends to a map
  \[
  P_\infty\colon \proj_n \to \aff_{n-1}
  \]
  such that $P_\infty \circ \psi = \mathrm{id}_{\aff_{n-1}}$.  On the
  other hand, Lemma \ref{P-ops} implies that $\widetilde{P}_\infty$
  descends to the identity on $\proj_n$.  But the induced map on $\proj_n$
  is equal to $\psi \circ P_\infty$.  Therefore $\psi$ is
  bijective.
\end{proof}

The complex $\proj_n$ has a descending filtration
\[
\proj_n=F_0(\proj_n) \supset F_1(\proj_n) \dots \supset F_{n-1}(\proj_n) \supset 
F_n(\proj_n)=0
\]
in which the $k$-th filtration level, $F_k(\proj_n) \subset \proj_n$, is the
subcomplex spanned by projective graphs having at least $k$ edges
connecting $u_0$ to other external vertices.  The filtration is well
defined because the pinwheel relation does not affect edges between
external vertices, and similarly, the differential is compatible with
the filtration.  There is a spectral sequence $E_r(\proj_n)$ associated
with this filtration.

\begin{lemma}\label{iso}
  The map $\psi$ induces an isomorphism of complexes $\aff_{n-1} \cong
  E_0(\proj_n)$.
\end{lemma}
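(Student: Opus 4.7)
The plan is to check three things: (i) $\psi$ preserves an appropriate filtration; (ii) it induces a bijection on associated gradeds; and (iii) it intertwines differentials modulo higher filtration. For (i), I filter $\aff_{n-1}$ by $\eta$-degree, $F_k(\aff_{n-1})\coloneqq \kont_{n-1}\otimes \bigoplus_{|I|\geq k}\R\langle\eta_{i_1}\wedge\cdots\wedge\eta_{i_{|I|}}\rangle$; since $d_{\aff}=d_K\otimes\id$ preserves $\eta$-degree strictly, $E_0(\aff_{n-1})=\aff_{n-1}$. By the explicit construction, $\psi(\gamma\otimes\eta_{i_1}\wedge\cdots\wedge\eta_{i_k})$ has exactly $k$ edges between $u_0$ and other external vertices, namely the $h_j$'s, hence lies in $F_k(\proj_n)\smallsetminus F_{k+1}(\proj_n)$. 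Combined with Lemma~\ref{linear-iso} this gives a graded linear isomorphism $\aff_{n-1}\cong E_0(\proj_n)$, settling (i) and (ii). The remaining content is
\[
d_{\proj}\psi(\gamma\otimes\eta_I) \equiv \psi(d_K\gamma\otimes\eta_I) \pmod{F_{|I|+1}(\proj_n)}.
\]

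To establish this, expand $d_{\proj}\psi(\gamma\otimes\eta_I) = \sum_{(e,f)} \psi(\gamma\otimes\eta_I)\oslash(e,f)$ and split the pairs according to whether $e,f\in\gamma$ or equal one of the new boundary edges $h_v$: type (a) $e,f\in\gamma$; type (b) $e=h_v$, $f\in\gamma$ at $v$; type (c) $e\in\gamma$ with internal head $v$, $f=h_v$. In type (a) with $e$ internal in $\gamma$ the contraction is forbidden by the triangle $e,h_{v_1},h_{v_2}$; with $e$ boundary from $u_j$ to $v$ and $j\in I$ it is forbidden by the triangle $e,h_v,h_j$; and in the remaining subcase ($e$ boundary, $j\notin I$) the contraction converts $h_v$ into a new external-external edge at $u_0$, so the result lies in $F_{|I|+1}$. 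In type (b), after deleting $f$ the vertex $v$ still has at least two residual incident edges (since $v$ has valence $\geq 4$ in $\psi(\gamma\otimes\eta_I)$); when $h_v$ is contracted each such residual edge either creates a double edge with an existing $h_{v'}$ or $h_j$ (so $\oslash$ is undefined) or a new external-external edge at $u_0$ (so the result lies in $F_{|I|+1}$). Hence types (a) and (b) vanish in $E_0$.

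Type (c) supplies the surviving contribution: deleting $h_v$ breaks the only potential triangle of $\psi(\gamma)$ through $u_0$ containing $e$, so $\oslash(e,h_v)$ is defined precisely when $e$ is Kontsevich-contractible in $\gamma$. A direct inspection of the construction of $\psi$ then shows $\psi(\gamma\otimes\eta_I)\oslash(e,h_v)=\pm\psi(\gamma/e\otimes\eta_I)$, still at filtration level $|I|$. Summing over all Kontsevich-contractible edges $e$ of $\gamma$, one per internal edge (for the chosen orientation) and one per boundary edge, yields $\pm\psi(d_K\gamma\otimes\eta_I)$. The principal obstacle is sign tracking: one must verify that applying $\iota_{h_v}$ (for the deletion) followed by $\iota_v\circ\iota_e$ (for the contraction) to the projective orientation $x\wedge h_1\wedge\cdots\wedge h_k\wedge(h_v\wedge v)_{v\in V_{int}}$ of $\psi(\gamma\otimes\eta_I)$ reproduces the affine orientation of $\gamma/e$ wedged with the same $h_j$'s and the remaining $h_{v'}\wedge v'$ factors, with sign matching the Kontsevich term $\gamma/e$. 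Orientation independence of $d_{\proj}$ via the pinwheel relation lets one choose favourable internal-edge orientations on $\gamma$ to simplify this bookkeeping.
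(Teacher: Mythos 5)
Your proposal is correct and follows essentially the same route as the paper: the same filtration by the number of edges joining $u_0$ to other external vertices (equivalently by $\eta$-degree), the same reduction to showing $d\psi \equiv \psi\, d$ modulo higher filtration, and the same case analysis identifying the pairs $(e,h_v)$ with $e$ an edge of $\gamma$ as the only contributions surviving in $E_0$. The only differences are cosmetic --- you slice the cases by whether $e,f$ are original or added edges rather than by the position of $e$, and the orientation bookkeeping you defer is carried out explicitly in the paper's Case 1.
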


\begin{proof}
  Let us filter $\aff_{n-1}=\kont_{n-1} \otimes
  \Lambda(\eta_1,\dots,\eta_{n-1})$ so that $F_k(\aff_{n-1})$ is spanned
  by elements $\gamma \otimes \omega$ where $\omega$ is a product of
  at least $k$ distinct generators $\eta_i$.  The linear isomorphism
  $\psi\colon \aff_{n-1} \to \proj_n$ respects the filtrations.  We show
  that if $\gamma\otimes \omega \in F_k(\aff_{n-1})$ then
  \[
  d\psi(\gamma\otimes \omega) =\psi(d\gamma \otimes
  \omega)+\epsilon(\gamma\otimes \omega),
  \]
  where the error term, $\epsilon(\gamma\otimes \omega)$, is in
  strictly higher filtration, i.e. $\epsilon(\gamma\otimes \omega) \in
  F_{k+1}(\proj_n)$.  This will prove the lemma.  Recall that
  $\psi(\gamma\otimes \omega)$ has an edge from the external vertex
  $u_0$ to each internal vertex $v$; let $h_v$ denote this edge.  The
  differential of $\psi(\gamma \otimes \omega)$ is a sum of terms of
  the form $\psi(\gamma \otimes \omega) \oslash (e,f)$ --- we now
  consider the various possible positions of the pair of adjacent
  edges $(e,f)$ in $\psi(\gamma \otimes \omega)$.

  Case 1: $e$ is an internal edge from $v_1$ to $v_2$.  If $f$
  connects $v_2$ to the external vertex $u_0$, then
  $\psi(\gamma\otimes \omega) \oslash (e,f)= \psi(\gamma/e \otimes
  \omega)$.  To see that the orientations agree, suppose $\gamma$
  has affine orientation $x$. Then $\psi(\gamma\otimes \omega)$ has
  projective orientation
  \[
  x \wedge (\omega) \wedge (h_v \wedge v)_{v\in V_{int}}
 = x \wedge (\omega) \wedge f\wedge v_2 \wedge (h_v \wedge v)_{v\neq v_2}
  \]
  where $(\omega)$ here stands for a wedge product of the boundary
  edges incident at $u_0$ corresponding to the class $\omega$, and
  $\psi(\gamma \otimes \omega) \oslash (e,f)$ has projective
  orientation
  \[
\iota_e x \wedge (\omega) \wedge (h_v \wedge v)_{v\neq v_2},
 \]
 which is precisely the projective orientation of $\psi(\gamma/e
 \otimes \omega)$.  If $f$ connects $v_2$ to a vertex that is not
 $u_0$ then $\psi(\gamma) \oslash (e,f) =0$ because this graph has a
 double edge coming from the edges $u_0 v_1$ and $u_0 v_2$ in
 $\psi(\gamma)$.

  Case 2: $e$ goes from an external vertex $x \neq u_0$ to an internal
  vertex $v$. If $f=h_v$ is the edge from $u_0$ to $v$, then
  $\psi(\gamma\otimes \omega) \oslash (e,f) = \psi(\gamma/e \otimes
  \omega)$ as in case 1.  The remaining possibilities for the position
  of $f$ will contribute only to the error term
  $\epsilon(\gamma\otimes \omega)$.  If $f \neq h_v$ then
  $\psi(\gamma\otimes \omega)\oslash (e,f)$ is in strictly higher
  filtration because it has an additional external edge between $x$
  and $u_0$.

  Case 3: $e$ goes from $u_0$ to an internal vertex $v$.  The internal
  vertices are at least 4-valent, so there are at least 3 edges
  connecting $v$ to vertices $a,b,c$ other than $u_0$.  We may assume
  that $f$ is not the edge from $v$ to $a$.  If $a$ is internal then
  $\psi(\gamma\otimes \omega) \oslash (e,f)=0$ because it has a double
  edge coming from the edges from $a$ to $v$ and from $a$ to $u_0$ in
  $\psi(\gamma\otimes \omega)$.  If $a$ is external then
  $\psi(\gamma\otimes \omega) \oslash (e,f)$ has one more edge than
  $\psi(\gamma\otimes \omega)$ connecting $a$ to $u_0$ and thus lives
  in higher filtration.
\end{proof}

\begin{proof}[Proof of Theorem \ref{projective-graphs-resolution}]
  By Lemma \ref{iso} the $E_1$-term is $E_1(\proj_n) \cong
  H^*(\aff_{n-1})$.  On the other hand by definition of the affine
  graph complex (Definition \ref{agc}), and since Kontsevich's graph
  complex $\kont_{n-1}$ is a resolution of $H^*(D_2(n-1))$,
  \begin{align*}
    H^*(\aff_{n-1}) & \cong 
                    H^*(\kont_{n-1}) \otimes \Lambda(\eta_1,\dots,\eta_{n-1}) \\
                 & \cong
                    H^*(D_2(n-1)) \otimes \Lambda(\eta_1,\dots,\eta_{n-1}) \\
                 & \cong H^*(\M_n).
  \end{align*}
  We claim that the spectral sequence collapses at the $E_1$-term.
  The projection $\proj_n \to H^*(\M_n)$ is surjective in cohomology
  because the forms $\alpha_{ij}$ generate the cohomology ring of
  $\M_n$ and $\alpha_{ij}$ is the image of the graph with a single
  edge between external vertices labelled $i$ and $j$. Thus the dimension
  of $H^*(\proj_n)$ is not smaller than the dimension $H^*(\M_n)$. The
  isomorphism $E_1(\proj_n) \cong H^*(\M_n)$ and finite dimensionality
  imply that $E_1(\proj_n) \cong E_\infty(\proj_n)$ and $q^*\colon H^*(\proj_n)
  \stackrel{\cong}{\to} H^*(\M_n)$ is an isomorphism of cyclic
  cooperads.
\end{proof}

\section{The Kontsevich Integral from graphs to forms}\label{int-section}

In this section we construct a quasi-isomorphism
\[
\I\colon \proj_V \to \Omega_{PA}^*(\M_V),
\]
where $\Omega_{PA}^*$ is the complex of ``piecewise semi-algebraic
forms''.  The idea is as follows.  A graph $\gamma=(V, V_{int},E)$
determines a differential form on $\M_{V\sqcup V_{int}}$ given by
multiplying $\alpha_{ij}$ 1-forms corresponding to the edges, and one
then integrates this form along the fibres of the bundle $\M_{V \sqcup
  V_{int}} \to \M_{V}$ to obtain a form on $\M_V$.  Both the integrand
and the integration map have a sign ambiguity, but if $\gamma$ has a
projective orientation then these sign ambiguities are resolved.  The
differential form that results is generally \emph{not} smooth, and so
we must pass to the semi-algebraic setting in which the Kontsevich
integral is well-defined.

\subsection{PA forms and fibrewise integration}
Given a semi-algebraic manifold, the complex of \emph{minimal} forms,
$\Omega^*_{min}(X)$, is spanned by elements
\[
f_0 \cdot df_1 \wedge \cdots \wedge df_k,
\]
where the $f_i$ are semi-algebraic functions.  The complex of $PA$
forms, $\Omega_{PA}^*(X)$, consists of those forms which can be
written as a pushforward of a minimal form.  When $X$ is smooth, the
commutative differential graded algebra of $PA$ forms is
quasi-isomorphic to the ordinary smooth forms.

We first recall some facts about semi-algebraic fibrewise
integration.  Suppose $\pi\colon E \to B$ is a semi-algebraic fibre
bundle with $n$-dimensional compact fibres and equipped with a fibrewise
orientation.  Integration of forms fibrewise defines a map (see
\cite[Section 8]{HLTV}) 
\[
\pi_!\colon \Omega^*_{min}(E) \to \Omega^{*-n}_{PA}(B).
\]
In particular, if $\omega \in \Omega^n_{min}(E)$ restricts to a volume
form on each fibre then $\pi_! \omega$ is the semi-algebraic function
on $B$ which sends $b \in B$ to the volume of the fibre over $b$.

We shall need \emph{iterated} fibrewise integrals at several points in
this section.  As the theory is developed in \cite{HLTV}, the
pushforward of a minimal form is a $PA$ form by definition, and the
pushforward of a $PA$ form is not defined.  Note that the minimal
forms constitute a sub-CDGA of all $PA$ forms.
\begin{proposition} \label{twosteps}
If $p\colon E \to B$ and
$q\colon B \to X$ are semi-algebraic bundles with compact oriented
fibres, and $\xi$ is a minimal form on $E$ such that $p_! \xi$ is a
minimal form on $B$, then $q_! (p_! \xi)$ is defined and 
\[
q_! (p_! \xi) = (q\circ p)_! \xi
\] 
\end{proposition}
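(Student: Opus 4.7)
The plan is to reduce Proposition~\ref{twosteps} to a Fubini-style statement for semi-algebraic fibre integration. The left-hand side $q_!(p_!\xi)$ is defined since $p_!\xi$ is minimal by hypothesis, so the pushforward under $q$ makes sense within the HLTV framework. The right-hand side is defined once one observes that $q\circ p\colon E\to X$ is itself a semi-algebraic bundle with compact oriented fibres: semi-algebraicity and the bundle property follow from the fact that a composite of semi-algebraic bundle projections is a semi-algebraic bundle projection (HLTV Prop. 8.5), and the orientation convention from Section~\ref{orientations} equips it with the fibrewise orientation form $p^*\Omega_q\wedge \Omega_p$, where $\Omega_p, \Omega_q$ are the given fibrewise orientation forms.

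The equality is checked pointwise on $X$. Fix $x\in X$, write $B_x \coloneqq q^{-1}(x)$ and $E_x \coloneqq (q\circ p)^{-1}(x) = p^{-1}(B_x)$, and note that $p$ restricts to a semi-algebraic bundle $p_x\colon E_x \to B_x$ with the same compact oriented fibre as $p$. The first key step is a base-change identity: $(p_!\xi)|_{B_x} = (p_x)_!(\xi|_{E_x})$. Inspecting the construction of semi-algebraic fibrewise integration in HLTV, this is essentially a pointwise statement at each point of $B_x$, but one must verify that restriction to a fibre commutes with the integration operation and that the orientations induced on the fibres agree; since $p_!\xi$ is assumed minimal, both sides are minimal forms on $B_x$ and the identification is unambiguous.

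Granted this base change, the value of $q_!(p_!\xi)$ at $x$ is $\int_{B_x}(p_x)_!(\xi|_{E_x})$, while the value of $(q\circ p)_!\xi$ at $x$ is $\int_{E_x}\xi|_{E_x}$; for $\xi$ of top fibrewise degree these are equal by the classical Fubini theorem applied to the bundle $p_x$ with the composite orientation form $p_x^*(\Omega_q|_{B_x})\wedge \Omega_p|_{E_x}$. For minimal forms of lower degree, one reduces to the top-degree case by contracting with local semi-algebraic vector fields on $X$ lifted to $E$, using the standard naturality of fibrewise integration under such contractions. The main obstacle is the base-change identity together with careful tracking of signs through the orientation conventions of Section~\ref{orientations}; the rest is routine bookkeeping within the HLTV framework.
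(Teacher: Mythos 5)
Your overall strategy --- check the identity fibrewise over $X$ and invoke Fubini --- is the natural one in the smooth category, but it does not match how integration along the fibre is actually set up in the semi-algebraic framework, and the step you use to handle forms below the top fibrewise degree is a genuine gap. In \cite{HLTV} the pushforward $\pi_!\mu$ of a minimal form is not defined by a pointwise contraction formula; as a PA form it is characterised by its integrals over semi-algebraic chains, via the identity $\int_\gamma \pi_!\mu = \int_{E|_\gamma}\mu$ for semi-algebraic chains $\gamma$ in the base. Your proposed reduction to the top-degree case by ``contracting with local semi-algebraic vector fields on $X$ lifted to $E$, using the standard naturality of fibrewise integration under such contractions'' presupposes that $q_!$ and $(q\circ p)_!$ are computed by the classical lifted-contraction formula. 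That naturality is not established for PA forms; semi-algebraic maps are not everywhere differentiable, so semi-algebraic vector fields and their lifts are delicate and contracting a minimal form with such a field need not yield a minimal form; and a PA form is not determined by naive pointwise evaluation at every point of $X$. To salvage your route you would first have to show that the HLTV pushforward agrees with naive fibre integration on a dense semi-algebraic open subset where everything is smooth, and that two PA forms agreeing there coincide --- none of which your sketch addresses.

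The argument the paper intends (it defers to \cite[Prop 8.11]{HLTV} and says the proof is similar) works at the level of chains and is nearly tautological: for any semi-algebraic chain $\gamma$ in $X$ one has $\int_\gamma q_!(p_!\xi) = \int_{B|_\gamma} p_!\xi$ --- this is where the hypothesis that $p_!\xi$ is minimal is used, since the defining identity for $q_!$ applies only to minimal integrands --- and then $\int_{B|_\gamma}p_!\xi = \int_{E|_{B|_\gamma}}\xi = \int_{E|_\gamma}\xi = \int_\gamma (q\circ p)_!\xi$, the only real content being that $E|_{B|_\gamma}$ and $(q\circ p)^{-1}(\gamma)$ agree as oriented chains under the composite orientation convention $p^*\Omega_q\wedge\Omega_p$; since PA forms are determined by their integrals over semi-algebraic chains, the identity follows. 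Your observations that $q\circ p$ is again a semi-algebraic bundle with compact oriented fibres and that the minimality hypothesis on $p_!\xi$ is exactly what makes the left-hand side defined are correct and worth keeping; the pointwise Fubini computation should be replaced by this chain-level one.
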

This is a slight generalisation of \cite[Prop 8.11]{HLTV} and has 
a similar proof.

Given any form $\xi \in \Omega^*_{min}(B)$, one has the push-pull
formula (\cite[Prop. 8.13]{HLTV}),
\begin{equation}\label{pushpull}
\pi_! (\pi^* \xi \cdot \omega) = \xi \cdot  \pi_! \omega. 
\end{equation}
Now recall the fibrewise Stokes formula.
\begin{theorem} \cite[Prop. 8.10]{HLTV} \label{Stokes}
  Let $\pi\colon E \to B$ be a semi-algebraic bundle of oriented
  $k$-manifolds with boundary, with $\pi|_{\partial}
  \colon \partial E \to B$ denoting the associated 
  fibrewise boundary bundle.  Then, giving the
  boundary the induced orientation,
\[
d ( \pi_! \xi ) = \pi_! (d \xi) + (-1)^{\mathrm{deg} \xi - k}(\pi|_{\partial})_! \xi,
\]
for any form $\xi \in \Omega^*_{min}(E)$.
\end{theorem}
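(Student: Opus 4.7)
The plan is to reduce the fibrewise Stokes formula to the classical smooth version by working in local semi-algebraic trivialisations, then patch everything together and track the sign bookkeeping carefully.

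First I would use the fact that a semi-algebraic fibre bundle admits a finite semi-algebraic stratification of the base over which the bundle trivialises, together with the observation that both $d$ and $\pi_!$ are local on $B$. This reduces the claim to a trivial model $\pi\colon U \times F \to U$ where $F$ is a compact oriented semi-algebraic manifold with boundary. Minimal forms are closed under restriction to semi-algebraic subsets, so $\xi|_{U\times\partial F}$ is again minimal and $(\pi|_\partial)_!\xi$ is well defined.

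Next, by linearity I would decompose $\xi$ according to fibre degree, writing each piece in local coordinates $(x,y)\in U\times F$ as $\xi_q = \sum_{J,L} f_{J,L}(x,y)\, dx^J \wedge dy^L$ with $|L|=q$. Only $q=k$ contributes to $\pi_!\xi$, and only $q\in\{k-1,k\}$ contributes to $\pi_!(d\xi)$ or to $(\pi|_\partial)_!\xi$. For the piece $\xi_k$, differentiation under the integral sign (legitimate because each $f_{J,L}$ is a semi-algebraic function on a compact fibre) gives $d(\pi_!\xi_k)=\pi_!(d\xi_k)$ with no sign. For the piece $\xi_{k-1}$, the only surviving part of $d\xi_{k-1}$ after fibre integration comes from the fibre derivatives; applying the ordinary Stokes formula along each fibre $F$ (a compact oriented smooth-with-corners manifold) converts the fibrewise integral of $d(g\,dy^{\hat\ell})$ into an integral of $g\,dy^{\hat\ell}$ over $\partial F$, which is precisely the contribution to $(\pi|_\partial)_!\xi$ once $\partial F$ is given the induced orientation. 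The sign $(-1)^{\deg\xi - k}$ is then produced by commuting the fibre $(k-1)$-form past the base $|J|$-form combined with the sign incurred in writing $dy_\ell\wedge dy^{\hat\ell}$ as $\pm dy^{\mathrm{full}}$; an explicit tally using $|J|=\deg\xi-k+1$ matches the stated coefficient.

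Finally I would verify that the local expressions obtained on different trivialising strata agree on overlaps, which is automatic because both sides of the desired equality are defined globally on $B$ and coincide on the open dense smooth locus of each stratum; the equality then extends to all of $B$ because $PA$ forms are determined by their values off a lower-dimensional semi-algebraic subset. The main obstacle is the semi-algebraic foundation rather than the geometric content: one must know that semi-algebraic bundles are piecewise trivial, that minimal forms are preserved by restriction to the fibrewise boundary, and that differentiation under the integral sign is valid for semi-algebraic integrands over compact semi-algebraic fibres. These are precisely the ingredients developed in \cite{HLTV}, and once they are in hand the sign computation above completes the argument.
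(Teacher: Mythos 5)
This statement is not proved in the paper at all: it is imported verbatim as \cite[Prop.~8.10]{HLTV}, so there is no in-paper argument to compare yours against. Judged on its own terms, your sketch is the correct heuristic picture (localise over the base, split by fibre degree, apply Stokes fibrewise, track the sign from commuting the fibre form past the base form), and the sign bookkeeping you describe does reproduce the stated $(-1)^{\deg\xi-k}$.

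The genuine gap is in the step you label as unproblematic: ``differentiation under the integral sign (legitimate because each $f_{J,L}$ is a semi-algebraic function on a compact fibre)'' and ``applying the ordinary Stokes formula along each fibre.'' A semi-algebraic function is in general only continuous, smooth merely on an open dense semi-algebraic subset, so the coefficients of a minimal form are not $C^1$ and the pointwise coordinate computation $d(\pi_!\xi_k)=\pi_!(d\xi_k)$ cannot be carried out literally; moreover $\pi_!\xi$ is only a $PA$ form, whose exterior derivative is itself defined weakly. In \cite{HLTV} both $d$ on $PA$ forms and $\pi_!$ are defined via semi-algebraic currents and the pairing with (strongly generic) semi-algebraic chains, and Prop.~8.10 is proved by combining the Stokes formula for semi-algebraic chains (i.e.\ the identity $\partial$ of the fibre current equals the current of the fibrewise boundary) with the compatibility of that boundary operator with the bundle structure. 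In other words, ``ordinary Stokes on each fibre'' for these non-smooth integrands is not an available classical fact but is essentially the content being established; your outline defers exactly the load-bearing steps to the reference. The reduction to local trivialisations and the extension from a dense open locus are fine, but the core of the proof has to be rebuilt in the language of currents rather than coordinates.
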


\subsection{Definition of the integration map $\I$}
We first define $\I$ on the subcomplex $\proj_V^0$
of graphs with no internal vertices.  Let $\gamma \in \proj_V^0$ be
such a graph with edge set $E$ and orientation ray in $\det(E)$
spanned by $e_1 \wedge \cdots \wedge e_k$.  An edge $e$ between
vertices $u$ and $v$ determines a semi-algebraic minimal 1-form
\[
\alpha_e \coloneqq \alpha_{u v} = \pi_{u v}^* d\theta \in
\Omega^1_{min}(\M_V),
\]
where $\pi_{u v}\colon \M_{V} \to \M_{\{u,v\}} \cong S^1$ forgets all
marked points except $u$ and $v$.  Define
\[
\I(\gamma) \coloneqq \alpha_{e_1} \wedge \cdots \wedge \alpha_{e_k}
\in \Omega^*_{min}(\M_V).
\]

Recall that (the equivalence class of) a projectively oriented graph $\gamma$ with internal vertex
set $V_{int}$ and external vertex set $V$ can be written as $\pi^{V_{int}}_r(\widetilde{\gamma}) \in \proj_V$
for some ray $r \subset \det(V_{int})$ and graph
$\widetilde{\gamma}\in \proj_{V\sqcup V_{int}}^0$.  
We then define
\[
\I(\gamma) = \I(\pi^{V_{int}}_r (\widetilde{\gamma})) \coloneqq
\frac{1}{2^{|V_{int}|}} \pi^{V_{int}}_! \I(\widetilde{\gamma}),
\]
where the fibrewise orientation of $\pi^{V_{int}}\colon \M_{V\sqcup
  V_{int}} \to \M_V$ is determined by the ray $r$ as described in
section \ref{orientations}.  We prove in Lemma
\ref{Kont-integral-properties} below that $\I$ respects the relations
$(R1-4)$ of Definition \ref{defproj}, and hence it descends to a map
from $\proj_V$, that we denote by the same name.

\begin{lemma}\label{Kont-integral-properties}
The Kontsevich Integral satisfies
\begin{itemize}
\item[(i)] $\I(\gamma) = - \I\left(\overline{\gamma}\right)$.
\item[(ii)] $\I$ vanishes on graphs having an internal vertex of
  valence $\leq 3$.
\item[(iii)] $\I$ vanishes on graphs which have a connected component
  containing at most one vertex in $V$ and at least
  one internal vertex.
\item[(iv)] The pinwheel ideal is contained in the kernel of $\I$.
\end{itemize}
Hence we have a well-defined morphism $\I\colon \proj_V \to \Omega_{PA}^*(\M_V)$
of graded algebras.
\end{lemma}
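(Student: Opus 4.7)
My plan is to analyze, for each of (i)--(iv), the interaction of the integrand $\omega = \prod_{e \in E} \alpha_e$ with the fibre structure of $\pi^{V_{int}}\colon \M_{V \sqcup V_{int}} \to \M_V$. Part (i) should follow immediately from the definition: reversing the orientation ray either flips the ordering of the wedge defining $\omega$ on $\proj^0_V$, or flips the fibrewise orientation of $\pi^{V_{int}}$ via the conventions of section \ref{orientations}, so in either case $\I$ picks up a global sign.

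For (iv) I plan to identify the signed pinwheel sum $\sum_{e\in E(v)} \I(\gamma\smallsetminus e)$ with the fibrewise integral of $\iota_{\partial_{\theta_v}}\omega$. The decisive observation is that each edge $e$ incident at the internal vertex $v$ has $\alpha_e$ of the local form $d\theta_v + (\text{terms independent of } \theta_v)$, and so satisfies $\iota_{\partial_{\theta_v}}\alpha_e = 1$, while edges away from $v$ do not involve $\theta_v$; a short sign computation then matches $\iota_{\partial_{\theta_v}}\omega$ precisely with the signed sum of the pinwheel integrands, the signs coming from the $\iota_e$ rule used to transport the projective orientation of $\gamma$ to that of $\gamma\smallsetminus e$. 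Factoring $\pi^{V_{int}}_!$ through the $S^1$-bundle $\pi^{\{v\}}_{\mathrm{dec}}$ parametrising $\theta_v$, and noting that $\iota_{\partial_{\theta_v}}\omega$ has no $d\theta_v$ component, the integral over this $S^1$-fibre must vanish.

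For (ii) I would split by the valence of $v$. If $v$ has valence $\leq 2$, then at most two $1$-forms of $\omega$ depend on the $3$-dimensional fibre data over $\pi^{\{v\}}$, so the partial pushforward vanishes for form-degree reasons. The case of valence $3$ is where I expect the main difficulty: the plan is to adapt Kontsevich's vanishing lemma to the framed setting by integrating out $\theta_v$ and the position of $v$, then exhibiting an involution of the fibre (for instance, reflecting the tangent ray at $v$ through the geodesic to one of its neighbours) that preserves the three edge forms up to a sign but reverses the fibrewise orientation, forcing the partial integral to cancel with itself. For (iii) I would use the push--pull formula \eqref{pushpull} together with Proposition \ref{twosteps} to isolate the contribution of the offending component as a scalar (or lower-degree) factor; this factor vanishes on form-degree grounds in most combinatorial configurations, using the valence-$\geq 4$ condition and the $3|V'|$-dimensional fibre, and in the remaining borderline cases by a symmetry argument inside the fibre --- rotation about the single external vertex when it is present, and rotation of the internal configuration when it is absent. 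Once (ii)--(iv) are in hand, $\I$ descends through (R1)--(R4) of Definition \ref{defproj}, and the resulting map is automatically multiplicative by the definition of $\I$ on $\proj^0_{V\sqcup V_{int}}$ combined with the compatibility of fibrewise integration with wedge products recorded in \eqref{pushpull}.
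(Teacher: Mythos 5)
Your overall strategy coincides with the paper's: (i) is immediate from the orientation conventions; (iv) is proved by showing the signed pinwheel integrand has no $d\theta_v$ component so that integrating out the ray decoration at $v$ kills it (your reformulation via $\iota_{\partial_{\theta_v}}$, using $\iota_{\partial_{\theta_v}}\alpha_e=1$ for $e$ incident at $v$, is exactly the paper's expansion $\alpha_{vu_i}=\eta_v+\eta_{u_i}-2\omega_{vu_i}$ followed by extraction of the coefficient of $\eta_v$); and (iii) is a symmetry/vanishing argument for components with at most one external vertex --- the paper simply integrates out the ray decorations first and then cites Kontsevich's vanishing lemma \cite[Lemma 6.4]{Ko1}, which is the rotation argument you sketch. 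I would not lean on ``form-degree grounds'' in (iii): the fibre integral is not taken component by component, so the degree count is not clean; the rotation symmetry is the actual content. You also leave implicit the check that each partial pushforward is again a \emph{minimal} form (needed to invoke Proposition \ref{twosteps}, since pushforwards of general $PA$ forms are undefined); the paper verifies this at each stage.

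The one step that would fail as written is your candidate involution in (ii). On the $3$-dimensional fibre of $\pi^{\{v\}}$ the form $\alpha_{uv}$ restricts to $\eta_v-2\omega_{uv}$ (the $\eta_u$ summand dies on the fibre). Reflecting only the tangent ray at $v$ sends $\eta_v\mapsto-\eta_v$ but fixes $\omega_{uv}$, which depends only on the positions; hence $\alpha_{uv}$ is not carried to $\pm\alpha_{uv}$ and no cancellation results. The involution must act on the position of the trivalent vertex as well as its ray: the paper places the three neighbours at $0,1,\infty$ and uses complex conjugation of the sphere, lifted to the unit tangent circle bundle. This is orientation-\emph{preserving} on the $3$-dimensional fibre (a reflection in the $2$-dimensional position factor times a reflection of the circle factor) and negates each of the three edge forms, hence negates their product, forcing the fibre integral of $\alpha_{ux}\alpha_{vx}\alpha_{wx}$ to vanish. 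With that correction (and the routine valence $\leq 2$ case by degree, as you note), your argument for (ii) goes through and the remainder of the proposal matches the paper.
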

\begin{proof}
  Statement (i) is trivial.  For (ii), suppose $\gamma$ has a trivalent internal
  vertex $x$ adjacent to vertices $u$, $v$ and $w$ as below.
  \begin{center}
  \input{trivalent.pst}  
  \end{center}
  By Proposition \ref{twosteps} one may integrate out the internal
  vertices by first integrating out $x$ and then integrating out the
  remaining internal vertices, i.e.
  \[
  \pi^{V_{int}}_! \I(\widetilde{\gamma}) = (-1)^{|V_{int}|-1}\pi^{V_{int} \smallsetminus \{x\}}_!
  \pi^{\{x\}}_! \I(\widetilde{\gamma}),
  \]
  where if $\pi^{V_{int}}$ has fibrewise orientation $r\subset
  \det(V_{int})$ then $\pi^{V_{int} \smallsetminus \{x\}}$ has
  fibrewise orientation $\iota_x r \subset \det(V_{int} \smallsetminus
  \{x\})$ and $\pi^{\{x\}}$ has its standard fibrewise orientation.
  This works because the integrand $\I(\widetilde{\gamma})$ is the
  product of $\alpha_{ux} \alpha_{vx} \alpha_{wx}$ with a minimal form
  $\beta$ that is independent of $x$, so by formula \eqref{pushpull},
  $(\pi^{\{x\}})_!\I(\widetilde{\gamma})$ is equal to the product of
  $\beta$ with the $PA$ form
  \begin{equation}\label{trivalent-integral}
  \pi^{\{x\}}_! (\alpha_{ux} \alpha_{vx} \alpha_{wx}), 
  \end{equation}
  and since the fibres of $\pi^{\{x\}}\colon \M_{V \sqcup V_{int}} \to
  \M_{V \sqcup V_{int} \smallsetminus \{x\}}$ have dimension 3,
  \eqref{trivalent-integral} is a semi-algebraic function on
  $\M_{V \sqcup V_{int} \smallsetminus \{x\}}$, and so
  $(\pi^{\{x\}})_!\I(\widetilde{\gamma})$ is minimal.  Putting
  $u$, $v$ and $w$ at $0$, $1$ and $\infty$ identifies each fibre with
  the complement of a codimension 1 semi-algebraic subset of the unit
  tangent circle bundle of the Riemann sphere punctured at $0$, $1$
  and $\infty$, $S(T(S^2 \smallsetminus \{0,1,\infty\}))$, and the
  differential form $\alpha_{ux} \alpha_{vx} \alpha_{wx}$ extends to
  $S(T(S^2 \smallsetminus \{0,1,\infty\}))$.  Complex conjugation is
  an orientation-preserving diffeomorphism of the tangent circle
  bundle, and it reverses the sign of $\alpha_{ux} \alpha_{vx}
  \alpha_{wx}$ since it reverses the sign of each of the three
  factors.  Hence the integral over each fibre is zero, and the
  fibrewise integral \eqref{trivalent-integral} is identically zero.

  Statement (iii) follows from Kontsevich's lemma on vanishing of integrals over
  configuration spaces \cite[Lemma 6.4]{Ko1}. We choose an external
  vertex as the point at infinity, and write a form
  $\I(\widetilde{\gamma})$ as polynomial in the forms
  $\omega_{ij}$ and $\eta_k$.  Integrate out the internal vertices by
  first integrating out the tangent ray decorations at the internal vertices,
  resulting in a $PA$ and in fact minimal form on a configuration
  space; what remains is a configuration space integral of the form
  that Kontsevich's lemma refers to.

  For (iv) We will in show that if
  $v$ is an internal vertex then integrating out the tangent ray
  decoration at $v$ sends the element
  \[
  \sum_{e\in E(v)} \I(\widetilde{\gamma} \smallsetminus e)
  \]
  to zero.  Write the orientation of $\widetilde{\gamma}$ as $\beta
  \wedge e_0 \wedge \cdots \wedge e_k$ where $E(v) = \{e_0, \ldots,
  e_k\}$ is the set of edges incident at $v$.  Let $u_k$ denote the opposite
  endpoint of $e_k$.  Then
  \begin{align*}
    \sum_{e\in E(v)} & \widetilde{\gamma} \smallsetminus e = 
    \beta \sum_{i=0}^k (-1)^{i} \alpha_{e_0} \alpha_{e_1} \cdots
    \widehat{\alpha_{e_i}} \cdots \alpha_{e_k} \\
    = & \beta(\eta_v + \eta_{u_1} -2\omega_{v u_1})
          \cdots (\eta_v + \eta_{u_k} -2\omega_{v u_k}) \\
    & +  \beta \eta_v \sum_{i=1}^k (-1)^{i} (\eta_v + \eta_{u_1}
    -2\omega_{v u_1}) \cdots \widehat{(\eta_v + \eta_{u_i} -2\omega_{v
        u_i})}
    \cdots (\eta_v + \eta_{u_k} -2\omega_{v u_k}),
  \end{align*}
  and integrating out the decoration at $v$ extracts the coefficient of
  $\eta_v$.  It is not hard to see that the coefficient of $\eta_v$ is
  in fact zero.
\end{proof}

\subsection{Differentials of graph forms}
Here we shall prove the following.
\begin{theorem}\label{differential-of-graph}
  The linear map $\I\colon \proj_V \to \Omega_{PA}^*(\M_V)$ commutes with the
  differentials and is thus a morphism of differential graded algebras
\end{theorem}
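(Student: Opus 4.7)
My plan is to prove the theorem via the fibrewise Stokes formula (Theorem \ref{Stokes}), matching boundary integrals of $\I(\widetilde{\gamma})$ with the combinatorial differential $d\gamma$. Write $\gamma = \pi^{V_{int}}_r(\widetilde{\gamma})$ with $\widetilde{\gamma}\in\proj^0_{V\sqcup V_{int}}$; since $\I(\widetilde{\gamma}) = \bigwedge_{e}\alpha_e$ is a wedge of closed $1$-forms it is itself closed, and Theorem \ref{Stokes} applied to $\pi^{V_{int}} \co \M_{V\sqcup V_{int}} \to \M_V$ reduces to
\[
d\I(\gamma) \;=\; \frac{\pm 1}{2^{|V_{int}|}}\,(\pi^{V_{int}}|_\partial)_!\,\I(\widetilde{\gamma}),
\]
with the sign controlled by $|E|$ and $3|V_{int}|$. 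The entire content of the theorem is then the evaluation of this boundary term and its identification with $\I(d\gamma)$.

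By Lemma \ref{fibrewise-boundary} the fibrewise boundary decomposes into strata $\partial_{U_1,U_2}$ indexed by unordered partitions $V\sqcup V_{int}=U_1\sqcup U_2$ with $|U_i|\geq 2$ and $|U_1\cap V|\leq 1$. Following Kontsevich, I would call the strata with $|U_1|\geq 3$ \emph{hidden} and those with $|U_1|=2$ \emph{principal}. For hidden faces the plan is to integrate out the tangent-ray decorations at the internal vertices inside $U_1$ first, reducing the integrand to an ordinary configuration-space integral; Kontsevich's vanishing lemma for hidden faces (cf.\ \cite[Lemma 6.4]{Ko1}, invoked already in the proof of Lemma \ref{Kont-integral-properties}(iii)) then kills every such contribution. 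Some care is needed to propagate the framings through, but the strategy is standard.

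The essential new work is the principal-face analysis. On $\partial_{\{p,q\},U_2}$ a neighbourhood of the node carries a distinguished $S^1$ factor, the unit circle of the tensor tangent ray $S(\mathcal{L}_x\otimes\mathcal{L}_y)$; write $\theta$ for its angular coordinate. The heart of the calculation is to compute the restrictions of the forms $\alpha_{ij}$ to this boundary explicitly. Using the identity $\alpha_{ij}=\eta_i+\eta_j-2\omega_{ij}$ relating the framed generators to the Arnold and rotation classes, together with the standard boundary behaviour of $\omega_{ij}$ (collapse of source and target onto the new node), I would derive formulas of the type
\[
\alpha_{ij}|_{\partial_{\{p,q\},U_2}} \;=\; \alpha_{\bar\imath\bar\jmath}|_{\text{base}} \;+\; c_{ij}\,d\theta,
\]
where $\bar\imath,\bar\jmath$ denote the projections of $i,j$ to the bubbled factors and $c_{ij}\in\{0,\pm1\}$ encodes whether $i,j$ lie in $\{p,q\}$. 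Multiplying such restrictions and integrating over the $S^1$-fibre extracts the coefficient of $d\theta$; because $d\theta\wedge d\theta=0$ this coefficient is a sum of terms in each of which exactly one edge $e$ (incident to the common collapsing vertex and providing the $d\theta$) is ``contracted'' onto the new vertex and exactly one further incident edge $f$ has its $d\theta$-correction consumed and must therefore be ``deleted'', with all other edges restricting cleanly. After performing the remaining integrations over the residual bubble moduli one obtains $\I(\gamma\oslash(e,f))$, with prefactors arranged (together with the $\int_{S^1}d\theta=2\pi$ normalisation and the $\tfrac{1}{2^{|V_{int}|}}$ in the definition of $\I$) to give coefficient one.

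The main obstacle is precisely this explicit derivation of the boundary restriction formulas and the combinatorial bookkeeping it entails. One has to verify that the correspondence between principal-face contributions and the pairs $(e,f)$ is bijective with the correct coefficient, and that all signs --- arising from fibrewise Stokes, from the orientation conventions of Section \ref{orientations}, from the choices of heads and tails of internal edges in the definition of $d$, and from the standard orientation of $S^1$ --- conspire to match the signs in the projective graph differential. This framed boundary analysis is the genuinely new technical ingredient beyond Kontsevich's original unframed formality argument, and is where the edge-deletion part of the differential $\oslash(e,f)$ manifests itself on the analytic side.
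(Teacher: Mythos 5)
Your proposal is correct and follows essentially the same route as the paper: fibrewise Stokes applied to $\pi^{V_{int}}$, vanishing of the hidden faces ($|U_1|\geq 3$) via Kontsevich's vanishing lemma after first integrating out the tangent-ray decorations, and identification of the principal-face ($|U_1|=2$) contributions with the contract-one-edge/delete-one-edge terms $\gamma\oslash(e,f)$. The part you defer as "the main obstacle" is exactly what the paper supplies through Lemmas \ref{key-relation}, \ref{volume-forms} and \ref{boundary-expansion} (the boundary identity $\alpha_{ux}-\alpha_{uw}=\alpha_{vx}-\alpha_{vw}$ on $\partial_{uv}$, the fibrewise volume form $(1/2)\alpha_{wv}\alpha_{vu}$, and the inductive expansion of $\alpha_{1v}\cdots\alpha_{kv}$), which together give $(\pi^{\{v\}}|_{\partial_{uv}})_!\I(\widetilde{\gamma})=-2\sum_{f}\I(\widetilde{\gamma}\oslash(e,f))$ with the normalisations matching the $2^{-|V_{int}|}$ in the definition of $\I$.
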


Let $\gamma \in \proj_V$ be a projectively oriented graph.  We will
prove this theorem by using the Stokes formula to compute the exterior
derivative of the PA form $\I(\gamma)$.  Write $\gamma =
\pi_r^{V_{int}}(\widetilde{\gamma})$.  By the fibrewise Stokes formula
(Theorem \ref{Stokes}),
\begin{equation}\label{stokes-graph-forms}
  d\I(\gamma) = d\left( \frac{1}{2^{|V_{int}|}}\pi^{V_{int}}_!
    \I(\widetilde{\gamma}) \right) =
  \frac{(-1)^{|E|+|V_{int}|}}{2^{|V_{int}|}}
  \sum_{U_1,U_2} \left( \pi^{V_{int}}|_{\partial_{U_1,U_2}} \right)_! \I(\widetilde{\gamma}),
\end{equation}
since $\I(\widetilde{\gamma})$ is closed.  We can factor
$\pi^{V_{int}}|_{\partial_{U_1,U_2}}$ into the composition of first
forgetting the points in $U_1 \cap V_{int}$ and then forgetting the
remaining internal vertices:
\[
\partial_{U_1,U_2} \xrightarrow{\pi^{U_1 \cap
    V_{int}}|_{\partial_{U_1,U_2}}} \M_{V \cup U_2}
\xrightarrow{\pi^{U_2 \cap V_{int}}} \M_V.
\] 
By Lemma \ref{fibrewise-boundary}, the fibrewise boundary component
$\partial_{U_1,U_2}$ is the (closure of the) locus where a node
separates the sphere into two lobes $S_1, S_2$, with the points
labelled by $U_i$ on the lobe $S_i$.  The map $\pi^{U_1 \cap
  V_{int}}|_{\partial_{U_1,U_2}}$ forgets the first lobe if $U_1 \cap
V = \emptyset$, and collapses it to a single marked point if $U_1$
contains an external vertex; hence it is a semi-algebraic fibre
bundle, and in fact it is trivial.

\begin{lemma}
  If $|U_1| \geq 3$ then the corresponding term in the Stokes formula vanishes:
  \[
  \left(\pi^{V_{int}}|_{\partial_{U_1,U_2}}\right)_!
  \I(\widetilde{\gamma}) = 0.
  \]
\end{lemma}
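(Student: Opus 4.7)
The plan is to use iterated fibrewise integration (Proposition \ref{twosteps}) to factor out the lobe 1 contribution and then invoke the Kontsevich-type vanishing of Lemma \ref{Kont-integral-properties}(iii) on the smaller moduli space of the first lobe.

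First I would note that the map $\pi^{V_{int}}|_{\partial_{U_1,U_2}}$ factors as the composition $p_2 \circ p_1$ already described just above the statement, so by Proposition \ref{twosteps} it suffices to show $(p_1)_! \bigl(\I(\widetilde{\gamma})|_{\partial_{U_1,U_2}}\bigr) = 0$.  Here $p_1$ is a trivial semi-algebraic fibre bundle whose fibre over a point of $\M_{V \cup U_2}$ is the lobe 1 moduli space $\M_{U_1 \sqcup \{*_1\}}$, together with the $S^1$-factor coming from the circle bundle structure of the face (the gluing angle between the tangent rays at the node).

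Next I would exploit the canonical identification of $\partial_{U_1,U_2}$ with the unit circle bundle of $\mathcal{L}_{*_1} \otimes \mathcal{L}_{*_2}$ over $\M_{U_1 \sqcup \{*_1\}} \times \M_{U_2 \sqcup \{*_2\}}$ in order to decompose each generator $\alpha_{ij}$ on the boundary: if $i,j$ are both in $U_1$ (resp.\ both in $U_2$) then $\alpha_{ij}$ pulls back to the corresponding generator on that lobe, and if $i \in U_1, j \in U_2$ then $\alpha_{ij}$ splits as $\alpha_{i,*_1} + \alpha_{*_2,j}$ up to a contribution from the circle direction, by additivity of the angle across the node.  Expanding $\I(\widetilde{\gamma})|_{\partial_{U_1,U_2}}$ as a polynomial in these 1-forms produces a finite sum $\sum_k \omega_{1,k} \wedge \omega_{2,k} \wedge \omega_{\circ,k}$ where $\omega_{1,k}$ is a wedge of $\alpha$-forms pulled back from $\M_{U_1 \sqcup \{*_1\}}$ corresponding to the edges of a subgraph $\gamma_{1,k} \in \proj^0_{U_1 \sqcup \{*_1\}}$, $\omega_{2,k}$ is the analogous lobe 2 form, and $\omega_{\circ,k}$ records the circle direction.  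By the push-pull formula $(p_1)_!$ separates these factors, and the integral over the lobe 1 fibre is, up to a sign and the standard normalisation constant $2^{-|U_1|}$, the Kontsevich integral $\I(\pi^{U_1}_{r_1}(\gamma_{1,k}))$ viewed as a real number on $\M_{\{*_1\}}$.

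Since $|U_1| \geq 3$, the graph $\pi^{U_1}_{r_1}(\gamma_{1,k})$ has at least three internal vertices and only $*_1$ as external vertex, so every connected component has at most one external vertex, and any component meeting $U_1$ has at least one internal vertex; Lemma \ref{Kont-integral-properties}(iii) then forces $\I(\pi^{U_1}_{r_1}(\gamma_{1,k})) = 0$ term by term, giving the desired vanishing.  The main obstacle is the bookkeeping in the second step: making the decomposition of $\alpha_{ij}$ across the boundary face completely rigorous (in particular, isolating the circle direction contribution from the tangent rays at the node), and checking that the orientation conventions of Section \ref{orientations} together with the $2^{-|V_{int}|}$ normalisation combine so that the lobe 1 fibre integral really is of the form to which part (iii) of Lemma \ref{Kont-integral-properties} applies.
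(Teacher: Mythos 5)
Your overall strategy---factor the pushforward through the lobe-1 integration, split the integrand across the node, and kill the lobe-1 factor by a Kontsevich-type vanishing---is the same as the paper's. But there is a genuine gap in the last step. By Lemma \ref{fibrewise-boundary} the partitions $U_1\sqcup U_2$ that occur are only required to satisfy $|U_1\smallsetminus V_{int}|\leq 1$, so $U_1$ may contain exactly one \emph{external} vertex $w$ of $\gamma$. In that case your lobe-1 graph has \emph{two} external vertices, $w$ and $*_1$; a component may contain both, and Lemma \ref{Kont-integral-properties}(iii) does not apply to it. Moreover the identification of the fibre integral with $\I(\pi^{U_1}_{r_1}(\gamma_{1,k}))$ is then inconsistent: $w$ lies in the base $\M_{V\cup U_2}$ of $p_1$, so its position and decoration are \emph{not} integrated out, and you cannot declare all of $U_1$ internal. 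The paper handles both cases uniformly by working in the affine presentation relative to a point $x\in U_2$ placed at infinity: after integrating out the decorations of the points of $U_1\cap V_{int}$, the boundary identity $\omega_{ab}=\omega_{qb}$ (for $a\in U_1$, $b\in U_2$, $q$ the node) pushes every cross-lobe factor onto the second lobe, leaving a factor $\beta_1$ that is a monomial of $\omega$-forms among the points of $U_1$ and $q$ alone; Kontsevich's vanishing lemma \cite[Lemma 6.4]{Ko1} then kills the integral over the positions of the $|U_1\cap V_{int}|\geq 2$ forgotten points. Your argument as written covers only the case $U_1\subset V_{int}$.

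A secondary problem, which you partly flag yourself, is the push--pull step. The lobe-2 forms $\alpha_{*_2,j}$ produced by splitting a cross-lobe edge depend on the position and decoration of the node, which are fibre coordinates of $p_1$ (when $U_1\cap V=\emptyset$ the fibre of $p_1$ is not $\M_{U_1\sqcup\{*_1\}}$ times a circle: it also contains the two-dimensional choice of attachment point on the second lobe). So $\omega_{2,k}$ is not pulled back from the base and cannot simply be extracted from $(p_1)_!$ by \eqref{pushpull}. This is precisely the difficulty that the passage to the affine presentation is designed to circumvent: relative to $x\in U_2$ at infinity, all dependence on the node in the surviving monomials is rewritten as dependence on points of $U_2$, after which the push--pull formula applies legitimately. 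I would recommend reorganising your second and third steps along these lines rather than through the cyclic generators and Lemma \ref{Kont-integral-properties}(iii).
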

\begin{proof}
  Consider performing the fibrewise integration by first integrating along
  the fibres of $\pi^{U_1 \cap V_{int}}|_{\partial_{U_1,U_2}}$ and then integrating out the
  remaining internal vertices.  Furthermore, consider performing the
  integral over the fibres of $\pi^{U_1 \cap
    V_{int}}|_{\partial_{U_1,U_2}}$ by first integrating
  out the tangent ray decorations of the points in $U_1 \cap V_{int}$ and then
  integrating out the positions of these points:
  \[
  (\pi^{U_1 \cap
    V_{int}}|_{\partial_{U_1,U_2}})_! = (\pi_{\mathrm{pos}})_! \circ
  (\pi_{\mathrm{dec}})_!
  \]
  Since iterated integration of $PA$ forms is not defined in general,
  we must justify as above both of these iterated integrations.  We will see
  that each successive integration results again in a minimal form, so
  the iteration is allowed.

  Relative to a choice of a point $x \in U_2$ as point at infinity,
  the form $\I(\widetilde{\gamma})$ can be expanded in terms of the
  affine presentation generators as in the proof of Lemma
  \ref{Kont-integral-properties} (iii),(iv).  Integrating out the
  $V_{int} \cap U_1$ decorations then yields a polynomial $P =
  (\pi_{\mathrm{dec}})_!\I(\widetilde{\gamma})$ in $\omega_{ij}$ forms
  and $\eta_k$ forms for $k\notin V_{int} \cap U_1$, and hence a
  minimal form.  On the locus $\partial_{U_1,U_2}$, let $q$ denote the
  node between the two lobes.  On this locus, if $a\in U_1$ and $b\in
  U_2$ then $\omega_{ab} = \omega_{qb}$.  Hence each monomial in $P$
  factors as a product $\beta_2 \wedge \beta_1$, where $\beta_2$ is a
  monomial of $\omega$ and $\eta$ forms on the second lobe (relative to $x$ and
  including $q$ as a marked point) and $\beta_1$ is a monomial of
  only $\omega$ forms on the first lobe (relative to $q$).  Now,
  integrating out the $V_{int} \cap U_1$ positions,
\[
(\pi_{\mathrm{pos}})_! (\beta_2 \beta_1) = \beta_2 \cdot
(\pi_{\mathrm{pos}})_!(\beta_1)
\]
and if $|U_1| \geq 3$ then $(\pi_{\mathrm{pos}})_!(\beta_1) = 0$ by
Kontsevich's Vanishing Lemma \cite[Lemma 6.4]{Ko1}.
\end{proof}

Thus the only fibrewise boundary components which contribute are those
for which $|U_1| = 2$.  If $U_1=\{u,v\}$ then we will write
$\partial_{uv} = \partial_{U_1,U_2}$.  We will now compute the Stokes
boundary term
$(\pi^{V_{int}}|_{\partial_{uv}})_!\I(\widetilde{\gamma})$.
Factor $\pi^{V_{int}}|_{\partial_{uv}}$ as 
\[
  \partial_{uv}
  \xrightarrow{\pi^{\{v\}}|_{\partial_{uv}}} 
  \M_{V \sqcup V_{int} \smallsetminus \{v\}}
  \xrightarrow{\pi^{V_{int} \smallsetminus \{v\}}}
  \M_{V}.
\]
The first map is given its standard fibrewise orientation and 
the second map is given the fibrewise orientation $\iota_x r$, where 
$r \subset \det(V_{int})$ is the fibrewise orientation of
$\pi^{V_{int}}$.  Note that with these choices of orientation,
\[
\pi^{V_{int}\smallsetminus \{v\}}_! \circ \pi^{\{v\}}_! =
(-1)^{|V_{int}|-1}\pi^{V_{int}}_!.
\]
The Stokes formula \eqref{stokes-graph-forms} then becomes
\begin{equation}\label{stokes-graph-forms-2}
  d\I(\gamma) = \frac{(-1)^{|E|-1}}{2^{|V_{int}|}}\sum_{\{u,v\}} \pi^{V_{int} \smallsetminus \{v\}}_!
  (\pi^{\{v\}}|_{\partial_{uv}})_! \I(\widetilde{\gamma}),
\end{equation}
where the sum runs over unordered pairs $\{u,v\} \subset V \sqcup
V_{int}$ at most one of which is external (if one is internal then we
call it $v$, and if both are internal then we choose one to call $v$).

\begin{lemma}\label{key-relation}
Suppose we are given four distinct vertices $u, v, w, x$.  Then on
$\partial_{u v}$,
\[
\alpha_{u w} \alpha_{u x} = (1/2) (\alpha_{v w} - \alpha_{v x})(\alpha_{u w} + \alpha_{u x}).
\]
\end{lemma}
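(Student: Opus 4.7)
The plan is to reduce the claimed identity to the single observation that, on $\partial_{uv}$, the 1-form $\alpha_{uw} - \alpha_{vw}$ is independent of $w$ (for any $w \in V \smallsetminus \{u,v\}$); once this key fact is in hand, the lemma follows from a short algebraic manipulation.

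First I would establish the key claim. The cleanest route is to invoke the form-level identity $\alpha_{ij} = \eta_i + \eta_j - 2\omega_{ij}$ derived in Section 3, pulled back from $\M_3$ along the forgetful map, using a basepoint $0 \notin \{u,v,w,x\}$ (if $|V|=4$, one first passes to $\M_{V \sqcup \{0\}}$, establishes the identity there, and pulls back along a section). On $\partial_{uv}$, the identity $\omega_{aw} = \omega_{qw}$ for $a \in \{u,v\}$ (with $q$ the separating node and $0,w$ on the main lobe) was already exploited in the proof of the vanishing of the $|U_1|\geq 3$ boundary integrals. Subtracting gives
\[
\alpha_{uw}-\alpha_{vw} = \eta_u - \eta_v \quad \text{on } \partial_{uv},
\]
which is visibly independent of $w$. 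Equivalently (and basepoint-free), one can verify directly that on $\partial_{uv}$ the form $\alpha_{uw}|_{\partial_{uv}}$ decomposes as a sum of an ``intrinsic bubble-side angle'' between the tangent ray at $u$ and the node, plus the node decoration, plus a ``main-lobe angle'' between the node and $w$; the latter two contributions cancel in $\alpha_{uw}-\alpha_{vw}$.

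Given the key claim, setting $w \to x$ yields $\alpha_{uw}-\alpha_{vw} = \alpha_{ux}-\alpha_{vx}$, which rearranges to $\alpha_{vw}-\alpha_{vx} = \alpha_{uw}-\alpha_{ux}$ on $\partial_{uv}$. Substituting into the right hand side of the claim, and using that $\alpha_{uw}^2 = \alpha_{ux}^2 = 0$ together with the anticommutativity of $1$-forms,
\[
\tfrac{1}{2}(\alpha_{vw}-\alpha_{vx})(\alpha_{uw}+\alpha_{ux}) = \tfrac{1}{2}(\alpha_{uw}-\alpha_{ux})(\alpha_{uw}+\alpha_{ux}) = \tfrac{1}{2}\bigl(\alpha_{uw}\alpha_{ux} - \alpha_{ux}\alpha_{uw}\bigr) = \alpha_{uw}\alpha_{ux}.
\]

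The main obstacle is verifying the key claim cleanly: the decomposition $\alpha_{uw}|_{\partial_{uv}} = (\text{bubble piece}) + (\text{node}) + (\text{main piece})$ requires careful bookkeeping of how tangent rays transport across the nodal degeneration. Invoking $\alpha_{ij} = \eta_i + \eta_j - 2\omega_{ij}$ sidesteps this and reduces the matter to the already-used identity $\omega_{uw} = \omega_{vw}$ on $\partial_{uv}$, which is the only geometric input really needed.
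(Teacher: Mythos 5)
Your proof is correct and follows essentially the same route as the paper: both reduce the identity to the linear relation $\alpha_{uw}-\alpha_{ux}=\alpha_{vw}-\alpha_{vx}$ on $\partial_{uv}$, obtained from $\alpha_{ij}=\eta_i+\eta_j-2\omega_{ij}$ together with $\omega_{uw}=\omega_{vw}$ on that face, and then conclude by anticommutativity of $1$-forms. The only (immaterial) difference is your auxiliary basepoint $0$ and the detour through $\M_{V\sqcup\{0\}}$ when $|V|=4$; the paper avoids this by simply taking $x$ itself as the point at infinity.
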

\begin{proof}
  We can consider $x$ to be at infinity and in the corresponding
  affine presentation, $\omega_{u w} = \omega_{v w}$.  Hence, on
  $\partial_{u v}$ we have the relation $\alpha_{u x} - \alpha_{u w} =
  \alpha_{v x} - \alpha_{v w}$.  The result follows from multiplying
  both sides with $\alpha_{vx} + \alpha_{vw}$ since 1-forms
  anticommute.
\end{proof}

\begin{lemma}\label{volume-forms}
  For any triple of vertices $(u,v,w)$, the 2-form 
  \[
  (1/2)\alpha_{w v} \alpha_{v u}
  \]
  is a fibrewise negative unit volume form for
  $\pi^{\{v\}}|_{\partial_{uv}} \colon \partial_{u v} \to \M_{V \sqcup
    V_{int} \smallsetminus \{v\}}$.
\end{lemma}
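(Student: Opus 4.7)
The strategy is to identify the fibre of $\pi^{\{v\}}|_{\partial_{uv}}$ explicitly as a two-torus and then compute $(1/2)\alpha_{wv}\alpha_{vu}$ restricted to such a fibre in local coordinates.

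First I identify the fibre and the induced fibrewise orientation. The face $\partial_{uv}$ is a circle bundle (the node tensor ray) over $\uM_{\{u,v,q_s\}} \times \uM_{(V \sqcup V_{int}\smallsetminus\{u,v\})\cup\{q_b\}}$ equipped with tangent ray decorations at the points of $V$; since the three-point moduli $\uM_{\{u,v,q_s\}}$ is a single point, the fibre of $\pi^{\{v\}}|_{\partial_{uv}}$ is a two-torus parameterized by the tangent ray angle $\varphi_v$ at $v$ on the small lobe and the node tensor ray angle $\nu$. Writing the position of $v$ near $u$ in the big lobe in polar coordinates $v - u = re^{i\psi}$ (with $\psi$ identified with $\nu$ via the real oriented blowup at the node), the fibrewise orientation form of $\pi^{\{v\}}$ is $dr\wedge d\psi \wedge d\varphi_v$, and applying the outward-normal convention ($-\partial_r$) at $r = 0$ gives the induced fibrewise orientation $d\varphi_v \wedge d\nu$ on $\partial_{uv}$.

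The central computation is to restrict $\alpha_{vu}$ and $\alpha_{wv}$ to such a fibre. Tracking tangent ray decorations through the collapse of the (now unstable) small bubble under $\pi^{\{v\}}$ yields the formula $\varphi_u^{\mathrm{base}} = \varphi_u + \nu$ for the base-fixed tangent ray at $u$, so on the fibre $d\varphi_u = -d\nu$. Since $u, v$ both lie on the small lobe, $\pi_{vu}$ factors through small-lobe forgetting, giving $\alpha_{vu}|_{\partial_{uv}} = d(\varphi_v + \varphi_u)$ in the three-point small-lobe coordinates, which restricts to $\alpha_{vu}|_{\mathrm{fibre}} = d\varphi_v - d\nu$. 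For $w$ on the big lobe, a similar analysis of $\pi_{wv}$ shows that the post-collapse tangent ray angle at $v$ equals $\varphi_v + \nu$, so $\alpha_{wv}|_{\partial_{uv}} = d(\varphi_v + \nu + \varphi_w)$ and $\alpha_{wv}|_{\mathrm{fibre}} = d\varphi_v + d\nu$. Therefore
\[
\tfrac{1}{2}\alpha_{wv}\wedge \alpha_{vu}\big|_{\mathrm{fibre}}
= \tfrac{1}{2}(d\varphi_v + d\nu)\wedge(d\varphi_v - d\nu)
= -d\varphi_v\wedge d\nu,
\]
which is the negative of the unit fibrewise volume form (with $d\theta$ on $\M_2 \cong S^1$ normalized so that $\int_{S^1} d\theta = 1$).

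The main obstacle is rigorously establishing the two tangent-ray tracking formulas, which require careful bookkeeping of how the cyclic operad's tensor-product node ray interacts with the bubble-collapse operation in the forgetful maps. Lemma~\ref{key-relation} provides a useful consistency check: since $\pi_{uw}$ factors through $\pi^{\{v\}}$ whenever $w$ lies on the big lobe, the pullback $\alpha_{uw}$ restricts to zero on each fibre, and then the 1-form relation $\alpha_{uw} - \alpha_{vw} = \alpha_{ux} - \alpha_{vx}$ extracted from the proof of Lemma~\ref{key-relation} forces $\alpha_{wv}|_{\mathrm{fibre}}$ to be the same closed form for every choice of big-lobe vertex $w$, in agreement with the derived formula $d\varphi_v + d\nu$.
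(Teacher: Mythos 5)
Your proof is correct, but it takes a genuinely different route from the paper's. The paper factors $\pi^{\{v\}}$ as $\pi^{\{v\}}_{\mathrm{pos}}\circ\pi^{\{v\}}_{\mathrm{dec}}$ and works in the affine presentation relative to $w$: the form $\eta_v$ is a positive unit volume form for the decoration circle and $\omega_{uv}$ a negative one for the position circle on the face (the latter being the standard fact from the unframed Fulton--MacPherson picture), so $\omega_{uv}\eta_v$ is a fibrewise negative unit volume form by the conventions of section \ref{orientations}; it then rewrites $\omega_{uv}\eta_v=(1/2)(\alpha_{wu}\alpha_{wv}-\alpha_{uv}\alpha_{wv})$ and discards the first term because $\alpha_{wu}$ is constant along the fibres. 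You instead parameterize the fibre torus explicitly by $(\varphi_v,\nu)$ and restrict the cyclic generators directly; this is more self-contained and exhibits the geometry of the face, but it hinges on the two tangent-ray transport formulas you rightly flag as the delicate point, together with the orientation identification $d\psi=+d\nu$. The transport formulas need not be established by bare-hands bookkeeping: they are precisely Lemma \ref{pullback-lemma} applied to the face inclusion ${}_x\circ_y\colon\M_{\{u,v,x\}}\times\M_{U_2\sqcup\{y\}}\to\M_{V\sqcup V_{int}}$, which gives $\alpha_{wv}\mapsto\alpha_{vx}\otimes 1+1\otimes\alpha_{yw}=d(\varphi_v+\varphi_x)+d(\varphi_y+\varphi_w)$ with $\nu=\varphi_x+\varphi_y$, and $\alpha_{vu}\mapsto\alpha_{vu}\otimes 1$; since the proof of that lemma is independent of the present one, citing it closes your acknowledged gap with no circularity. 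What your route buys is an exact pointwise identity on each fibre (and, as your consistency check via Lemma \ref{key-relation} shows, independence of the choice of $w$); what the paper's route buys is that all of the sign-sensitive boundary geometry is delegated to the already-established unframed statement for $\omega_{uv}$ and to the orientation conventions, so that no new identification of the blowup angle with the node ray is needed.
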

\begin{proof}
  In terms of the affine presentation (relative to $w$), one sees that
  $\omega_{u v} \eta_{v}$ is a fibrewise negative unit volume form for
  $\pi^{\{v\}}|_{\partial_{uv}}\colon \partial_{u v} \to \M_{V\sqcup
    V_{int} \smallsetminus \{v\}}$ as follows.  We factor
  $\pi^{\{v\}}$ as $\pi^{\{v\}}_{\mathrm{pos}} \circ
  \pi^{\{v\}}_{\mathrm{dec}}$, and each of these maps has a fibrewise
  orientation as described in section \ref{orientations}.  The form
  $\eta_v$ is a fibrewise positive unit volume form for
  $\pi^{\{v\}}_{\mathrm{dec}}$.  The form $\omega_{uv}$ is a negative
  fibrewise unit volume form for the restriction of
  $\pi^{\{v\}}_{\mathrm{pos}}$ to the face
  $\pi^{\{v\}}_{\mathrm{dec}}(\partial_{uv}) = \partial_{uv}/S^1
  \subset \M_{V \sqcup V_{int}}/S^1$. Translating to the cyclic
  presentation,
  \begin{align*}
    \omega_{u v} \eta_{v}
    & = (1/2)(\alpha_{w u} + \alpha_{w v} - \alpha_{u v})\alpha_{w v} \\
    & = (1/2)(\alpha_{w u} \alpha_{w v} - \alpha_{u v} \alpha_{w v}).
  \end{align*}
  Since $\alpha_{w u}$ is constant on the fibres of
  $\pi^{\{v\}}|_{\partial_{uv}}$, the first term in the final line
  integrates to zero on each fibre, and so we obtain the result.
\end{proof}

\begin{lemma}\label{boundary-expansion}
Consider a vertex set $W=\{1, \ldots, k, u,v\}$.  On the boundary
component $\partial_{uv} \subset \M_W$ the following identity holds.
\[
\alpha_{1v} \cdots \alpha_{kv} =
 (1/2)^{k-1} \left( \sum_{i=1}^k (-1)^{k+i} 
                    \alpha_{1u} \cdots \widehat{\alpha_{iu}}
                               \cdots \alpha_{ku} \right)
             \left( \alpha_{1v} + 
                    \sum_{j=2}^k 2^{j-2}\alpha_{jv}  \right)
\]
\end{lemma}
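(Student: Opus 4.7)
The plan is to reduce the stated identity to a universal computation in the exterior algebra generated by the forms $\beta_j := \alpha_{jv}$, by exploiting a substitution that is natural on $\partial_{uv}$. The key geometric claim is that on $\partial_{uv}$ the 1-form $\alpha_{ju} - \alpha_{jv}$ does not depend on $j \in \{1,\ldots,k\}$; call this common value $\delta$. To establish the claim, given $j$ I would fix any $j_0 \in \{1,\ldots,k\} \smallsetminus \{j\}$ as the point labelled $0$ in the affine presentation. On $\partial_{uv}$ the marked points $u$ and $v$ lie together on one lobe of the node separating them from $j$, so their directions to $j$ coincide: $\omega_{uj} = \omega_{vj}$. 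Translating via $\alpha_{ab} = \eta_a + \eta_b - 2\omega_{ab}$ then gives $\alpha_{uj} - \alpha_{vj} = \eta_u - \eta_v = \alpha_{j_0 u} - \alpha_{j_0 v}$, and varying $j_0$ shows the right-hand side (hence also the left) is independent of~$j$.

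With $\delta$ in hand, $\alpha_{ju} = \beta_j + \delta$ on $\partial_{uv}$, and the right-hand side of the lemma becomes $(1/2)^{k-1} A\cdot B$ with
\[
A := \sum_{i=1}^k (-1)^{k+i}\prod_{j\neq i}(\beta_j + \delta), \qquad B := \beta_1 + \sum_{j=2}^k 2^{j-2}\beta_j.
\]
Expanding $A$, all terms of $\delta$-degree $\geq 2$ vanish because $\delta \wedge \delta = 0$. The substantive step is to show that the $\delta$-linear part of $A$ also vanishes: for each unordered pair $\{a,b\} \subset \{1,\ldots,k\}$ with $a<b$, the coefficient of $\delta \cdot \prod_{j \notin \{a,b\}}\beta_j$ in $A$ receives contributions only from the index pairs $(i,l) = (a,b)$ and $(b,a)$ (where $l$ is the position at which $\delta$ is substituted for $\beta_l$ in the $i$-th summand), and these two contributions are negatives of each other.

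Hence $A$ reduces to $\sum_{i=1}^k (-1)^{k+i}\beta_1\cdots\widehat{\beta_i}\cdots\beta_k$, and the product $A\cdot B$ is then immediate: $\beta_1\cdots\widehat{\beta_i}\cdots\beta_k \cdot \beta_j$ vanishes for $j \neq i$, while for $j = i$ reordering to canonical form introduces $(-1)^{k-i}$, which cancels the $(-1)^{k+i}$. Thus $A\cdot B = (\sum_{i=1}^k c_i)\,\beta_1\cdots\beta_k$, where $c_i$ is the coefficient of $\beta_i$ in $B$, and the arithmetic $1 + 1 + 2 + 4 + \cdots + 2^{k-2} = 2^{k-1}$ exactly absorbs the prefactor $(1/2)^{k-1}$, yielding $\beta_1\cdots\beta_k = \alpha_{1v}\cdots\alpha_{kv}$ on the nose. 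The main obstacle is the sign bookkeeping in the $\delta$-linear cancellation: this is purely combinatorial but delicate, because the position at which $\delta$ appears after being pulled to the front of the wedge depends on whether $l < i$ or $l > i$, giving parities that differ by a sign precisely in the right way to force cancellation.
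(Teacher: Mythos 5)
Your proof is correct. It rests on the same geometric input as the paper's proof, namely that on $\partial_{uv}$ the difference $\alpha_{ju}-\alpha_{jv}$ is independent of $j$ (this is exactly the linear identity $\alpha_{ux}-\alpha_{uw}=\alpha_{vx}-\alpha_{vw}$ extracted in the proof of Lemma \ref{key-relation}), but the combinatorial execution is different: the paper proceeds by induction on $k$, repeatedly applying the quadratic form of that relation (Lemma \ref{key-relation}) to peel off one factor at a time, which is how the cascading weights $2^{j-2}$ arise, whereas you substitute $\alpha_{ju}=\alpha_{jv}+\delta$ once and verify the identity in closed form in the exterior algebra. Your three steps all check out: the $\delta$-quadratic terms vanish since $\delta\wedge\delta=0$; the $\delta$-linear terms cancel in pairs indexed by $\{a,b\}$, the two contributions carrying signs $(-1)^{k+a}(-1)^{b-2}$ and $(-1)^{k+b}(-1)^{a-1}$ after $\delta$ is moved to the front, which are indeed opposite; and the surviving alternating sum pairs off against the second factor to give the total coefficient $1+\sum_{j=2}^{k}2^{j-2}=2^{k-1}$, absorbing the prefactor. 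Your route trades the induction for a one-shot sign computation and has the merit of making the origin of both $(1/2)^{k-1}$ and the weights $2^{j-2}$ transparent (indeed it shows the second factor could be replaced by any combination $\sum c_j\alpha_{jv}$ with $\sum c_j=2^{k-1}$), at the cost of the pairwise cancellation bookkeeping, which you have set up correctly. One minor remark: your derivation of the independence of $\delta$ requires a second external label $j_0\neq j$ to serve as the point at infinity, which exists whenever $k\geq 2$; for $k=1$ the lemma is trivially true and $\delta$ is not needed, so nothing is lost.
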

\begin{proof}
Use induction on $k$ and the relation \eqref{key-relation}.
\end{proof}

\begin{proof}[Proof of Theorem \ref{differential-of-graph}]
  Suppose $\gamma$ is a projective graph.  Write $\gamma =
  \pi^{V_{int}}_r (\widetilde{\gamma})$.  Given an edge $e$ between
  vertices $u,v$ with $v$ internal (if $u$ is internal as well then we
  orient $e$ with head at $v$) and a second edge $f$ meeting
  $v$,
\[
\gamma \oslash (e,f) = (-1)^{|E|} \pi^{V_{int} \smallsetminus \{v\}}_{\iota_v r} (\widetilde{\gamma}
\oslash (e,f)),
\]
where the sign comes from sliding $v$ to the left past the edges $E
\smallsetminus \{f,e\}$.  By Lemmas \ref{boundary-expansion} and
\ref{volume-forms},
\[
(\pi^{\{v\}}|_{\partial_{uv}})_! \I(\widetilde{\gamma}) = 
  (-2) \sum_{f \in E(v) \smallsetminus \{e\}}
    \I(\widetilde{\gamma} \oslash (e,f)).
\]
By combining these we find that
\begin{align*}
\sum_f \I(\gamma\oslash (e,f)) = 
   & \left(\frac{(-1)^{|E|-1}}{2^{|V_{int}|-1}}\right) \sum_f 
     \pi^{V_{int} \smallsetminus \{v\}}_! \I(\widetilde{\gamma} \oslash (e,f)) \\ 
 = & \left(\frac{(-1)^{|E|-1}}{2^{|V_{int}|}} \right) \pi^{V_{int} \smallsetminus \{v\}}_!
     (\pi^{\{v\}}|_{\partial_{uv}})_! \I(\widetilde{\gamma}). 
\end{align*}
Summing this over all unordered pairs $\{u,v\}$ and using
\eqref{stokes-graph-forms-2} then gives the desired result.
\end{proof}

\subsection{Example: resolving the cyclic Arnold relation}
As an illustration of Theorem \ref{differential-of-graph}, let us
consider the product $(1/2)\alpha_{v a} \alpha_{v b} \alpha_{v c}
\alpha_{v d}$ corresponding to the graph $\gamma$ shown below,
\begin{center}
\input{plus-graph.pst}
\end{center}
with internal vertex $v$ and projective orientation $(e_{av} \wedge
e_{bv} \wedge e_{cv} \wedge e_{dv} \wedge v)$.  We can express
$(\pi^{\{v\}}|_{\partial_{dv}})_!  \I(\widetilde{\gamma})$ as a
linear combination of edge forms as follows.  On $\partial_{dv}$,
\begin{align*}
\alpha_{v a} \alpha_{v b} \alpha_{v c} \alpha_{d v}
& = (1/2)(\alpha_{d a} - \alpha_{d b})
             (\alpha_{v a} + \alpha_{v b})\alpha_{v c} \alpha_{d v}\\
& = (1/4)(\alpha_{d a} - \alpha_{d b})(\alpha_{d a} - \alpha_{d c})
             (\alpha_{v a} + \alpha_{v c}) \alpha_{d v}\\
& \:\:\:\:   + (1/4)
       (\alpha_{d a} - \alpha_{d b})
       (\alpha_{d b} - \alpha_{d c})
       (\alpha_{v b} + \alpha_{v c})\alpha_{d v} \\
& = (1/4)(\alpha_{da} \alpha_{db} - \alpha_{da} \alpha_{dc} + \alpha_{db} \alpha_{dc})
              (\alpha_{va}\alpha_{dv} + \alpha_{vb}\alpha_{dv} + 2\alpha_{vc}\alpha_{dv})
\end{align*}
By Lemma \ref{volume-forms}, 
\[
(\pi^{\{v\}}|_{\partial_{dv}})_! ((1/2) \alpha_{v a} \alpha_{v b} \alpha_{v c} \alpha_{v d}) = 
-(\alpha_{da} \alpha_{db} - \alpha_{da} \alpha_{dc} + \alpha_{db} \alpha_{dc}).
\]
This corresponds to the sum of graphs
\begin{center}
\input{plus-graph-differential.pst}
\end{center}
where each is given the orientation induced from that of $\gamma$ by
first deleting an edge and then contracting the $\{d,v\}$ edge.  From
this one finds that 
\[
d(\mbox{plus graph}) = (\mbox{cyclic Arnold relation}).
\]

\section{Compatibility of the Kontsevich integral with the cyclic
  operad structure}

The $PA$-forms on $\M$ do not constitute a cyclic cooperad because the
exterior product of forms, $\Omega_{PA}^*(X) \otimes \Omega_{PA}^*(Y)
\to \Omega_{PA}^*(X\times Y)$ is invertible up to quasi-isomorphism
but not on the nose.  However, Lemma \ref{cooperad-diagram} below
shows compatibility between the cyclic operad structure on $\M$ and
the cyclic cooperad structure on $\proj$.

\begin{lemma}\label{pullback-lemma}
  For $u,v \in (I\smallsetminus\{i\}) \sqcup (J \smallsetminus
  \{j\})$, the pullback of $\alpha_{uv}$ via $_i\circ_j\colon \M_{I} \times
  \M_{J} \to \M_{I\sqcup J \smallsetminus \{i,j\}}$ is given by:
\[
{(_i\circ_j)^*} \alpha_{uv} = 
\begin{cases} 
\alpha_{uv} \otimes 1 & \mbox{if $u,v \in I$,} \\
1\otimes \alpha_{uv} & \mbox{if $u,v \in J$,} \\
\alpha_{ui}\otimes 1 + 1 \otimes \alpha_{jv} & \mbox{if $u\in I$ and $v \in J$.}
\end{cases}
\]
\end{lemma}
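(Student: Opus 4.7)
The plan is to use naturality of the forms $\alpha_{uv}$ with respect to the forgetful maps $\pi^A$, combined with the cyclic operad axioms, to reduce each case to a small tractable model. Recall that $\alpha_{uv}$ on $\M_W$ is by definition the pullback of $d\theta$ on $\M_{\{u,v\}}\cong S^1$ via $\pi_{uv}$, and that forgetting a marked point is iterated cyclic composition with $*\in\M_1$ (as noted at the end of section \ref{canonical-projections}); in particular, forgetful maps commute with cyclic composition on either factor.

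For the first two cases, say $u,v\in I\smallsetminus\{i\}$, I would apply the forgetful map $\pi^{J\smallsetminus\{j\}}$ on the right factor; being iterated composition with $*\in\M_1$, it commutes with $_i\circ_j$ by cyclic associativity, and after forgetting, the $J$-side collapses to $\M_{\{j\}}=\ast$. The map $_i\circ_j$ applied to $\M_I\times\ast$ simply removes the point $i$. Continuing to forget the points of $I\smallsetminus\{i,u,v\}$ on the left factor shows that
\[
\pi_{uv}\circ({_i\circ_j}) = \pi_{uv}\circ\mathrm{pr}_1 \colon \M_I\times\M_J\to\M_{\{u,v\}},
\]
and pulling back $d\theta$ yields $\alpha_{uv}\otimes 1$. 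The case $u,v\in J$ is symmetric.

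For the mixed case $u\in I\smallsetminus\{i\}$, $v\in J\smallsetminus\{j\}$, the same naturality reduction — forgetting everything on the left except $\{u,i\}$ and everything on the right except $\{j,v\}$ — shows that $\pi_{uv}\circ({_i\circ_j})$ factors through the composition
\[
{_i\circ_j}\colon \M_{\{u,i\}}\times\M_{\{j,v\}}\to\M_{\{u,v\}},
\]
which is a map $S^1\times S^1\to S^1$. The claim then reduces to showing that, with our orientation conventions, this map is the standard group multiplication, so that $({_i\circ_j})^*d\theta = d\theta_1+d\theta_2$ and hence the pullback of $\alpha_{uv}$ equals $\alpha_{ui}\otimes 1 + 1\otimes\alpha_{jv}$.

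The main obstacle is therefore identifying the cyclic composition $\M_2\times\M_2\to\M_2$ with the $S^1$-multiplication. Geometrically this is very plausible: gluing two spheres at $i,j$ and encoding the node decoration as $r_i\otimes r_j$ transports the tangent framing across the join, so that the angle between the tangent rays at $u$ and $v$ on the glued surface decomposes as the angle between the rays at $u,i$ plus that between $j,v$. To make this rigorous I would work through the isomorphism $\M_{n+1}\cong fFM_2(n)$ of Proposition \ref{operad-equivalence}, in which the composition is gluing of framed configurations and additivity of angles is transparent; alternatively one can verify the formula directly on $\M_3\cong(S^1)^3$ using the explicit coordinates given before Theorem \ref{cyclic-pres}, and then invoke naturality to deduce the general case.
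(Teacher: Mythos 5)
Your proposal is correct and follows essentially the same route as the paper: both arguments use naturality of the forgetful projections to reduce the first two cases to the factorisation $\pi_{uv}\circ({_i\circ_j})=\pi_{uv}\circ\mathrm{pr}_1$, and reduce the mixed case to identifying ${_i\circ_j}\colon\M_{\{u,i\}}\times\M_{\{j,v\}}\to\M_{\{u,v\}}$ with the multiplication $S^1\times S^1\to S^1$, whose pullback of $d\theta$ is $d\theta\otimes 1+1\otimes d\theta$. The only difference is that you flag the identification of this composition with the circle multiplication as a point needing justification, while the paper simply asserts it after choosing an identification $\M_2\cong S^1$.
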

\begin{proof}
  If $u$ and $v$ are both in $I$ then commutativity of the diagram
  \[
  \begin{diagram}
    \node{\M_I \times \M_J}
    \arrow{e,t}{_i\circ_j} \arrow{s}
    \node{\M_{I\sqcup J\smallsetminus \{i,j\}}} \arrow{s,r}{\pi_{\{u,v\}}} \\
    \node{\M_I \times \{pt\}} \arrow{e,t}{\pi_{\{u,v\}}} \node{\M_{\{u,v\}}}
  \end{diagram}
  \]
  implies the formula in this case; the reasoning is the same when
  $u,v \in J$.  Suppose now that $u\in I \smallsetminus \{i\}$ and $v
  \in J\smallsetminus \{j\}$.
  Choosing an identification $\M_2 \cong S^1$, the map
  \[
  _i\circ_j \colon \M_{\{u,i\}}\times \M_{\{j,v\}} \to \M_{\{u,v\}}
  \]
  corresponds with the product map $\mu\colon S^1 \times S^1 \to S^1$, and
  $\mu^* d\theta = d\theta \otimes 1 + 1\otimes
  d\theta$.  Commutativity of the diagram
  \[
  \begin{diagram}
    \node{\M_{I} \times \M_{J}}
    \arrow{e,t}{_i\circ_j} \arrow{s,l}{\pi_{\{u,i\}} \times \pi_{\{j,v\}}}
    \node{\M_{I\sqcup J}} \arrow{s,r}{\pi_{\{u,v\}}} \\
    \node{\M_{\{u,i\}} \times \M_{\{j,v\}}} \arrow{e,t}{_i\circ_j}
    \node{\M_{\{u,v\}}}
  \end{diagram}
  \]
  then implies the formula.
\end{proof}

\begin{lemma}\label{cooperad-diagram}
  (c.f. \cite[Proposition 8.19]{LambVol}.)  The following diagram
  commutes:
\[
\begin{diagram}
  \node{\proj_{I\sqcup J}} \arrow{s,l}{\I} \arrow[2]{e,t}{_I\bullet_J} 
\node[2]{\proj_{I \sqcup \{x\}} \otimes \proj_{J\sqcup\{y\}}}
  \arrow{s,r}{\I\otimes \I} \\
  \node{ \Omega_{PA}^*(\M_{I\sqcup J})}
  \arrow{se,t}{({_x\circ_y})^*}
  \node[2]{\Omega_{PA}^*(\M_{I\sqcup\{x\}}) \otimes
    \Omega_{PA}^*(\M_{J\sqcup\{y\}}).} \arrow{sw} \\
  \node[2]{\Omega_{PA}^*(\M_{I\sqcup \{x\}} \times \M_{J\sqcup \{y\}})}
\end{diagram}
\]
\end{lemma}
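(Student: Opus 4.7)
My plan is to first establish the lemma for graphs with no internal vertices, and then extend to general graphs via a base-change argument for fibrewise integration. When $\gamma \in \proj^0_{I\sqcup J}$ has no internal vertices, $\I(\gamma)$ is simply the product of the edge forms $\alpha_{uv}$, one per edge. Lemma \ref{pullback-lemma} computes $(_x\circ_y)^*\alpha_{uv}$ in each of the three cases ($u,v\in I$; $u,v\in J$; $u\in I, v\in J$), and these expressions agree term by term with the formula defining the co-composition $_I\bullet_J$ on single-edge graphs. Since $\I$ and $_I\bullet_J$ are both multiplicative (with respect to gluing at external vertices on the graph side and the external product of forms on the form side), the diagram commutes for all $\gamma\in\proj^0_{I\sqcup J}$.

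For a general graph $\gamma = \pi^{V_{int}}_r(\widetilde{\gamma})$, we have $\I(\gamma) = \frac{1}{2^{|V_{int}|}}\pi^{V_{int}}_!\I(\widetilde{\gamma})$, and the plan is to compute $(_x\circ_y)^*\I(\gamma)$ by a combination of base change and decomposition. The operad composition $_x\circ_y\colon \M_{I\sqcup\{x\}}\times \M_{J\sqcup\{y\}}\to \M_{I\sqcup J}$ lands in a trivial circle bundle over the boundary face associated with the partition $I\sqcup J$, and the generic locus of the pullback bundle $(_x\circ_y)^*\pi^{V_{int}}$ decomposes as a disjoint union of $2^{|V_{int}|}$ open pieces indexed by partitions $V_{int} = A\sqcup B$ recording to which lobe each internal vertex is assigned. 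The complement is a union of strata of positive fibrewise codimension (where internal vertices collide with the node or create further sub-nodes) and contributes zero to any fibrewise integral. On the piece indexed by $(A,B)$ the bundle is isomorphic to $\pi^A\times \pi^B$ (the node decoration factor is already accounted for in the edge pullback formula from Lemma \ref{pullback-lemma}), and Proposition \ref{twosteps} lets us compute the fibrewise integral as an iterated integral over the two sides. Applying the base case to $(_x\circ_y)^*\I(\widetilde{\gamma})$ on each piece, we obtain a sum over all $(I,J)$-splittings $\tau$ of $\widetilde{\gamma}$ of external products $\I(\widetilde{\gamma}^\tau_L)\wedge \I(\widetilde{\gamma}^\tau_R)$, which by the splitting-sum description of the co-composition is exactly the image of $\gamma$ under the other path in the diagram.

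The main obstacle is the careful bookkeeping of the base change. One must verify that the fibrewise orientation ray $r\in\det(V_{int})$ decomposes correctly as $r = s\wedge t$ with $s\in\det(A)$ and $t\in\det(B)$, so that the fibrewise orientations on $\pi^A$ and $\pi^B$ produced by the conventions of Section \ref{orientations} agree with those induced by restricting $r$ to the two lobes; the normalising constant $\frac{1}{2^{|V_{int}|}}$ then splits compatibly as $\frac{1}{2^{|A|}}\cdot\frac{1}{2^{|B|}}$ automatically. A further delicate point is that Proposition \ref{twosteps} requires intermediate forms to be minimal at each stage of the iterated integration; this holds because the fibrewise integrals appearing are of the same shape as those already used in the definition of $\I$, whose outputs are $PA$ forms built by pushforward of minimal forms. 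A final sign check (tracking the reordering of vertices when decomposing the orientation) completes the comparison.
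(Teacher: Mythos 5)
Your proposal is correct and follows essentially the same route as the paper: reduce to the edge-form computation of Lemma \ref{pullback-lemma} for graphs without internal vertices, then handle general graphs by decomposing the preimage of the boundary face $\partial_{I,J}$ under $\pi^{V_{int}}$ into pieces indexed by partitions $V_{int}=A\sqcup B$, performing the fibrewise integration piece by piece (the paper phrases the negligibility of the overlaps as the quotient map $\coprod\partial_{I\sqcup A,J\sqcup B}\to\bigcup\partial_{I\sqcup A,J\sqcup B}$ being a diffeomorphism on interiors and finite-to-one on boundaries, which matches your positive-codimension argument), and matching the result against the splitting-sum description of $_I\bullet_J$ with compatible orientations $r=s\wedge t$ and the factorisation $2^{|V_{int}|}=2^{|A|}\cdot 2^{|B|}$.
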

\begin{proof}
  The composition map $_x\circ_y\colon \M_{I\sqcup \{x\}} \times
  \M_{J\sqcup \{y\}} \to \M_{I\sqcup J}$ is a principal $S^1$-bundle
  over its image $\partial_{I,J}$, where the circle action is by
  rotating the rays at the $x$ and $y$ points in opposite directions.
  Consider a graph $\gamma \in \proj_{I\sqcup J}$ with set of internal
  vertices $V_{int}$.  There are pullback squares
    \[
    \begin{diagram}
      \node{\coprod_{V_{int} = A\sqcup B} \M_{I\sqcup A\sqcup \{x\}}
        \times \M_{J\sqcup B\sqcup \{y\}}} \arrow{e,t}{\coprod {_x\circ_y}}
      \arrow{s,r}{\coprod \pi^A \times \pi^B}
      \node{\coprod_{V_{int} = A\sqcup B} \partial_{I\sqcup A, J\sqcup
          B}}
      \arrow{s} \\
      \node{\M_{I\sqcup\{x\}} \times \M_{J\sqcup \{y\}}} \arrow{e,t}{_x\circ_y}
      \node{\partial_{I,J}}
    \end{diagram}
    \]
    and
    \[
    \begin{diagram}
      \node{\bigcup_{V_{int} = A \sqcup B} \partial_{I\sqcup A,J\sqcup
          B}} \arrow{e,J} \arrow{s} \node{\M_{I\sqcup J \sqcup
          V_{int}}}
      \arrow{s,r}{\pi^{V_{int}}} \\
      \node{\partial_{I,J}} \arrow{e,J} \node{\M_{I\sqcup J}.}
\end{diagram}
\]
On each fibre over $\partial_{I,J}$ the quotient map
$\coprod \partial_{I\sqcup A,J\sqcup B} \to \bigcup \partial_{I\sqcup
  A,J\sqcup B}$ is a diffeomorphism in the interior
   and finite--to--1 on
the boundary.  With this observation and the above pullback squares,
one has 
\begin{align*}
  {(_x\circ_y)^*} \I (\gamma) & = {(_x\circ_y)^*} \left(
    \frac{1}{2^{|V_{int}|}}
    \left(\pi^{V_{int}}\right)_!  \I(\widetilde{\gamma})\right) \\
  & =  \frac{1}{2^{|A|+|B|}} \sum_{A \sqcup
    B=V_{int}} \left(\pi^A\right)_! \otimes \left(\pi^B\right)_!
  \left({({_x\circ_y})^*} \I(\widetilde{\gamma} )\right),
\end{align*}
where we choose compatible fibrewise orientations for $\pi^A$ and
$\pi^B$.  The lemma now follows directly from this, Lemma
\ref{pullback-lemma}, and the definition of the cooperad structure on
$\proj$ from section \ref{cooperad-structure}.
\end{proof}

\subsection{Proof of Theorem \ref{main}}

The projective graph complexes $\{\proj_V\}$ collectively form a
cyclic cooperad in the category of commutative DGAs, and they are
degree-wise finite dimensional.  Hence the graded duals
$\{\proj^\vee_V\}$ form a cyclic operad in the category of
cocommutative differential graded coalgebras.  The projections
$\proj_V \to H^*(\M_V;\R)$ determine a quasi-isomorphism of cyclic
cooperads, so the dual morphisms,
\[
H_*(\M_V;\R) \to \proj_V^\vee
\] 
constitute a quasi-isomorphism of cyclic operads.  Let \[C^{SA}_*(-)\colon
\mbox{Semi-algebraic sets} \to \mbox{Chain complexes}\] denote the
symmetric monoidal functor of semi-algebraic chains with real
coefficients \cite{HLTV}.  This functor is weakly equivalent to the
functor of real singular chains $C^{sing}_*(-)$, so the cyclic operads
$C^{sing}_*(\M;\R)$ and $C^{SA}_*(\M)$ are weakly equivalent.  By
\cite[Prop. 7.3]{HLTV}, if $X$ is compact then the natural evaluation
map
\[
C^{SA}_*(X) \to \Omega^*_{PA}(X)^\vee 
\]
is a quasi-isomorphism.  Although the complexes
$\Omega_{PA}^*(\M_V)^\vee$ do not form a cyclic operad, the compositions
\[
C^{SA}_*(\M_V) \to \Omega^*_{PA}(\M_V)^\vee \xrightarrow{\I^\vee} \proj_V^\vee  
\]
give a quasi-isomorphism of cyclic operads by Lemma
\ref{cooperad-diagram}.  This completes the proof.


\end{document}